\renewcommand{\epsilon}{\varepsilon}
\newcommand{\newsection}[1]
{\subsection{#1}\setcounter{theorem}{0} \setcounter{equation}{0}
\par\noindent}
\newtheorem{theorem}{Theorem}
\newtheorem{lemma}[theorem]{Lemma}
\newtheorem{corr}[theorem]{Corollary}
\newtheorem{proposition}[theorem]{Proposition}
\newtheorem{deff}[theorem]{Definition}
\newcommand{\bth}{\begin{theorem}}
\newcommand{\ble}{\begin{lemma}}
\newcommand{\bcor}{\begin{corr}}
\newcommand{\bdeff}{\begin{deff}}
\newcommand{\bprop}{\begin{proposition}}
\newcommand{\ele}{\end{lemma}}
\newcommand{\ecor}{\end{corr}}
\newcommand{\edeff}{\end{deff}}
\newcommand{\eprop}{\end{proposition}}
\newcommand{\Rn}{{\mathbb R}^n}
\newcommand{\la}{\lambda}
\newcommand{\e}{\varepsilon}
\renewcommand{\Pi}{\varPi}
\renewcommand{\epsilon}{\varepsilon}
\newcommand{\R}{{\mathbb R}}
\newcommand{\1}{{\rm 1\hspace*{-0.4ex}%
\rule{0.1ex}{1.52ex}\hspace*{0.2ex}}}
\newcommand{\ola}{\1_\la}
\newcommand{\ala}{\tilde \1_\la}
\begin{document}

\title[Weyl formulae for Schr\"odinger operators]
{Weyl formulae for Schr\"odinger operators with critically singular potentials}
%
%
%
%
%
%
\keywords{Eigenfunctions, Weyl formula, spectrum}
\subjclass[2010]{58J50, 35P15}

\thanks{The authors were supported in part by the NSF (NSF Grant DMS-1665373), and the second author was also partially supported by
   the Simons Foundation. }

\author[]{Xiaoqi Huang}
\address[X.H.]{Department of Mathematics,  Johns Hopkins University,
Baltimore, MD 21218}
\email{xhuang49@math.jhu.edu}

\author[]{Christopher D. Sogge}
\address[C.D.S.]{Department of Mathematics,  Johns Hopkins University,
Baltimore, MD 21218}
\email{sogge@jhu.edu}

\begin{abstract}We obtain generalizations of classical versions of the Weyl formula involving Schr\"odinger
operators $H_V=-\Delta_g+V(x)$ on compact boundaryless Riemannian manifolds with critically singular
potentials $V$.  In particular, we extend the classical results of Avakumovi\'{c}~\cite{Avakumovic}, Levitan~\cite{Levitan} and H\"ormander~\cite{HSpec} by obtaining $O(\la^{n-1})$ bounds for the error term
in the Weyl formula in the universal case when we assume that $V\in L^1(M)$ with the negative part $V^{-}=\max\{0, -V\}$ belongs to the Kato class,
${\mathcal K}(M)$, which is the minimal assumption to ensure that $H_V$ is essentially self-adjoint and
bounded from below or has favorable heat kernel bounds.  In this case, we can also obtain extensions of the
Duistermaat-Guillemin~\cite{DuistermaatGuillemin} theorem yielding $o(\la^{n-1})$ bounds for the error term under generic
conditions on the geodesic flow, and we can also extend B\'erard's~\cite{Berard} theorem yielding
$O(\la^{n-1}/\log \la)$ error bounds under the assumption that the principal curvatures are non-positive
everywhere.  We can obtain further improvements for tori, which are essentially optimal, if we strengthen
the assumption on the potential to $V\in L^p(M)$ and $V^{-}\in {\mathcal K}(M)$ for appropriate exponents
$p=p_n$.
\end{abstract}

\maketitle
\setcounter{secnumdepth}{3}

\newsection{Introduction}


The purpose of this paper is to prove Weyl formulae for Schr\"odinger operators
\begin{equation}\label{1.1}
H_V=-\Delta_g+V(x)
\end{equation}
on smooth compact $n$-dimensional Riemannian manifolds $(M,g)$.  We shall assume throughout that the potentials
$V$ are real-valued.  Moreover, we shall assume that 
\begin{equation}\label{1.2}
V\in L^1(M),  \,\,\, \text{and} \,\,\, V^{-}=\max\{0, -V\}\in  {\mathcal K}(M),
\end{equation}
where ${\mathcal K}(M)$ denotes the Kato class.  Recall that ${\mathcal K}(M)$ is all $V$ satisfying
\begin{equation}\label{1.3}
\lim_{\delta\to 0}\Bigl(\, \sup_{x\in M} \int_{B(x,\delta)}|V(y)| \, h_n(d_g(x,y)) \, dy\, \Bigr)=0,
\end{equation}
where $d_g$, $dy$ and $B(x,\delta)$ denote geodesic distance, the volume element and the geodesic ball of radius $\delta$ about
$x$ associated with the metric $g$ on $M$, respectively, and
\begin{equation}\label{1.4}
h_n(r)=
\begin{cases}
r^{2-n}, \quad n\ge 3
\\
\log(2+1/r), \quad n=2.
\end{cases}
\end{equation}
Note that our condition on $V$ in \eqref{1.2} is weaker than $V\in {\mathcal K}(M)$ since we only requires the positive part $V^{+}\in L^1$ while 
${\mathcal K}(M)\subset L^1(M)$.

As was shown in \cite{BSS} (see also \cite{SimonSurvey})
the assumption that $V^{-}$ is in the Kato class is needed
to ensure that
the eigenfunctions of $H_V$ are bounded. If $H_V$ has
unbounded eigenfunctions, then its spectral projection
kernels will be unbounded for large enough $\lambda$, and
obtaining spectral bounds in this situation seems far-fetched.  The assumption that $V^{-}\in {\mathcal K}(M)$ ensures that  this is not the case.  

Moreover, if $V$ is as in \eqref{1.2} then the Schr\"odinger operator $H_V$ in \eqref{1.1} is self-adjoint and bounded from below (see e.g. \cite{lorinczi2011feynman},\cite{guneysu2017heat}). Additionally, in this case, since $M$ is compact, the spectrum of $H_V$ is discrete.  Also, (see \cite{SimonSurvey}) the associated
eigenfunctions are continuous.  Assuming, as we may, that
$H_V$ is a positive operator, we shall write the spectrum
of $\sqrt{H_V}$  as
\begin{equation}\label{1.5}
\{\tau_k\}_{k=1}^\infty,
\end{equation}
where the eigenvalues, $\tau_1\le \tau_2\le \cdots$, are arranged in increasing order and we account for multiplicity.  For each $\tau_k$ there is an 
eigenfunction $e_{\tau_k}\in \text{Dom }(H_V)$ (the domain of $H_V$) so that
\begin{equation}\label{1.6}
H_Ve_{\tau_k}=\tau^2_ke_{\tau_k}.
\end{equation} 
We shall always assume that the eigenfunctions are
$L^2$-normalized, i.e.,
$$\int_M |e_{\tau_k}(x)|^2 \, dx=1.$$

After possibly adding a constant to $V$ we may, and shall, assume throughout that $H_V$ is bounded below by one, i.e.,\footnote{See the remark after Theorem~\ref{DG}.}
\begin{equation}\label{1.7}
\|f\|_2^2\le \langle \, H_Vf, \, f\, \rangle, \quad
f\in \text{Dom }(H_V).
\end{equation}
Also, to be consistent, we shall let
\begin{equation}\label{1.8}
H^0=-\Delta_g+1
\end{equation}
be the unperturbed operator also enjoying this lower
bound.  The corresponding eigenvalues and associated $L^2$-normalized
eigenfunctions are denoted by $\{\lambda_j\}_{j=1}^\infty$ and $\{e^0_j\}_{j=1}^\infty$, respectively so
that
\begin{equation}\label{1.9}
H^0e^0_j=\lambda^2_j e^0_j, \quad \text{and }\, \, 
\int_M |e^0_j(x)|^2 \, dx=1.
\end{equation}

Both $\{e_{\tau_k}\}_{k=1}^\infty$ and $\{e^0_j\}_{j=1}^\infty$ are orthonormal bases for $L^2(M)$.  Recall (see e.g. \cite{SoggeHangzhou}) that if $N^0(\la)$ denotes the 
Weyl counting function for $H^0$ then one has the 
``sharp Weyl formula''
\begin{equation}\label{1.10}
N^0(\la)=(2\pi)^{-n} \omega_n \text{Vol}_g(M) \, \la^n
\, +\, O(\la^{n-1}), 
\quad N^0(\la)=\# \{j: \, \la_j\le \la\},
\end{equation}
where $\omega_n$ denotes the volume of the unit ball in
$\Rn$ and $\text{Vol}_g(M)$ denotes the Riemannian volume of $M$.  This result is due to Avakumovi\'{c}~\cite{Avakumovic} and Levitan~\cite{Levitan}, and it was generalized to general self-adjoint elliptic pseudo-differential operators
by H\"ormander~\cite{HSpec}.  The bound in \eqref{1.10}
cannot be improved for the standard round sphere, which accounts for the nomenclature ``sharp Weyl formula''.

The main goal of this paper is to show that this sharp Weyl formula also holds for the operators $H_V$ in
\eqref{1.1} involving critically singular potentials
$V$ as in \eqref{1.2}.  Specifically we have the following.

\begin{theorem}\label{thm1.1}  Let  $V\in L^1(M)$, and $V^{-}\in  {\mathcal K}(M)$, let $H_V$
as above and set
\begin{equation}\label{1.11}
N_V(\la)=\#\{k: \, \tau_k\le \la\}.
\end{equation}
We then have
\begin{equation}\label{1.12}
N_V(\la)=(2\pi)^{-n}\omega_n \mathrm{Vol}_g(M) \, \la^n
+O(\la^{n-1}).
\end{equation}
\end{theorem}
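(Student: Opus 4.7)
The plan is to follow the Avakumovi\'c--Levitan--H\"ormander Tauberian strategy, expanding the perturbed half-wave propagator $e^{-it\sqrt{H_V}}$ around the unperturbed one via a Duhamel (Born) series. Fix a Schwartz function $\rho$ with $\rho\ge 0$, $\rho(0)>0$, and $\hat\rho\in C_0^\infty((-\delta_0,\delta_0))$, where $\delta_0$ is at most the injectivity radius of $(M,g)$ and will be further shrunk in terms of the Kato data of $V$. Applied to the non-decreasing function $\lambda\mapsto E_V(\lambda;x,x):=\sum_{\tau_k\le\lambda}|e_{\tau_k}(x)|^2$, the standard H\"ormander Tauberian theorem turns the conjunction of the pointwise $\lambda$-uniform bound
\[
\sup_{x\in M}\sum_k \rho(\lambda-\tau_k)\,|e_{\tau_k}(x)|^2\le C\lambda^{n-1},\qquad \lambda\ge 1,
\]
together with the asymptotic identification of its main term, into the assertion $E_V(\lambda;x,x)=(2\pi)^{-n}\omega_n\lambda^n+O(\lambda^{n-1})$ with remainder uniform in $x$; integrating over $M$ then yields \eqref{1.12}. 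The uniform boundedness of $\{e_{\tau_k}\}$ needed to make the Tauberian constants $x$-uniform is provided by \cite{BSS} under $V^-\in\mathcal{K}(M)$.

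To establish the smoothed bound I express its left-hand side as $(2\pi)^{-1}\int \hat\rho(t)\,e^{it\lambda}(e^{-it\sqrt{H_V}})(x,x)\,dt$, so only $|t|<\delta_0$ enters. Writing $(\partial_t^2+H^0)\cos(t\sqrt{H_V})=(1-V)\cos(t\sqrt{H_V})$ with initial data $I$ and $0$, Duhamel gives
\[
\cos(t\sqrt{H_V})=\cos(t\sqrt{H^0})+\int_0^t \frac{\sin((t-s)\sqrt{H^0})}{\sqrt{H^0}}\,(1-V)\,\cos(s\sqrt{H_V})\,ds,
\]
with the analogous identity for $\sin(t\sqrt{H_V})/\sqrt{H_V}$; iterating generates a Born series for $e^{-it\sqrt{H_V}}$. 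After substitution and $t$-integration, the zeroth-order term is the unperturbed smoothed spectral function, to which the classical Avakumovi\'c--Levitan--H\"ormander analysis applies, identifying both the leading coefficient $(2\pi)^{-n}\omega_n\lambda^n$ and an $O(\lambda^{n-1})$ remainder. The remaining task is to show that the higher-order Born terms sum to $O(\lambda^{n-1})$ uniformly in $x$.

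Each higher-order term is an iterated integral with $(1-V)$-insertions sandwiched between smoothed wave kernels $\tilde K_\lambda(y,y')=(2\pi)^{-1}\int \hat{\tilde\rho}(t)\,e^{it\lambda}(e^{-it\sqrt{H^0}})(y,y')\,dt$ (and, at the final factor, a kernel involving the perturbed wave propagator, which by finite speed of propagation still acts only between points of distance $\lesssim\delta_0$). On $|t|<\delta_0$ the H\"ormander parametrix for $e^{-it\sqrt{H^0}}$ combined with stationary phase in $t$ should yield a pointwise estimate consistent with $|\tilde K_\lambda(y,y')|\lesssim \lambda^{n-1}(1+\lambda\, d_g(y,y'))^{-(n-1)/2}$ on the support $d_g(y,y')\le\delta_0$. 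Integrating this against $|V(y')|\,dy'$ produces a quantity controlled by $\lambda^{n-1}\cdot\sup_x\int_{B(x,\delta_0)}|V(y)|\,h_n(d_g(x,y))\,dy$, with $h_n$ exactly the weight appearing in \eqref{1.3}--\eqref{1.4}; the Kato hypothesis \eqref{1.2} then lets us shrink $\delta_0$ so that each $V$-insertion costs an arbitrarily small factor, producing geometric convergence of the Born series with total sum $\le C\lambda^{n-1}$. The principal obstacle is to establish the kernel bound on $\tilde K_\lambda$ in precisely this $h_n$-compatible form, and to deal with the portion of $V^+$ lying in $L^1(M)\setminus\mathcal{K}(M)$: this last point is presumably handled by exploiting positivity of $V^+$ (for instance through the variational inequality $H_V\ge H_{-V^-}$) to absorb the non-Kato part of the potential at the level of eigenvalue counting, leaving the Born analysis to apply only to the Kato-class pieces.
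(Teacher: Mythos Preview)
Your plan has the right overall shape (Tauberian argument plus Duhamel comparison), but there are two structural gaps that prevent it from closing under the stated hypotheses.

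First, you aim for a \emph{pointwise} bound $\sup_x \sum_k \rho(\lambda-\tau_k)|e_{\tau_k}(x)|^2 \lesssim \lambda^{n-1}$, from which the integrated Weyl law would follow. But this is strictly stronger than what is being asserted, and in fact need not hold: as the paper notes after the statement, Frank--Sabin showed (for $n=3$) that the pointwise Weyl law can be violated for Kato potentials. The paper works entirely at the \emph{trace} level, proving only $\int_M \chi_\lambda^V(x,x)\,dx = O(\lambda^{n-1})$ (equation~(2.4)) rather than its pointwise analogue, and comparing $\int_M \ala(P_V)(x,x)\,dx$ with $\int_M \ala(P^0)(x,x)\,dx$.

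Second, your Born-series iteration requires each $V$-insertion to cost a small factor, which forces you into a Kato smallness condition on \emph{all} of $V$; you flag this yourself and offer only a ``presumably'' for $V^+ \in L^1\setminus \mathcal K(M)$. The paper avoids iteration entirely by applying Duhamel \emph{once} and keeping the exact perturbed propagator $\cos(sP_V)$ at the end, obtaining (via the trig identity of Lemma~2.1) the closed-form kernel
\[
\bigl(\ala(P_V)-\ala(P^0)\bigr)(x,y)
=\sum_{j,k}\int_M \frac{\ala(\tau_k)-\ala(\lambda_j)}{\tau_k^2-\lambda_j^2}\, e_j^0(x)e_j^0(z)\,V(z)\,e_{\tau_k}(z)e_{\tau_k}(y)\,dz.
\]
The trace of this is then shown to be $O(\|V\|_{L^1}\lambda^{n-3/2})$ (Proposition~2.4) by a case analysis in $\tau_k$ (comparable to $\lambda$, large, small), using an elementary $\delta_\tau$-lemma (Lemma~2.3), cutoff resolvent bounds for $H^0$ (Lemma~2.5), and Gaussian heat-kernel bounds for $e^{-tH_V}$ (Lemma~2.6). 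The Kato hypothesis on $V^-$ enters only through the latter---it guarantees the heat-kernel estimate and hence the dyadic bound $\sum_{\tau_k\in[\lambda,2\lambda)}|e_{\tau_k}(x)|^2\lesssim\lambda^n$---never through a smallness argument for series convergence. This is precisely why $V^+\in L^1$ alone suffices: $V$ appears only as the single multiplier $V(z)$ in the integral above, and the final estimate is linear in $\|V\|_{L^1}$.
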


Note that in the case $n=3$,  shortly after an earlier version of this paper, a local pointwise version of \eqref{1.12} with the same sharp reminder term was established
in \cite{frank2020sharp} by Frank and Sabin, under a slightly stronger condition than Kato class for the potentials. They also showed that the pointwise Weyl law can be 
violated for Kato potentials.

We shall also be able to obtain improved counting
estimates under certain geometric assumptions. 

The first such result is an extension of the Duistermaat-Guillemin theorem \cite{DuistermaatGuillemin}.  Recall the assumption in this theorem is that the set 
${\mathcal C}\subset S^*M$ of all $(x,\xi)$ 
lying on a periodic geodesic in $S^*M$ have 
measure zero (see \cite{SoggeHangzhou}).  Here,
$S^*M$ denotes the unit cotangent bundle of $(M,g)$.  In this case Duistermaat and Guillemin~\cite{DuistermaatGuillemin} showed that one can improve the bounds for the error term in the Weyl law \eqref{1.10} (assuming that 
$V=0$ or $V$ is smooth) to be $o(\la^{n-1})$.
The proof of this relies on H\"ormander's theory of propagation of singularities for smooth pseudo-differential operators.  Even though this theory does not apply to our situation involving very singular potentials, we can extend the theorem of Duistermaat and Guillemin to include the above operators.

\begin{theorem}\label{DG}
Let  $V\in L^1(M)$, and $V^{-}\in  {\mathcal K}(M)$, let $H_V$ be as above, and assume that the set ${\mathcal C}$ of directions of periodic geodesics has measure zero in $S^*M$.  
Then
\begin{equation}\label{1.13}
N_V(\la)=(2\pi)^{-n}\omega_n \mathrm{Vol}_g(M) \, \la^n
+o(\la^{n-1}).
\end{equation}
\end{theorem}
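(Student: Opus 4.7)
The plan is to use the Fourier Tauberian framework of Hörmander, suitably adapted to handle the singular potential. Fix an even nonnegative $\rho\in\mathcal{S}(\mathbb{R})$ with $\rho(0)=1$ and $\hat\rho\in C_0^\infty([-1,1])$, and for $T>0$ let $\rho_T(s)=T\rho(Ts)$, so that $\widehat{\rho_T}(t)=\hat\rho(t/T)$ is supported in $[-T,T]$. Theorem~\ref{thm1.1} supplies the uniform local Weyl bound $N_V(\lambda+1)-N_V(\lambda)=O(\lambda^{n-1})$, and a standard Tauberian lemma then reduces \eqref{1.13} to showing that for every $\varepsilon>0$ there exists $T=T(\varepsilon)$ for which
$$\limsup_{\lambda\to\infty}\lambda^{-(n-1)}\Bigl|(\rho_T*dN_V)(\lambda)-n(2\pi)^{-n}\omega_n\,\mathrm{Vol}_g(M)\,\lambda^{n-1}\Bigr|\le C\varepsilon.$$

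Writing the smoothed density via Fourier inversion,
$$(\rho_T*dN_V)(\lambda) = \frac{1}{2\pi}\int_{-T}^{T}\hat\rho(t/T)\,e^{it\lambda}\,\operatorname{tr}\!\bigl(e^{-it\sqrt{H_V}}\bigr)\,dt,$$
I would split the $t$-integration at a small fixed $\delta>0$ into a short-time and a long-time part. The short-time contribution is handled exactly as in the proof of Theorem~\ref{thm1.1}: a Hadamard parametrix for the free cosine propagator together with a DuHamel absorption of $V$, whose error is controlled by the $L^1$ and Kato norms of the potential, extracts the leading Weyl term plus an error $O_\delta(\lambda^{n-2})$. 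The critical long-time contribution is treated by comparison with the unperturbed operator. I would expand the difference $e^{-it\sqrt{H_V}}-e^{-it\sqrt{H^0}}$ via a Dyson/DuHamel series, then use Gaussian heat-kernel bounds for $e^{-sH^0}$ and for $e^{-sH_V}$ (the latter available from the Kato hypothesis through the Feynman-Kac representation), together with the $L^1$ bound on $V$ and the Kato bound on $V^-$, to obtain quantitative trace-class control of this difference. This reduces the long-time problem to the corresponding statement for $H^0$, which is exactly the classical Duistermaat-Guillemin theorem: under the measure-zero hypothesis on $\mathcal{C}\subset S^*M$, the wave trace of $H^0$ is smooth away from $t=0$ and the isolated periods of closed geodesics, so its weighted oscillatory integral is $o_T(\lambda^{n-1})$ as $\lambda\to\infty$.

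The main obstacle is precisely this long-time perturbation analysis, since the propagator $e^{-it\sqrt{H_V}}$ for $t$ of order one is not amenable to the classical pseudo-differential / propagation-of-singularities calculus when $V$ lies only in $L^1$ with $V^-\in\mathcal{K}(M)$. I expect the resolution to route through the heat semigroup: the difference $e^{-sH_V}-e^{-sH^0}$ enjoys quantitative trace-class smallness controlled by the Kato norm of $V^-$ and $\|V\|_{L^1}$, and functional-calculus identities expressing spectral localizations of $\sqrt{H_V}$ in terms of the heat semigroup let one transfer these bounds back to the wave trace. Ensuring that the resulting error remains $o(\lambda^{n-1})$ uniformly as $T=T(\varepsilon)\to\infty$ is the principal technical challenge.
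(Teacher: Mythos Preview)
Your high-level Tauberian reduction is correct and matches the paper: one must show that for each fixed $T$ the smoothed spectral density of $H_V$ agrees with that of $H^0$ up to $o(\lambda^{n-1})$, and then invoke the classical Duistermaat--Guillemin result for $H^0$. Where your proposal diverges is in the treatment of what you call the ``long-time'' contribution, and this is where the real content lies.

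The paper does \emph{not} split into short and long times, nor does it route through heat-kernel trace-class bounds on the propagator difference. Instead it observes that the Duhamel identity combined with the elementary trigonometric Lemma~\ref{triglemma} gives, for the full time interval $[-T,T]$, the explicit formula
\[
\bigl(\tilde\chi_\lambda(P_V)-\tilde\chi_\lambda(P^0)\bigr)(x,y)
=\sum_{j,k}\int_M\frac{\tilde\chi_\lambda(\lambda_j)-\tilde\chi_\lambda(\tau_k)}{\lambda_j^2-\tau_k^2}\,e_j^0(x)e_j^0(z)V(z)e_{\tau_k}(z)e_{\tau_k}(y)\,dz,
\]
exactly as in \eqref{2.20'}. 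The entire machinery of Section~3 (the $\delta_\tau$--Lemma~\ref{delta}, the Littlewood--Paley decomposition of $K_\tau$, and the cutoff resolvent and heat-kernel bounds of Lemmas~\ref{freeres}--\ref{heat}) then applies verbatim and yields the quantitative estimate
\[
\Bigl|\operatorname{tr}\bigl(\tilde\chi_\lambda(P_V)-\tilde\chi_\lambda(P^0)\bigr)\Bigr|\le C_V\,T^2\,\|V\|_{L^1}\,\lambda^{n-3/2}.
\]
The only new feature is the harmless $T^2$ factor coming from $\partial_\tau[(\tilde\chi_\lambda(\tau)-\tilde\chi_\lambda(\mu))/(\tau-\mu)]=O(T^2)$. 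Since $\lambda^{n-3/2}=o(\lambda^{n-1})$ for any fixed $T$, this is far stronger than needed.

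Your proposed heat-semigroup route for long times is therefore unnecessary, and as you yourself note, making it precise---passing from trace-class smallness of $e^{-sH_V}-e^{-sH^0}$ to $o(\lambda^{n-1})$ control on the wave trace over $|t|\le T$---is genuinely unclear. The paper's point is that the half-power gain $\lambda^{n-3/2}$ already obtained in proving Theorem~\ref{thm1.1} absorbs any polynomial loss in $T$, so no separate long-time argument is required.
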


\medskip

\noindent {\bf Remark.}  Note that, under the hypothesis of Theorem~\ref{DG}, the special case where $V\equiv0$
implies, for instance, the results where $V\equiv c\ge 0$.  
For if $N(\la)$ denotes the number of eigenvalues of $\sqrt{-\Delta_g}$ which are $\le \la$ then, under the hypotheses of Theorem~\ref{DG},  by
 \cite[Theorem 3.5]{DuistermaatGuillemin}, we have $N(\la)=(2\pi)^{-n}\omega_n \mathrm{Vol}_g(M) \la^n+o(\la^{n-1})$.
Based on this we see that the results for $V\equiv0$ imply the results where $V\equiv c\ge 0$ since
$N_{V\equiv c}(\lambda)=N(\sqrt{\lambda^2-c})$ and $(\sqrt{\lambda^2-c})^n=\la^n \cdot(\sqrt{1-c\la^{-2}})^n
=\la^n+O(\la^{n-2})=\la^n+o(\la^{n-1})$.  Moreover, if $V\in C^\infty(M)$ then \eqref{1.13} follows directly from  Duistermaat-Guillemin  \cite[Theorem 3.5]{DuistermaatGuillemin} since $\sqrt{-\Delta_g}$ and $\sqrt{H_V}$ have the same principal symbol
and zero subprincipal symbol.   See \cite[p. 41]{DuistermaatGuillemin} for the latter.  As we shall see in what follows, error terms of order $O(\la^{n-2})$ occur recurrently in our arguments.

\medskip

We also can extend the classical theorem of
B\'erard~\cite{Berard}.

\begin{theorem}\label{berardthm}
Assume that the sectional curvatures of $(M,g)$ are
non-positive.  Then, if $V\in L^1(M)$, and $V^{-}\in  {\mathcal K}(M)$,
\begin{equation}\label{1.14}
N_V(\la)=(2\pi)^{-n}\omega_n \mathrm{Vol}_g(M) \, \la^n
+O(\la^{n-1}/\log \la).
\end{equation}
\end{theorem}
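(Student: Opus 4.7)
The plan is to adapt B\'erard's original argument~\cite{Berard} to the perturbed Schr\"odinger operator $H_V$ by combining it with a perturbative comparison to the unperturbed operator $H^0 = -\Delta_g + 1$. By standard Fourier--Tauberian machinery (as in the proof of Theorem~\ref{thm1.1} but with a much longer time window), the theorem reduces to a sharp smoothed spectral estimate in which the admissible test function $\rho$ has Fourier transform supported in $[-T_\la, T_\la]$ with $T_\la = c_0 \log \la$ and $c_0$ a small positive constant dictated by the curvature bounds. Concretely, for such a $\rho$ one wants to show
$$\sum_k \rho\bigl(T_\la(\la - \tau_k)\bigr) = \frac{n\omega_n}{(2\pi)^n}\,\mathrm{Vol}_g(M)\,\hat\rho(0)\,\frac{\la^{n-1}}{T_\la} + O\Bigl(\frac{\la^{n-1}}{T_\la}\Bigr),$$
from which \eqref{1.14} follows by a standard Tauberian lemma. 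By Fourier inversion this is a statement about the trace integral $\int \hat\rho(t/T_\la)\,e^{it\la}\,\mathrm{tr}\,e^{-it\sqrt{H_V}}\,dt$.

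The comparison to $H^0$ is effected through a Duhamel expansion for $\cos(t\sqrt{H_V})$ in terms of $\cos(t\sqrt{H^0})$: iteration yields a Neumann-type series in which the $j$-th term contains $j$ copies of $V$ sandwiched between $H^0$-propagators. The zeroth term is treated by B\'erard's argument applied directly to $H^0$: passing to the universal cover $\widetilde M$, the Hadamard parametrix together with the Bishop--Gromov comparison theorem for non-positively curved manifolds provides exponential control on the number of deck translates meeting a ball of radius $T_\la$, and stationary phase then yields the desired $O(\la^{n-1}/\log\la)$ bound for this contribution. (The shift from $-\Delta_g$ to $H^0$ only perturbs the main term by $O(\la^{n-2})$, which is absorbed.) For each higher-order term in the Duhamel series, one bounds the insertions of $V$ using the Kato hypothesis on $V^{-}$ and $V^{+}\in L^1$, in the same spirit as the perturbation estimates developed in the proof of Theorem~\ref{thm1.1}; the point is that individually these terms grow at most like a small constant to the $j$-th power times a polynomial in $T_\la$, so that the sum over $j$ contributes $O(\la^{n-1}/\log\la)$ provided $c_0$ is chosen small enough.

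The main obstacle is extending the perturbation estimates, which in Theorems~\ref{thm1.1} and~\ref{DG} are only needed for $|t|$ bounded, to the logarithmically long scale $|t|\le c_0\log\la$. The Kato condition controls $V$ only in an integrated sense against the Newtonian-type kernels $h_n$ appearing in \eqref{1.3}--\eqref{1.4}, so bounding the iterated Duhamel terms requires combining the precise short-time parametrix for the wave kernel of $H^0$ in small normal-coordinate patches (where $V$ acts) with the long-time geometric information provided by non-positive curvature on the universal cover (which organizes the global contributions). Assembling these two ingredients so that they survive up to $\log\la$ insertions of $V$, while still producing a net error of size $\la^{n-1}/\log\la$, is the essential technical task; once it is in place, the stated bound follows from the Tauberian lemma.
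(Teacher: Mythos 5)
There is a genuine gap, and it is precisely the step you yourself flag as the ``essential technical task.'' Your plan rests on an \emph{iterated} Duhamel (Neumann-type) expansion of $\cos(t\sqrt{H_V})$ in free propagators, valid up to times $|t|\le c_0\log\la$, with the claim that the $j$-th term is bounded by a small constant to the $j$-th power times a polynomial in $T_\la$. Nothing in the hypotheses $V\in L^1$, $V^-\in\mathcal K(M)$ produces such per-insertion smallness: each insertion of $V$ costs a factor of size $\|V\|_{L^1}$ (or the Kato norm), the sine propagators $\sin((t-s)P^0)/P^0$ have no decay, and long-time parametrix control of $\log\la$-many insertions of a critically singular potential is exactly what is unavailable. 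Without that, the series does not sum to $O(\la^{n-1}/\log\la)$, and the Tauberian step has nothing to feed on; so as written the argument is incomplete at its central point.

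The paper takes a different and much softer route that avoids iterating Duhamel altogether. One applies Duhamel \emph{once} and, via Lemma~\ref{triglemma}, converts the difference of the smoothed traces of $P_V$ and $P^0$ (with $\rho(t/T)$, $T=\log\la$) into the bilinear spectral sum involving the difference quotient $(\ala(\la_j)-\ala(\tau_k))/(\la_j^2-\tau_k^2)$, exactly as in \eqref{4.15}. This sum is then estimated by the purely spectral machinery of Section~3 (Lemma~\ref{delta}, Lemma~\ref{ala}, the resolvent and heat-kernel bounds), with a loss that is only polynomial in $T$: the bound is $C_VT^2\|V\|_{L^1}\la^{n-3/2}$ as in \eqref{4.8}. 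Since $T=\log\la$, this is far below $\la^{n-1}/\log\la$, so no smallness of $V$ and no control of long-time propagation for $H_V$ is ever needed; B\'erard's theorem is invoked only for the free operator, in the form of \eqref{4.17} and \eqref{4.18}, and the window trace bound \eqref{4.16} for $H_V$ is itself deduced by the same one-step comparison. In short, the decay in $\la$ comes from the frequency localization built into $\ala$ (the difference quotient), not from smallness of the potential; replacing your Neumann series by this single comparison is what closes the gap.
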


In the special case of the torus, we can do much better.

\begin{theorem}\label{torusthm}
Let ${\mathbb T}^n=\Rn/{\mathbb Z}^n$ denote the standard torus with the flat metric, and assume that
$V\in {\mathcal K}(M)$ when $n=2$ and $V\in L^p(M)$, $V^{-}\in{\mathcal K}(M)$ for some $p>\tfrac{2n}{n+2}$
if $n\ge 3$.
Then 
\begin{equation}\label{1.15}
N_V(\la)=(2\pi)^{-n}\omega_n \mathrm{Vol}_g(M) \, \la^n
+O(\la^{n-2+2/(n+1)}).
\end{equation}
Moreover, if  $V\in L^{2}(M)$ and $V^{-}\in \mathcal{K}(M)$, we have for $n\geq 4$, 
\begin{equation}\label{1.16}
N_V(\la)=(2\pi)^{-n}\omega_n \mathrm{Vol}_g(M) \, \la^n
+O(\la^{n-2+\e}), \quad \forall \, \e>0.
\end{equation}
\end{theorem}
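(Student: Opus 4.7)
On the flat torus $\mathbb{T}^n=\mathbb{R}^n/\mathbb{Z}^n$ the eigenvalues of $H^0=-\Delta_g+1$ are exactly $\{4\pi^2|k|^2+1:k\in\mathbb{Z}^n\}$, so the unperturbed Weyl counting function equals the lattice-point count
$$
N^0(\lambda) = \#\{k\in\mathbb{Z}^n:|k|\le\tfrac{1}{2\pi}\sqrt{\lambda^2-1}\}.
$$
The classical lattice-point theorem in Euclidean balls, going back to Hlawka, gives $\#\{k\in\mathbb{Z}^n:|k|\le R\}=\omega_n R^n+O(R^{n-2+2/(n+1)})$ for all $n\ge 2$, whence
$$
N^0(\lambda) = (2\pi)^{-n}\omega_n\mathrm{Vol}_g(\mathbb{T}^n)\lambda^n+O(\lambda^{n-2+2/(n+1)}),
$$
which is \eqref{1.15} for $V\equiv 0$. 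The theorem thus reduces to the comparison estimate
$$
|N_V(\lambda)-N^0(\lambda)| = O(\lambda^{n-2+2/(n+1)})\quad(\text{resp. } O(\lambda^{n-2+\varepsilon})).
$$

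\textbf{Perturbative comparison.} For the comparison I would run the smoothed-trace Tauberian scheme that underlies Theorem \ref{thm1.1}, now with a time window $T=T(\lambda)\to\infty$ chosen large enough that the Tauberian smoothing error is already of the target order. With $\rho\in\mathcal{S}(\mathbb{R})$, $\hat\rho\in C_0^\infty(-1,1)$, $\rho(0)=1$, the task reduces to bounding
$$
\int\hat\rho(t/T)\,e^{it\lambda}\,\mathrm{tr}\!\left[\cos(t\sqrt{H_V})-\cos(t\sqrt{H^0})\right]dt.
$$
Iterated Duhamel expands the propagator difference into a Dyson-type series in $V$ whose $m$-th term is a trace involving $m$ copies of the multiplication operator $V$ interleaved with cosine propagators of $H^0$. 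Each such term is controlled via H\"older together with $L^p\to L^2$-type spectral-cluster bounds for $H^0$ on the torus. The exponent $p>\tfrac{2n}{n+2}$ is the dual Sobolev exponent, chosen precisely so that these multiplication bounds pair with the universal torus spectral-cluster estimates to make the series converge with sum $O(\lambda^{n-2+2/(n+1)})$.

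\textbf{Sharper bound for $V\in L^2$.} For \eqref{1.16} the ordinary spectral-cluster bounds are replaced by Bourgain's sharp $\ell^2$-decoupling / Strichartz estimates on the flat torus, which give $L^q$ bounds for $\mathbf{1}_{[\lambda,\lambda+1]}(\sqrt{H^0})$ at the critical Strichartz exponent with only $\lambda^\varepsilon$ loss. Paired with $V\in L^2$ through H\"older, these sharpen each term in the perturbative series, giving the refined $O(\lambda^{n-2+\varepsilon})$ remainder. The restriction $n\ge 4$ reflects the fact that only then is the relevant Strichartz exponent small enough for the $L^2$ hypothesis on $V$ to close H\"older.

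\textbf{Main obstacle.} The delicate step is summability of the Dyson series uniformly over the large Tauberian window, since every iterated term must improve on the target $\lambda^{n-2+2/(n+1)}$ (or $\lambda^{n-2+\varepsilon}$) and the whole series must remain finite. This is precisely where the Kato hypothesis $V^-\in\mathcal{K}(\mathbb{T}^n)$ is essential: it supplies the Feynman--Kac / heat-kernel bounds that dominate the combinatorial growth of the iterated perturbation terms, so that the sharp lattice-point improvement of the leading-order term survives intact.
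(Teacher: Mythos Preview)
Your outline starts correctly with Hlawka for $N^0$, but the perturbative mechanism you sketch is not the one that works, and it diverges from the paper in two essential ways.

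First, the paper does \emph{not} expand in an iterated Dyson series. A single Duhamel step gives $\cos(tP_V)-\cos(tP^0)=-\int_0^t\frac{\sin((t-s)P^0)}{P^0}\,V\cos(sP_V)\,ds$, with the \emph{perturbed} propagator still on the right. After integrating against the Tauberian multiplier this becomes a double eigenfunction sum involving both $\{e^0_j\}$ and $\{e_{\tau_k}\}$ (formula \eqref{2.20'}), and bounding it requires spectral-cluster control on $H_V$ itself, on shrinking windows $[\lambda,\lambda+\lambda^{-a}]$. There is no convergent power series in $V$ to sum; instead the paper runs a finite \emph{bootstrap} (Proposition~\ref{torus12}, Corollary~\ref{err2}): assuming $\int_M\sum_{\tau_k\in[\lambda,\lambda+\lambda^{-a}]}|e_{\tau_k}|^2\lesssim\lambda^{n-1-b}$, the comparison formula upgrades this to a strictly larger $b$, and after finitely many iterations one reaches $b=a=\tfrac{n-1}{n+1}$. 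The H\"older pairing is $\|V\|_{L^p}$ against $\bigl\|(\sum|e_{\tau_k}|^2)^{1/2}\bigr\|_{L^{p'}}$, the latter interpolated between the $L^2$ trace bound just obtained and the $L^\infty$ heat-kernel bound; $p>\tfrac{2n}{n+2}$ is exactly the threshold making each iteration strictly improve $b$. Your proposed infinite series in $V$, with each term estimated via $H^0$-cluster bounds alone, does not produce this self-improvement and would not close.

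Second, for \eqref{1.16} the paper uses no decoupling or Strichartz estimates whatsoever. The input is the classical improved lattice-point error: $N^0(\lambda)=c_n\lambda^n+O(\lambda^{n-2})$ for $n\ge 5$ (Landau) and $O(\lambda^2(\log\lambda)^{2/3})$ for $n=4$ (Walfisz), the latter absorbed by $\lambda^\varepsilon$. The same bootstrap is then rerun with window width $\lambda^{-1}$ in place of $\lambda^{-a}$ (Proposition~\ref{torus2}, Corollary~\ref{err1}). The restriction $n\ge 4$ reflects only that the lattice-point remainder is not $O(\lambda^{n-2})$ in lower dimensions; it has nothing to do with Strichartz numerology.
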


If $V\equiv 1$, the bounds in \eqref{1.15} are the classical results of Hlwaka~\cite{Hlawka}. The same bounds hold for irrational tori.  Also, with the stronger condition 
on the potential $V$ in the second part of the Theorem, if we use more recent improved bounds for the error term in the Weyl formula for $V\equiv 1$ and related bounds for the trace of certain spectral projection operators,  we obtain the improved bounds in  \eqref{1.16} involving singular potentials.

We mention that the proof of Theorem~\ref{thm1.1}-\ref{torusthm} is based on a perturbative argument which focus on estimating the difference between the counting functions for perturbed and 
unperturbed case, where the differences only contribute to the error terms in the main theorems. As a result, the arguments in this paper should also allow us to obtain improved counting estimates for 
the Schr\"odinger operators $H_V$ under more general geometric assumptions, see, e.g., \cite{volovoy1990improved}, \cite{volovoy1990verification}, as well as more recent works \cite{iosevich2019weyl}, \cite{canzani2020weyl}, for more details.

The authors are grateful to the referee for several suggestions that improved the exposition.

\newsection{Preliminaries and an abstract universal bound}\label{preliminaries}

The purpose of this section is to introduce a abstract proposition that will allow us to prove Theorem 1.1-1.3, provided that we have the analogous counting estimates for the unperturbed operators $H_0=-\Delta_g$. The proof of Theorem~\ref{torusthm}, which is based on a slightly different method that requires stronger conditions on $V$, is given separately in the last section.

Throughout this section we shall assume that, if $N^0(\la)$ denotes the Weyl counting function for $H^0$, we have
\begin{equation}\label{a.1}
N^0(\la)=\int_M\sum_{\la_j\le \la}
|e^0_j(x)|^2 \, dx=(2\pi)^{-n}\omega_n \mathrm{Vol}_g(M) \, \la^n
+O(\e\la^{n-1}),
\end{equation}
where  $\e=\e(\la)$ is a non-increasing function in $\la$ which satisfies $\e(2\la)\ge \frac12\e(\la)$ and $0<\e(\la)\le 1$, $\forall \,\la\ge1$.
The assumption on $\e(\la)$ is a very mild one, which, for instance is satisfied whenever $\e(\la)=\la^{-\sigma}$ with
$0\le \sigma\le 1$.  We make this assumption, since, as we  mentioned before, error terms of the form $O(\la^{n-2})$ arise repeatedly.

The abstract proposition we shall need is the following
\begin{proposition}\label{abstract}
 Let  $V\in L^1(M)$, and $V^{-}\in  {\mathcal K}(M)$, let $H_V$
as above, if $N^0(\la)$ satisfies \eqref{a.1}, then for the same $\e=\e(\la)$ appearing  in \eqref{a.1}, we have 
\begin{equation}\label{a.2}
N_V(\la)=(2\pi)^{-n}\omega_n \mathrm{Vol}_g(M) \, \la^n
+O(\e\la^{n-1}+\e^{-1}\la^{n-\frac32}).
\end{equation}
\end{proposition}

Note that as a consequence of \eqref{1.10}, \eqref{a.1} holds with $\e\equiv 1$, thus Theorem~\ref{thm1.1} follows directly from \eqref{a.2}. Similarly, under the hypothesis of Theorem~\ref{DG}, 
for any fixed constant $T\gg 1$, \eqref{a.1} holds with $\e=1/T$ if $\la\ge\Lambda(T)$. Since $T\la^{n-\frac32}$ is bounded by $1/T\la^{n-1}$ for sufficiently large $\la$, \eqref{a.2} also implies 
\eqref{1.13}. Additionally, under the hypothesis of Theorem~\ref{berardthm}, by the classical theorem of B\'erard~\cite{Berard}, \eqref{a.1} holds with $\e=1/\log\la$, and since $\la^{n-\frac32}\log\la$ is bounded by $\la^{n-1}(\log\la)^{-1}$ for sufficiently large $\la$, Theorem~\ref{berardthm} also follows from the above proposition. On the other hand, if $\e\le\la^{-\frac14}$, the reminder term $\e^{-1}\la^{n-\frac32}$ would be too large compared with $\e\la^{n-1}$, thus \eqref{a.2} is not sufficient to give us the results in Theorem~\ref{torusthm}.

To prove \eqref{a.2}, we first recall that, if as above, $\{e_{\tau_k}\}$ is an orthonormal basis of eigefunctions of $H_V$ then
\begin{equation}\label{2.1}
N_V(\la)=\# \{k: \, \tau_k\le \la\} =
\int_M \sum_{\tau_k\le \la} \, |e_{\tau_k}(x)|^2 \, dx.
\end{equation}
Thus, $N_V(\la)$ is the {\em trace} of the spectral function
\begin{equation}\label{2.2}
E^V_\la(x,y)=\sum_{\tau_k\le \la}e_{\tau_k}(x) e_{\tau_k}(y).
\end{equation}
Here, we are assuming, as we may, that all the eigenfunctions of $H_V$ in our orthonormal basis are 
real-valued.  To simplify the notation, as we may, we
shall assume the same for those of $H^0$, i.e.,
the $\{e_j^0\}$.

We shall need the following lemma which is key to the proof of \eqref{a.2}.

\begin{lemma}\label{smallinterval}
Let $\chi_{\la,\e}^V(x,y)$ be the kernels of the
spectral projection operators
\begin{equation}\label{2.3}
\chi_{\la,\e}^V(x,y)=\sum_{\tau_k\in [\la,\la+\e)}e_{\tau_k}(x)e_{\tau_k}(y)
\end{equation}
for $H_V$, with $\e=\e(\la)$ defined as in \eqref{a.1}. Then given \eqref{a.1}, we have
\begin{equation}\label{2.4}
\int_M \chi_{\la,\e}^V(x,x)\, dx =O(\e\la^{n-1}+\e^{-1}\la^{n-\frac32}), \, \, 
\la \ge 1.
\end{equation}
\end{lemma}

The left side of \eqref{2.4} is essentially equal to the number of eigenvalues for the operator $\sqrt{H_V}$ inside the interval $[\la,\la+\e]$. Note that when $V\equiv 1$, as a consequence of \eqref{a.1}, if
$$\chi_{\la,\e}^0(x,y)=\sum_{\la_j\in [\la,\la+\e)}e_j^0(x)e_j^0(y),$$
denotes the spectral projection operator onto the interval $ [\la,\la+\e)$, we have
\begin{equation}\label{a.3}
\int_M \chi_{\la,\e}^0(x,x)\, dx =O(\e\la^{n-1}), \, \, 
\la \ge 1.
\end{equation}
We shall postpone the proof of Lemma~\ref{smallinterval} to the end of next section, and first see how we can apply it to the proof of \eqref{a.2}.

To make use of the above lemma, we shall follow the classical approach
of rewriting the traces using the wave equation.  To this end, let $P^0=\sqrt{H^0}$ and $P_V=\sqrt{H_V}$ be
the square roots of the two Hamiltonians.  Then since
the Fourier transform of the indicator function
$\1_\la(\tau)$ is $2\tfrac{\sin \la t}t$, we have for $\lambda$ not in the spectrum of $P^0$
\begin{equation}\label{2.5}
N^0(\la)=\frac1\pi \int_M \int_{-\infty}^\infty
\frac{\sin t\la}t \, \bigl(\cos tP^0\bigr)(x,x) \, dt dx,
\end{equation}
if
\begin{equation}\label{2.6}
\bigl(\cos (tP^0)\bigr)(x,y)=\sum_j \cos t\la_j e_j^0(x)
e_j^0(y)
\end{equation}
is the kernel of the solution operator for
$f\to (\cos tP^0)f=u^0(t,x)$, where $u^0$ solves the wave equation
\begin{equation}\label{2.7}
(\partial_t^2+H^0)u^0(x,t)=0, \, \, (x,t)\in M\times \R,
\, \, u^0|_{t=0}=f, \, \, \partial_tu^0|_{t=0}=0.
\end{equation}
Note that \eqref{2.6} is the kernel of a bounded operator on $L^2(M)$, and when we check that \eqref{2.7} is valid, it suffices to do so when $f$ is a finite linear combination of the $\{e_j^0\}$ since such functions are dense in $L^2(M)$.  We shall use similar facts in what follows.   See \cite{SoggeHangzhou} for more details.

Similarly,  for $\lambda$ not in the spectrum of $P_V$
\begin{equation}\label{2.8}
N_V(\la)=\frac1\pi \int_M\int_{-\infty}^\infty
\frac{\sin t\la}t \, \bigl(\cos(tP_V)\bigr)(x,x) \, dt dx,
\end{equation}
if 
\begin{equation}\label{2.9}
\bigl(\cos(tP_V)\bigr)(x,y)=\sum_k \cos t\tau_k \, e_{\tau_k}(x)e_{\tau_k}(y)
\end{equation}
is the kernel of $f\to \cos (tP_V)f=u_V(x,t)$, where 
$u_V$ solve the wave equation
\begin{equation}\label{2.10}
(\partial_t^2+H_V)u_V(x,t)=0, \, \, (x,t)\in M\times \R,
\, \, u_V|_{t=0}=f, \, \, \partial_tu_V|_{t=0}=0.
\end{equation}

To exploit \eqref{a.1} and prove its more general
version \eqref{1.11}, in view of \eqref{2.5}--\eqref{2.10}, it will be useful to relate the kernels in \eqref{2.6} and \eqref{2.9}.  To do so we shall make
use of the following simple calculus lemma.

\begin{lemma}\label{triglemma}  If $\mu\ne \tau$ we have
\begin{equation}\label{2.11}\int_0^t \frac{\sin(t-s)\mu}\mu\,  \cos s\tau \, ds
=\frac{\cos t\tau- \cos t\mu}{\mu^2-\tau^2}.\end{equation}
Similarly, 
\begin{equation}\label{2.12}
\int_0^t \frac{\sin(t-s)\tau}\tau  \, \cos s\tau
\, ds =
\frac{t\sin t\tau}{2\tau}.
\end{equation}
\end{lemma}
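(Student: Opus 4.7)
The plan is entirely elementary: both identities are proved by applying the product-to-sum formula
\[
\sin A\,\cos B=\tfrac12\bigl(\sin(A+B)+\sin(A-B)\bigr)
\]
to the integrand and integrating term by term in $s$. This reduces each integral to a sum of two antiderivatives of sines of affine functions of $s$, which are explicit.

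For \eqref{2.11}, with $A=(t-s)\mu$ and $B=s\tau$, the product-to-sum formula gives
\[
\sin((t-s)\mu)\cos(s\tau)=\tfrac12\bigl(\sin(t\mu+s(\tau-\mu))+\sin(t\mu-s(\tau+\mu))\bigr).
\]
Since $\mu\ne\tau$ (and hence $\tau-\mu$ and $\tau+\mu$ are both nonzero, using that we may assume $\mu,\tau\ge0$ and not simultaneously zero in the intended application, or simply treating them as nonzero scalars), each term is integrated directly in $s$ from $0$ to $t$; the boundary contributions at $s=0$ and $s=t$ produce $\cos(t\mu)$ and $\cos(t\tau)$ terms. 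After dividing by $\mu$, combining the two fractions over the common denominator $\tau^2-\mu^2$, the coefficient $2\mu$ in the numerator cancels the $\mu$ from the original $1/\mu$, and a sign flip gives the claimed form $(\cos t\tau-\cos t\mu)/(\mu^2-\tau^2)$.

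For \eqref{2.12}, one may either pass to the limit $\mu\to\tau$ in \eqref{2.11} via L'H\^opital's rule in the variable $\mu$, or, more directly, repeat the same product-to-sum manipulation with $\mu=\tau$:
\[
\sin((t-s)\tau)\cos(s\tau)=\tfrac12\bigl(\sin(t\tau)+\sin((t-2s)\tau)\bigr).
\]
The first term contributes $\tfrac12 t\sin(t\tau)$ after integration in $s$, while the second integrates to $\tfrac{1}{4\tau}[\cos((t-2s)\tau)]_0^t=\tfrac{1}{4\tau}(\cos(-t\tau)-\cos(t\tau))=0$. Dividing by $\tau$ yields $t\sin(t\tau)/(2\tau)$.

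There is no real obstacle here; the only care needed is the bookkeeping of signs when evaluating the antiderivatives at the endpoints and the algebraic combination of the two fractions in \eqref{2.11} to expose the denominator $\mu^2-\tau^2$. No hypothesis beyond $\mu\ne\tau$ is used for \eqref{2.11}, and \eqref{2.12} is valid for all $\tau\ne 0$ (and extends to $\tau=0$ by continuity, giving $t^2/2$ on both sides, though that case is not needed in the sequel).
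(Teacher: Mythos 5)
Your proof is correct and follows essentially the same route as the paper: the paper's angle-addition manipulation is exactly the product-to-sum identity you invoke, followed by the same explicit integration and combination of fractions, and for \eqref{2.12} (which the paper only says is "similar") your direct computation is the intended one.
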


\begin{proof}
To prove \eqref{2.11} we make use of the identity
$$\sin\bigl(s(\tau-\mu)+t\mu\bigr)=\sin\bigl((t-s)\mu+s\tau)=
\sin((t-s)\mu)\cos s\tau + \cos((t-s)\mu)\sin s\tau,
$$
and, similarly,
$$-\sin\bigl((\tau+\mu)s-t\mu\bigr)=\sin\bigl((t-s)\mu-s\tau\bigr)
=\sin((t-s)\mu)\cos s\tau -\cos((t-s)\mu) \sin s\tau.$$
Thus,
$$\sin\bigl(s(\tau-\mu)+t\mu\bigr)-\sin\bigl((\tau+\mu)s-t\mu\bigr)=2\sin((t-s)\mu)\cos s\tau.$$
Consequently, the left side of \eqref{2.11} equals
\begin{align*}
\frac1{2\mu}\cdot\Biggl[ \, \frac{\cos\bigl(s(\tau-\mu)+t\mu\bigr)}{\mu-\tau}
&+\frac{\cos\bigl(s(\tau+\mu)-t\mu\bigr)}{\mu+\tau}\, \Biggr]^t_0
\\
&=\frac1{2\mu}\Bigl[ \, \cos t\tau \cdot \Bigl(\frac1{\mu-\tau}+\frac1{\mu+\tau}\Bigr)
-\cos t\mu \cdot  \Bigl(\frac1{\mu-\tau}+\frac1{\mu+\tau}\Bigr)\Bigr]
\\
&=\frac1{2\mu}\cdot 
\Bigl(\frac{2\mu \cos t\tau}{\mu^2-\tau^2} - \frac{2\mu \cos t\mu}{\mu^2-\tau^2}\Bigr)
=\frac{\cos t\tau- \cos t\mu}{\mu^2-\tau^2},
\end{align*}
as desired.  

The proof of \eqref{2.12} is similar.
\end{proof}

Let us now describe how we shall use \eqref{a.1} and 
Lemma~\ref{triglemma} to prove the Weyl formula \eqref{a.2}.  If, as above, $\ola(\tau)$ is the indicator function of $[-\la,\la]$, by \eqref{2.1},
proving this amounts to showing that the trace of 
$\ola(P_V)$ satisfies the bounds in \eqref{1.12}.
As is the custom (cf. \cite{SoggeHangzhou}), we shall
do this indirectly by showing that an $\e=\e(\la)$-dependent approximation
$\ala(P_V)$ also enjoys these bounds, and, separately
showing that the difference between the trace of
$\ola(P_V)$ and $\ala(P_V)$ is $O(\e\la^{n-1}+\e^{-1}\la^{n-\frac32})$.

To this end, fix an even real-valued function $\rho\in 
C^\infty(\R)$ satisfying
\begin{equation}\label{2.13}
\rho(t)=1 \, \, \text{on } \, \, 
[-1/2,1/2] \, \, \, \text{and } \, \,
\text{supp } \rho \subset (-1,1).
\end{equation}
We then define
\begin{equation}\label{2.15}
\ala(\tau)=\frac1\pi \int_{-\infty}^\infty
\rho(\e t)\frac{\sin \la t}t \, \cos t\tau \, dt.
\end{equation}
Then since the Fourier tranform of $\ola(\tau)$ is
$2\tfrac{\sin \la t}t$ it is not difficult to see that for $\tau>0$ and large $\lambda$ we have
\begin{equation}\label{2.16}
\ola(\tau)-\ala(\tau) = O\bigl((1+\e^{-1}|\la-\tau|)^{-N}\bigr)
\, \, \, \forall \, N.
\end{equation}
Also, for later use, for $\tau>0$ we have
\begin{equation}\label{2.17}
\bigl(\tfrac{d}{d\tau}\bigr)^j \,  \ala(\tau)
= O\bigl(\e^{-j}(1+\e^{-1}|\la-\tau|)^{-N}\bigr)
\, \, \, \forall \, N, \quad
\text{if } \, j=1,2,3,\dots.
\end{equation}

If we use Lemma~\ref{smallinterval}, we can estimate the difference
between the trace of $\ola(P_V)-\ala(P_V)$.  Indeed,
by \eqref{2.16} we have
\begin{multline}\label{2.18}
\Bigl| \, \int_M \bigl(\ola(P_V)(x,x)-\ala(P_V)(x,x)\bigr) \, dx \, \Bigr|
=\Bigl|\, \int_M \sum_k\bigl(\ola(\tau_k)-\ala(\tau_k)\bigr) \, |e_{\la_j}(x)|^2 \, dx \, \Bigr|
\\
\lesssim \sum_k \int_M\bigl(1+\e^{-1}|\la-\tau_k|)^{-2n} \, |e_{\tau_k}(x)|^2 \, dx
\lesssim \e\la^{n-1}+\e^{-1}\la^{n-\frac32},
\end{multline}
using \eqref{2.4} as well as the condition on $\e=\e(\la)$ in \eqref{a.1} in the last inequality.  Here, and in what follows, we are using
the notation that $A\lesssim B$ means that $A$ is less
than or equal to a constant times $B$ where the constant
may change at each occurrence.  

Similarly, by \eqref{a.3}, we have 
\begin{multline}\label{a.4}
\Bigl| \, \int_M \bigl(\ola(P^0)(x,x)-\ala(P^0)(x,x)\bigr) \, dx \, \Bigr|
=\Bigl|\, \int_M \sum_j\bigl(\ola(\la_j)-\ala(\la_j)\bigr) \, |e_{\tau_k}(x)|^2 \, dx \, \Bigr|
\\
\lesssim \sum_j \int_M\bigl(1+\e^{-1}|\la-\la_j|)^{-2n} \, |e_{\la_j}(x)|^2 \, dx
\lesssim \e\la^{n-1}.
\end{multline}

Thus, in view of 
\eqref{2.18}, \eqref{a.4}, and \eqref{a.1},  in order to prove
Proposition~\ref{abstract}, it suffices to prove our main
estimate
\begin{equation}\label{2.19}
\int_M \, \Bigl(\ala(P_V)(x,x)-\ala(P^0)(x,x)\Bigr)
\, dx =O(\e^{-1}\la^{n-\frac32}).
\end{equation}
The implicit constants here of course depend
on our $V$ as in Proposition~\ref{abstract}.

To prove this, we shall use the fact that, by \eqref{2.9} and \eqref{2.15} the
kernel of $\ala(P_V)$ is
\begin{equation}\label{2.20}\ala(P_V)(x,y)=
\frac1\pi \int_{-\infty}^\infty
\rho(\e t)\frac{\sin \la t}t \, \sum_k\cos t\tau_k \, e_{\tau_k}(x)
e_{\tau_k}(y) \, dt.\end{equation}

To use this formula, we note that, by \eqref{2.10} if $f$
is a finite combination of the $\{e_{\tau_k}\}$, then
\begin{multline*}
\bigl(\partial_t^2 +H^0\bigr) \int_M
\sum_k \cos t\tau_k e_{\tau_k}(x)e_{\tau_k}(y) \, f(y)\, dy
\\
=-V(x) \cdot \int_M
\sum_k \cos t\tau_k e_{\tau_k}(x)e_{\tau_k}(y) \, f(y)\, dy
=-V(x)\cdot \bigl(\cos tP_V\bigr)(f)(x).
\end{multline*}
Also, since
\begin{multline*}\Bigl(\frac{d}{dt}\Bigr)^j
\Bigl(\, \int_M
\sum_k \cos t\tau_k e_{\tau_k}(x)e_{\tau_k}(y) \, f(y)\, dy
- \int_M
\sum_j \cos t\la_j e_j^0(x)e_j^0(y) \, f(y)\, dy\, \Bigr)\Big|_{t=0}
=0, \\ j=0,1,
\end{multline*}
by Duhamel's principle we have
\begin{align*}
\int_M
&\sum_k \cos t\tau_k e_{\tau_k}(x)e_{\tau_k}(y) \, f(y)\, dy 
-
\int_M\sum_j \cos t\la_j e_j^0(x)e_j^0(y) \, f(y)\, dy
\\
&=-\int_0^t
\bigl( \tfrac{\sin(t-s)P^0}{P^0}(V\cos (sP_V)f)\bigr)(x) \, ds
\\
&=-\int_0^t \int_M\int_M
\sum_j \tfrac{\sin(t-s)\la_j}{P^0}e_j^0(x)e_j^0(z)
V(z) \sum_k \cos s\tau_k e_{\tau_k}(z)e_{\tau_k}(y) f(y)\, dzdy ds.
\end{align*}
By \eqref{2.15} or \eqref{2.20} if we integrate this
against $\pi^{-1}\rho(\e t)\tfrac{\sin\la t}t$ we obtain
$\ola(P_V)f(x)-\ola(P^0)f(x)$.  Therefore, by
Lemma~\ref{triglemma} the kernel of $\ala(P_V)-\ala(P^0)$
is
\begin{multline}\label{2.21}
\bigl(\ala(P_V)-\ala(P^0)\bigr)(x,y)=
\\
\frac1\pi \sum_{j,k}\int_M\int_{-\infty}^\infty
\rho(\e t)\frac{\sin\la t}t 
\, m(\tau_k,\la_j) \, e_j^0(x)e_j^0(z) V(z)e_{\tau_k}(z)e_{\tau_k}(y) \, dzdt,
\end{multline}
where
\begin{equation}\label{2.22}
m(\tau,\mu)=
\begin{cases}
\frac{\cos t\tau-\cos t\mu}{\tau^2-\mu^2}, 
\quad \text{if } \, \tau\ne \mu
\\ \\
-\frac{t\sin t\tau}{2\tau}, \quad 
\text{if } \, \tau=\mu.
\end{cases}
\end{equation}
Thus, by \eqref{2.20}--\eqref{2.21} we have
\begin{multline}\label{2.20'}\tag{2.24$'$}
\bigl(\ala(P_V)-\ala(P^0)\bigr)(x,y)=
\\
\sum_{j,k}\int_M 
\frac{\ala(\tau_k)-\ala(\la_j)}{\tau_k^2-\la_j^2} e_j^0(x)e_j^0(z)V(z)e_{\tau_k}(z)
e_{\tau_k}(y) \, dz,
\end{multline}
if, by the second part of \eqref{2.22}
we interpret
\begin{equation}\label{2.23}
\frac{\ala(\tau)-\ala(\mu)}{\tau^2-\mu^2}
=\ala'(\tau)/2\tau, \quad \text{if } \, \, 
\tau=\mu.
\end{equation}

Thus, we would have \eqref{2.19} and consequently
Proposition~\ref{a.1} if we could prove the following:

\begin{proposition}\label{mainprop}
Let $V\in L^1(M)$ with $V^{-}\in
 {\mathcal K}(M)$, and $\ala(\tau)$ be defined as in \eqref{2.15}. Then we have
\begin{multline}\label{2.24}
\Bigl| \, \sum_{j,k}\int_M \int_M
\frac{\ala(\la_j)-\ala(\tau_k)}{\la_j^2-\tau_k^2} e_j^0(x)e_j^0(y)V(y)e_{\tau_k}(x)
e_{\tau_k}(y) \, dx dy\, \Bigr| 
\le C_V  \, 
\e^{-1}\la^{n-\frac32},
\end{multline}
for some constant $C_V$ depending on $V$.
\end{proposition}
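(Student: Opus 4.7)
The first step is to factor out the potential $V$ from the bilinear sum. Writing $a_{jk}=\langle e_j^0,e_{\tau_k}\rangle$ and observing that the inner integration over $y$ in \eqref{2.24} produces $\int_M V(y)e_j^0(y)e_{\tau_k}(y)\,dy$, the sum in \eqref{2.24} equals $\int_M V(y)F_\la(y)\,dy$, where
\[
F_\la(y):=\sum_{j,k}\frac{\ala(\la_j)-\ala(\tau_k)}{\la_j^2-\tau_k^2}\,a_{jk}\,e_j^0(y)\,e_{\tau_k}(y).
\]
Thus the contribution of $F_\la$ that can be controlled pointwise by $O(\la^{n-3/2})$ immediately produces the required bound via H\"older's inequality, while the remaining piece must be paired against $V$ more carefully.

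The second step is to recognize $F_\la$ as a diagonal value of a composition of wave propagators. The Fourier representation \eqref{2.15} of $\ala$ together with Lemma~\ref{triglemma} converts the divided-difference coefficient into a time integral; interchanging with the $(j,k)$-summation (justified via the rapid decay \eqref{2.16}--\eqref{2.17}) yields
\[
F_\la(y)=-\frac1\pi\int\rho(t)\frac{\sin\la t}{t}\int_0^t\Bigl[\cos(sP_V)\cdot\frac{\sin((t-s)P^0)}{P^0}\Bigr](y,y)\,ds\,dt.
\]
I would then use Duhamel's formula $\cos(sP_V)=\cos(sP^0)-\int_0^s\tfrac{\sin((s-\sigma)P^0)}{P^0}V\cos(\sigma P_V)\,d\sigma$ to split $F_\la=F^{\mathrm{main}}_\la+F^{\mathrm{err}}_\la$ into a free part and a $V$-correction.

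For $F^{\mathrm{main}}_\la$, the product-to-sum identity $\cos A\sin B=\tfrac12[\sin(A+B)-\sin(A-B)]$ collapses the operator product to $[\sin(tP^0)-\sin((2s-t)P^0)]/(2P^0)$; the second piece integrates to zero in $s\in[0,t]$ by oddness, leaving $F^{\mathrm{main}}_\la(y)=-\tfrac1{2\pi}\int\rho(t)\sin(\la t)\tfrac{\sin(tP^0)}{P^0}(y,y)\,dt$. Expanding spectrally and exploiting the Schwartz decay of $\widehat\rho$ together with the classical pointwise H\"ormander bound for the free unit band projection $\chi^0_\la(y,y)=O(\la^{n-1})$, one obtains $|F^{\mathrm{main}}_\la(y)|\lesssim\la^{n-2}$ pointwise, and hence the contribution of $F^{\mathrm{main}}_\la$ to \eqref{2.24} is $O(\|V\|_{L^1}\la^{n-2})$, well within the target.

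The main obstacle will be $F^{\mathrm{err}}_\la$, whose diagonal involves the triple product $\tfrac{\sin P^0}{P^0}\cdot V\cdot\cos P_V\cdot\tfrac{\sin P^0}{P^0}$ integrated in $(t,s,\sigma)$. Because $V$ is only assumed to lie in $L^1$ and pointwise bounds on the eigenfunctions of $H_V$ are unavailable in the Kato setting, one cannot directly estimate $\|F^{\mathrm{err}}_\la\|_\infty$. My plan is to regard $\int_M V(y)F^{\mathrm{err}}_\la(y)\,dy$ as a double integral $\int\int V(y)V(z)K_\la(y,z)\,dy\,dz$ and apply Cauchy--Schwarz so that one factor is a pure-$P^0$ quadratic form, controlled pointwise via the classical bound on $\chi_\la^0$ together with the two $1/P^0$ factors to give $O(\la^{n-2})$, while the other factor is a $V$-weighted spectral trace against $\chi_\la^V$, controlled by $\|V\|_{L^1}\la^{n-1}$ thanks to the integrated bound \eqref{2.4}. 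The geometric mean $\sqrt{\la^{n-1}\cdot\la^{n-2}}=\la^{n-3/2}$ produces exactly the exponent claimed in \eqref{2.24}, with an extra $\|V\|_{L^1}^{1/2}$-type norm absorbed into the constant $C_V$. The principal technical challenge is to organize the oscillatory $(t,s,\sigma)$-integrations so that, after Cauchy--Schwarz, the two spectral sums collapse to exactly these two known traces and no weaker quantity arises that would spoil the exponent.
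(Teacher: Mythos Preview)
Your reformulation as $\int_M V(y)F_\la(y)\,dy$ and the treatment of $F^{\mathrm{main}}_\la$ are correct and clean; the pointwise bound $|F^{\mathrm{main}}_\la(y)|=O(\la^{n-2})$ is a genuine observation.  However, the entire difficulty of Proposition~\ref{mainprop} is hidden in $F^{\mathrm{err}}_\la$, and what you propose for that term is a heuristic, not a proof.  The Cauchy--Schwarz plan faces a concrete obstruction: the oscillatory factor $\rho(t)\tfrac{\sin\la t}{t}$ is what localizes the $P_V$-spectrum to a unit window near $\la$, and this localization must survive whatever grouping you choose.  If you apply Cauchy--Schwarz at the level of the $(t,s,\sigma)$-integrand, the factor carrying $\cos(\sigma P_V)$ sees only $\sigma$, not $t$, so there is no mechanism to produce $\chi_\la^V$ rather than the full spectral sum $\sum_k|e_{\tau_k}(y)|^2$, which diverges.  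If instead you integrate in $t$ first, the coupling $0\le\sigma\le s\le t$ obstructs a clean separation into the two ``known traces'' you invoke.  Moreover, the factor you hope will be a ``pure-$P^0$ quadratic form'' is $\tfrac{\sin((t-s)P^0)}{P^0}M_V\tfrac{\sin((s-\sigma)P^0)}{P^0}$, which still contains $V$ and is not a spectral multiplier; controlling its Hilbert--Schmidt norm against $\|V\|_{L^1}$ already requires resolvent-type kernel bounds of the sort the paper proves separately in Lemma~\ref{freeres}.  You acknowledge this yourself in the final sentence, but that sentence is precisely where the proof would have to begin.

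The paper takes an entirely different route: it never iterates Duhamel a second time, but stays in the spectral representation and splits the $k$-sum according to whether $\tau_k\in[\la/2,10\la]$, $\tau_k>10\la$, or $\tau_k<\la/2$ (Propositions~\ref{compprop}, \ref{largeprop}, \ref{smallprop}).  For comparable $\tau_k$ the divided difference $(\ala(\la_j)-\ala(\tau_k))/(\la_j^2-\tau_k^2)$ is decomposed dyadically in $|\la_j-\tau_k|$ and estimated via the $\delta_\tau$-Lemma (Lemma~\ref{delta}) together with the kernel bounds of Lemma~\ref{lemma3.2}; the large- and small-frequency pieces rely on the cutoff free-resolvent bound (Lemma~\ref{freeres}) and the heat-kernel estimates of Lemma~\ref{heat}.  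The exponent $n-\tfrac32$ arises not as a geometric mean of two traces but from pairing an $L^2_x$-bound of size $\la^{\frac{n-1}2-1}2^{-\ell/2}$ on the free kernel against the crude pointwise bound $(\sum_{\tau_k\approx\la}|e_{\tau_k}(y)|^2)^{1/2}\lesssim\la^{n/2}$ from \eqref{2.36}.
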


As we shall see in the proof of Proposition~\ref{mainprop}, the constant $C_V$ depends on  $\|V\|_{L^1(M)}$. The proof will also use
heat kernel estimates involving $H_V$.  Steps like this
will contribute to the constant $C_V$ in 
\eqref{2.24}.

Note that the kernel in \eqref{2.20'} involves an amalgamation of the kernels of $\ala(P^0)$, $\ala(P_V)$ and
the resolvent kernels $(H_V-\mu^2)^{-1}$ and $(H^0-\mu^2)^{-1}$.  To prove \eqref{2.24} we shall attempt to
separate the contributions of the various components by using the following simple lemma.

\begin{lemma}\label{delta}  Let $I\subset \R_+$ and for
eigenvalues $\tau_k\in I$ assume that $\delta_{\tau_k}
\in [0,\delta]$.  Then if $m\in C^1(\R_+\times M)$
\begin{multline}\label{2.25}
\int_M \Bigl| \sum_{\tau_k\in I}m(\delta_{\tau_k},x) \,
a_k e_{\tau_k}(x)\Bigr| \, dx
\\
\le\Bigl(\, \|m(0, \, \cdot\, )\|_{L^2(M)}
+\int_0^\delta \bigl\| \tfrac\partial{\partial s}
m(s, \, \cdot\, )\bigr\|_{L^2(M)}\, ds
\, \Bigr) \times \bigl(\, \sum_{\tau_k\in I}
|a_k|^2\, \bigr)^{1/2}.
\end{multline}
\end{lemma}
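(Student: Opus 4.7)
The plan is to decouple the $x$-dependence of $m$ from the eigenfunction expansion by the fundamental theorem of calculus, after which the bound reduces to Cauchy--Schwarz in $x$ together with the orthonormality of the $\{e_{\tau_k}\}$ (Parseval). The whole point is that the eigenvalue-dependent sample point $\delta_{\tau_k}$ only enters through an indicator function, which allows the sum to be pulled inside an $s$-integral on the fixed interval $[0,\delta]$.

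Concretely, I would first write, for each $\tau_k\in I$,
\begin{equation*}
m(\delta_{\tau_k},x)=m(0,x)+\int_0^{\delta}\chi_{[0,\delta_{\tau_k}]}(s)\,\partial_s m(s,x)\,ds,
\end{equation*}
so that
\begin{equation*}
\sum_{\tau_k\in I} m(\delta_{\tau_k},x)\,a_k e_{\tau_k}(x)
= m(0,x)\,F(x)+\int_0^{\delta}\partial_s m(s,x)\,F_s(x)\,ds,
\end{equation*}
where $F(x)=\sum_{\tau_k\in I}a_k e_{\tau_k}(x)$ and $F_s(x)=\sum_{\tau_k\in I,\ \delta_{\tau_k}\ge s}a_k e_{\tau_k}(x)$. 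I would then take the $L^1(M)$ norm in $x$ and apply Minkowski's integral inequality to move the $ds$-integral outside the $L^1_x$ norm, which is legitimate since $m\in C^1$ and $I$ is effectively a bounded index set.

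Next, for each of the two resulting pieces I would apply the Cauchy--Schwarz inequality in $x$ to factor off $\|m(0,\cdot)\|_{L^2(M)}$ and $\|\partial_s m(s,\cdot)\|_{L^2(M)}$, respectively. The remaining $L^2(M)$ norms of $F$ and $F_s$ are handled by Parseval: since the $\{e_{\tau_k}\}$ form an orthonormal family, $\|F\|_{L^2(M)}=(\sum_{\tau_k\in I}|a_k|^2)^{1/2}$ and $\|F_s\|_{L^2(M)}=(\sum_{\tau_k\in I,\ \delta_{\tau_k}\ge s}|a_k|^2)^{1/2}\le (\sum_{\tau_k\in I}|a_k|^2)^{1/2}$ for every $s\in[0,\delta]$. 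Adding the two contributions yields exactly \eqref{2.25}.

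There is essentially no obstacle here beyond bookkeeping; the one point requiring mild care is the use of Minkowski's inequality/Fubini to interchange the $L^1_x$ norm with the $s$-integration and, in the second term, to interchange the $s$-integration with the sum over $k$. Both are justified by the $C^1$ hypothesis on $m$ (so $\partial_s m\in C^0$) and by the fact that in every application of the lemma the relevant sum is effectively finite (e.g.\ a dyadic spectral window), so dominated convergence and Fubini apply without further hypotheses.
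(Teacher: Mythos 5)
Your proof is correct and follows essentially the same route as the paper's: the identity $m(\delta_{\tau_k},x)=m(0,x)+\int_0^\delta \1_{[0,\delta_{\tau_k}]}(s)\partial_s m(s,x)\,ds$, Minkowski to pull the $s$-integral out of the $L^1_x$ norm, Cauchy--Schwarz in $x$, and orthogonality of the $e_{\tau_k}$ to evaluate the remaining $L^2$ norms, with $\|F_s\|_{L^2}\le(\sum_{\tau_k\in I}|a_k|^2)^{1/2}$. No differences of substance.
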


\begin{proof}
We shall use the fact that
$m(\delta_{\tau_k},x)=m(0,x)+\int_0^{\delta}
\1_{[0,\delta_{\tau_k}]}(s)\, \tfrac\partial{\partial s}
m(s,x) \, ds$,
where $\1_{[0,\delta_{\tau_k}]}(s)$ is the indicator function of the the interval $[0,\delta_{\tau_k}]
\subset [0,\delta]$.
Therefore, by Minkowski's inequality,
the left side of \eqref{2.25} is dominated by
\begin{align*}
\int_M &\bigl|\, m(0,x)\cdot \sum_{\tau_k\in I}a_k
e_{\tau_k}(x)\, \bigr| \, dx
+\int_M \bigl| \, \sum_{\tau_k\in I}\int_0^\delta
\1_{[0,\delta_{\tau_k}]}(s)\tfrac\partial{\partial s}
m(s,x) a_k e_{\tau_k}(x)\, ds \, \bigr| \, dx
\\
&\le \int_M \bigl| \, m(0,x)\cdot \sum_{\tau_k\in I}a_k
e_{\tau_k}(x)\, \bigr| \, dx
+\int_0^\delta \Bigl(\, 
\int_M \bigl| \, \tfrac\partial{\partial s}m(s,x)\, 
\bigr| \cdot \bigl| \, 
\sum_{\tau_k\in I}a_k
e_{\tau_k}(x)
\, \bigr| \, dx
\, \Bigr)\, ds
\\
&\le 
\|m(0,\, \cdot\, )\|_2 \cdot 
\|\sum_{\tau_k\in I}a_ke_{\tau_k}\|_2
+\int_0^\delta\bigl( \, 
\bigl\|\tfrac\partial{\partial s}
m(s,\, \cdot \, )\|_2 \cdot \bigl\| \1_{[0,\delta_{\tau_k}]}(s)a_ke_{\tau_k}\, \bigr\|_2\, \bigr)\, ds
\\
&=\|m(0,\, \cdot \, )\|_2 \cdot (\sum_{\tau_k\in I}
|a_k|^2)^{1/2}
+\int_0^\delta \bigl\|\tfrac\partial{\partial s}m(s, \, \cdot\, )\|_2\cdot \bigl(\sum_{\tau_k\in I}
|\1_{[0,\delta_{\tau_k}]}(s)a_k|^2\bigr)^{1/2} \, ds
\\
& \le\Bigl(\, \|m(0, \, \cdot\, )\|_{L^2(M)}
+\int_0^\delta \bigl\| \tfrac\partial{\partial s}
m(s, \, \cdot\, )\bigr\|_{L^2(M)}\, ds
\, \Bigr) \times \bigl(\, \sum_{\tau_k\in I}
|a_k|^2\, \bigr)^{1/2},
\end{align*}
as desired.
\end{proof}

Next, recall that we mentioned that the kernel in \eqref{2.24} is a juxtaposition of the kernels of
$\ala(P^0)$ as well as resolvent-type kernels.  To handle the former, we shall appeal to the following straightforward result.

\begin{lemma}\label{ala}
Let $\ala(P^0)$ be defined by \eqref{2.15} and
the analog of \eqref{2.20} involving $P^0$.  Then
the kernel of 
$(P^0)^\mu\ala(P^0)$, $\mu=0,1,2,\dots$ satisfies
\begin{equation}\label{2.26}
\bigl((P^0)^\mu\, \ala(P^0)\bigr)(x,y)=
\sum_j \la_j^\mu \ala(\la_j)e_j^0(x)e_j^0(y)=
O(\la^{n+\mu}),
\end{equation}
and, moreover,
\begin{equation}\label{2.27}
\bigl\| \bigl((P^0)^\mu\, \ala(P^0)\bigr(\, \cdot\, ,y )\bigr\|_{L^2(M)}
=O(\la^{n/2+\mu}),
\end{equation}
where the implicit constants are independent of $\e$.
\end{lemma}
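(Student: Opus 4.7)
The plan is to reduce both estimates to two inputs: the rapid decay $\ala(\tau)=O((1+|\la-\tau|)^{-N})$ for every $N$, from \eqref{2.17}, and the classical pointwise unit-band spectral projection bound for the unperturbed operator,
$$\chi^0_k(x,x)\,:=\,\sum_{\la_j\in[k,k+1)}|e_j^0(x)|^2\,=\,O(k^{n-1}),\qquad k\ge1,$$
uniformly in $x\in M$.  The second input is H\"ormander's pointwise version of \eqref{2.1'}, obtained from the standard short-time parametrix for $\cos(tP^0)$, i.e., the local Weyl law.  Because $\la_j\le\la(1+|\la-\la_j|)$ once $\la\ge 1$, the factor $\la_j^\mu$ is absorbed by the rapid decay of $\ala$, so effectively $|\la_j^\mu\ala(\la_j)|\lesssim\la^\mu(1+|\la-\la_j|)^{-N}$ for every $N$.

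For \eqref{2.27}, orthonormality of $\{e_j^0\}$ gives
$$\bigl\|\bigl((P^0)^\mu\,\ala(P^0)\bigr)(\,\cdot\,,y)\bigr\|_{L^2(M)}^2\,=\,\sum_j\la_j^{2\mu}\,|\ala(\la_j)|^2\,|e_j^0(y)|^2.$$
Partitioning the eigenvalues into the unit bands $[k,k+1)$ and applying both of the ingredients above,
$$\sum_j\la_j^{2\mu}|\ala(\la_j)|^2|e_j^0(y)|^2\,\lesssim\,\la^{2\mu}\sum_{k\ge1}(1+|\la-k|)^{-2N}\,\chi^0_k(y,y)\,\lesssim\,\la^{2\mu+n-1},$$
for $N$ chosen large.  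Taking square roots even improves the stated $O(\la^{n/2+\mu})$.

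For the kernel bound \eqref{2.26} I would separate the two points symmetrically via Cauchy--Schwarz,
$$\Bigl|\sum_j\la_j^\mu\ala(\la_j)e_j^0(x)e_j^0(y)\Bigr|\le\Bigl(\sum_j|\la_j^\mu\ala(\la_j)|\,|e_j^0(x)|^2\Bigr)^{1/2}\Bigl(\sum_j|\la_j^\mu\ala(\la_j)|\,|e_j^0(y)|^2\Bigr)^{1/2},$$
and handle each factor by the same unit-band decomposition, picking up $O(\la^{\mu+n-1})$ per factor and hence $O(\la^{\mu+n-1})$ for the whole kernel, again stronger than the stated $O(\la^{n+\mu})$.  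The only nontrivial ingredient is the pointwise local Weyl bound on $\chi^0_k(x,x)$; once that is in hand, both estimates reduce to bookkeeping against the rapid decay of $\ala$, so I do not anticipate any genuine obstacle beyond quoting the parametrix construction for $\cos(tP^0)$.
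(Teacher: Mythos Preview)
Your overall strategy---unit-band decomposition combined with the pointwise local Weyl bound---is exactly what the paper does, but your execution contains a genuine error. You claim that $\ala(\tau)=O\bigl((1+|\la-\tau|)^{-N}\bigr)$ for every $N$, citing \eqref{2.17}. This is wrong on two counts. First, \eqref{2.17} concerns only the \emph{derivatives} of $\ala$, not $\ala$ itself. Second, and more importantly, $\ala$ is a smoothed version of the indicator $\ola=\1_{[-\la,\la]}$; the relevant estimate is \eqref{2.16}, which says $\ola(\tau)-\ala(\tau)=O\bigl((1+|\la-\tau|)^{-N}\bigr)$. Hence $\ala(\tau)\approx 1$ throughout $[0,\la]$ and only decays rapidly for $\tau>\la$. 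It certainly does not decay on both sides of $\la$.

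This matters because your ``improved'' bounds are false. For instance, with $\mu=0$ the diagonal of the kernel is
\[
\sum_j \ala(\la_j)\,|e_j^0(x)|^2 \;\approx\; \sum_{\la_j\le\la}|e_j^0(x)|^2 \;\approx\; c_n\la^n
\]
by the pointwise Weyl law itself, so $O(\la^{n-1})$ is impossible. Once you use the correct behavior of $\ala$---bounded on $[0,\la]$, rapidly decaying above $\la$---your unit-band computation gives
\[
\sum_j \la_j^{2\mu}|\ala(\la_j)|^2|e_j^0(y)|^2 \;\lesssim\; \sum_{k\le 2\la} k^{2\mu}\cdot k^{n-1} \;+\; \sum_{k>2\la} k^{2\mu}(k-\la)^{-2N}k^{n-1} \;\lesssim\; \la^{n+2\mu},
\]
which is precisely \eqref{2.27}, and the analogous fix yields \eqref{2.26}. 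So the repair is immediate, but as written your argument rests on a false premise.
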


The proof of the lemma is very simple.  
First, by the pointwise Weyl formula of Avakumovi\'{c}~\cite{Avakumovic}, Levitan~\cite{Levitan}
and H\"ormander~\cite{HSpec}  (see also \cite{SoggeHangzhou}),
\begin{equation}\label{freespec}\sum_{\la_j\in [\ell,\ell+1]}|e^0_j(x)e^0_j(y)|=
O(\ell^{n-1}), \quad \ell \in {\mathbb N}.
\end{equation}
If we use this and \eqref{2.16}, we obtain \eqref{2.26}.  To prove the other inequality, \eqref{2.27}, we note that, by orthogonality
$$
\bigl\|(P^0)^\mu\ala(P^0)(\, \cdot\, ,y)\bigr\|_{L^2(M)}^2
=\sum_j \la_j^{2\mu}\bigl(\ala(\la_j)\bigr)^2 |e_j^0(y)|^2
=O(\la^{n+2\mu}),
$$
by this argument, which is \eqref{2.27}.

\medskip

To deal with the contributions of resolvent type operators in the mixture \eqref{2.24} we shall need a couple more results.  The first is bounds for cutoff resolvent operators for the free operator $H^0$.

\begin{lemma}\label{freeres}
Fix $\eta\in C^\infty(\R_+)$ satisfying
$\eta(s)=0$ on $s\le 2$ and $\eta(s)=1$, $s>4$.  Then if we set for $\tau 
\gg 1$
\begin{equation}\label{2.29}
R_\tau(x,y)=\sum_j \frac{\eta(\la_j/\tau)}{
\la_j^2-\tau^2} \, e_j^0(x)e_j^0(y).
\end{equation}
we have
\begin{equation}\label{2.30}
|R_\tau(x,y)|\le C_N \tau^{n-2}
h_n\bigl(\tau \, d_g(x,y)\bigr) \, 
\bigl(1+\tau \, d_g(x,y)\bigr)^{-N},
\end{equation}
for any $N=1,2,3,\dots$, if  $h_n(r)$ is as in \eqref{1.4}.
The constant
$C_N$ depends on $N$, $(M,g)$ and finitely many
derivatives of $\eta$.
\end{lemma}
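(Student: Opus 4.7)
The plan is to realize $R_\tau$ as the spectral multiplier $m(P^0)$ with $m(\la)=\eta(\la/\tau)/(\la^2-\tau^2)$ and bound its kernel via the Hadamard parametrix for the cosine propagator.  Since $\eta$ is supported in $[2,\infty)$, $m$ extends to a smooth even function on $\R$ that avoids the poles at $\la=\pm\tau$ and decays like $\la^{-2}$ at infinity.  Writing $m(\la)=\int g(t)\cos(t\la)\,dt$ and scaling $\la=\tau u$ shows $g(t)=\tau^{-1}\psi(t\tau)$, where $\psi$ is the cosine Fourier transform of $u\mapsto \eta(|u|)/(u^2-1)$; this fixed function lies in $C^\infty(\R)\cap L^1(\R)$ with all derivatives of order $O(u^{-2})$ at infinity, so $\psi$ is bounded and all its derivatives decay faster than any polynomial.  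Consequently $R_\tau=\int g(t)\cos(tP^0)\,dt$.

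Fix $\rho\in C_0^\infty(-\varepsilon_0,\varepsilon_0)$ with $\rho\equiv 1$ near $0$ and $\varepsilon_0$ less than half the injectivity radius of $(M,g)$, and split $R_\tau=R_\tau^{\mathrm{near}}+R_\tau^{\mathrm{far}}$ according to $\rho$ and $1-\rho$.  On the support of $1-\rho$ we have $|t|\ge \varepsilon_0/2$, where $|\partial_t^k g(t)|=O(\tau^{-N})$ for every $k,N$.  Integrating by parts in $t$ in $\int(1-\rho(t))g(t)\cos(t\la_j)\,dt$ and summing dyadically in $j$ using \eqref{freespec} then yields $|R_\tau^{\mathrm{far}}(x,y)|=O(\tau^{-N})$ for every $N$, which is trivially within \eqref{2.30}.

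For $R_\tau^{\mathrm{near}}$, the Hadamard parametrix (see, e.g., \cite{SoggeHangzhou}) represents $\cos(tP^0)(x,y)$ for $|t|<\varepsilon_0$, modulo a smooth kernel, as
$$
(2\pi)^{-n}\int_{\R^n}e^{i\langle \exp_x^{-1}(y),\xi\rangle_{g(x)}}\cos(t|\xi|_{g(x)})\,a(x,y,\xi)\,d\xi,
$$
with $a$ a classical amplitude of order $0$.  Substituting and doing the $t$-integral first, $\int\rho(t)g(t)\cos(t|\xi|_{g(x)})\,dt$ equals $m(|\xi|_{g(x)})$ up to an $O(\tau^{-N})$ error (the contribution of $(1-\rho)g$).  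Rescaling $\xi=\tau\zeta$ then reduces the principal part of $R_\tau^{\mathrm{near}}(x,y)$ to
$$
\tau^{n-2}\int_{\R^n}e^{i\tau\langle \exp_x^{-1}(y),\zeta\rangle_{g(x)}}\frac{\eta(|\zeta|_{g(x)})}{|\zeta|_{g(x)}^2-1}\,\widetilde a(x,y,\tau\zeta)\,d\zeta.
$$

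The main obstacle is extracting from this oscillatory integral both the on-diagonal singularity $h_n(\tau d_g(x,y))$ and the off-diagonal decay $(1+\tau d_g(x,y))^{-N}$.  Since $\eta(|\zeta|)/(|\zeta|^2-1)$ is smooth (the pole at $|\zeta|=1$ is excised by $\supp\eta$), repeated integration by parts in $\zeta$ produces the decay, each integration contributing a factor of $(\tau d_g(x,y))^{-1}$.  The singularity is inherited from the Fourier transform of $\eta(|\zeta|)/(|\zeta|^2-1)$, which agrees up to smoother terms with the Fourier transform of a smoothly cutoff $|\zeta|^{-2}$—i.e., with the Green's function of $-\Delta$ on $\R^n$, of type $|x|^{2-n}$ for $n\ge 3$ and $\log(1/|x|)$ for $n=2$, exactly matching $h_n$.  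The two-dimensional logarithmic case is the most delicate and is handled by splitting the $|\zeta|$ integral at the threshold $|\zeta|\sim (\tau d_g(x,y))^{-1}$ and tracking the resulting logarithm.  Combining these estimates with the overall $\tau^{n-2}$ prefactor produces \eqref{2.30}.
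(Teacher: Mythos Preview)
Your approach is correct and is essentially the paper's argument made explicit: you recognize $R_\tau=\tau^{-2}m(P^0/\tau)$ with $m(\mu)=\eta(|\mu|)/(\mu^2-1)$ a symbol of order $-2$, pass through the cosine propagator and the Hadamard parametrix, and reduce to the oscillatory integral $\tau^{n-2}\int e^{i\tau\langle\exp_x^{-1}y,\zeta\rangle}\,\eta(|\zeta|)/(|\zeta|^2-1)\,d\zeta$. The paper does exactly this, only more tersely---it writes $R_\tau=\tau^{-2}m(P^0/\tau)$, notes $m\in S^{-2}$, and cites \cite[\S4.3]{SFIO2} to obtain the same oscillatory integral representation (written there in the unscaled variable $\xi=\tau\zeta$) together with the $O(\tau^{-N})$ off-diagonal bound; your near/far splitting and parametrix step are precisely what that citation unpacks.
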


Here we are abusing the notation a bit.  In 
 \eqref{2.30} we mean that the inequality holds near
the diagonal (so that $d_g(x,y)$ is well-defined) and that outside of this neighborhood of the diagonal $R_\tau(x,y)$ is $O(\tau^{-N})$ for all $N$.  We shall
state certain inequalities in this manner in what follows.

To verify \eqref{2.30}, we note that the integral
operator $R_\tau$ arising from the kernel
$R_\tau(x,y)$ is
$$\tau^{-2} m(P^0/\tau),$$
where
$$m(\mu)=\frac{\eta(|\mu|)}{\mu^2-1}.$$
Thus, $m$ is a symbol of order -2, i.e.,
$$\partial_\mu^j m(\mu)=O((1+\mu)^{-2-j}), \, j=0,1,2,\dots.$$
As a result, one can use the arguments in \cite[\S 4.3]{SFIO2} to see that \eqref{2.30} is valid.  Indeed, modulo lower order terms, $R_\tau(x,y)$ equals
$$(2\pi)^{-n}\int_{\Rn}
\tau^{-2}\frac{\eta(|\xi|/\tau)}{(|\xi|/\tau)^2-1}
\, e^{id_g(x,y)\xi_1} \, d\xi,$$
near the diagonal, which satisfies the bounds in \eqref{2.30}, while outside of a fixed neighborhood of the diagonal $R_\tau(x,y)=O(\tau^{-N})$ for all $N$.

\medskip

We also need bounds for the kernels of $(H_V)^{-j}$.

\begin{lemma}\label{heat}
Let $(H_V)^{-j}(x,y)=\sum_k \tau_k^{-2j} e_{\tau_k}(x)e_{\tau_k}(y)$ be the kernel of $(H_V)^{-j}$, $j=1,2,\dots$. 
Then if $h_n(r)$ is
as in \eqref{1.4}
\begin{equation}\label{2.31}
\bigl(H_V\bigr)^{-1}(x,y)\lesssim 
\begin{cases}h_n(d_g(x,y)), \quad \text{if }\, \, 
d_g(x,y)\le \textrm{Inj }(M)/2,
\\ 1, \quad \text{otherwise}.
\end{cases}
\end{equation}
Furthermore, if $n\ge 5$ and $j <n/2$, $j\in {\mathbb N}$ we have 
\begin{equation}\label{2.32}
\bigl(H_V\bigr)^{-j}(x,y)\lesssim 
\begin{cases}
\bigl(d_g(x,y)\bigr)^{-n+2j}, \quad \text{if }\, \, 
d_g(x,y)\le \textrm{Inj }(M)/2,
\\
1 \quad \text{otherwise}.
\end{cases}
\end{equation}
\end{lemma}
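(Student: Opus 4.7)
The plan is to represent $(H_V)^{-j}$ through the heat semigroup using the Laplace-transform identity
\begin{equation*}
(H_V)^{-j}(x,y)=\frac{1}{\Gamma(j)}\int_0^\infty t^{j-1}\, e^{-tH_V}(x,y)\, dt,
\end{equation*}
which is legitimate because $H_V\ge 1$ by \eqref{1.7}, and then to insert pointwise Gaussian bounds on the heat kernel $e^{-tH_V}(x,y)$ that are available under the Kato-class assumption $V^-\in {\mathcal K}(M)$.

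First I would establish the Gaussian upper bound
\begin{equation*}
0\le e^{-tH_V}(x,y)\le C_V\, t^{-n/2}\exp\bigl(-c\, d_g(x,y)^2/t\bigr),\qquad 0<t\le 1.
\end{equation*}
By the Feynman--Kac representation the kernel $e^{-tH_V}(x,y)$ is monotone non-increasing in $V$ in the pointwise sense, so dominating $V=V^+-V^-\ge -V^-$ yields $e^{-tH_V}(x,y)\le e^{-tH_{-V^-}}(x,y)$, and the latter satisfies a Gaussian upper bound by the standard Khasminskii/Kato theory on compact manifolds (see \cite{guneysu2017heat}, \cite{SimonSurvey}) under the assumption $V^-\in{\mathcal K}(M)$. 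For $t\ge 1$, writing $e^{-tH_V}=e^{-H_V/2}\,e^{-(t-1)H_V}\,e^{-H_V/2}$, combining the $L^2\to L^\infty$ mapping bound implied by the Gaussian estimate at $t=1/2$ with the $L^2$-operator norm bound $\|e^{-(t-1)H_V}\|\le e^{-(t-1)}$ from \eqref{1.7} yields $|e^{-tH_V}(x,y)|\lesssim e^{-t/2}$.

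Next I would split the $t$-integral at $t=1$. The tail $\int_1^\infty t^{j-1}e^{-t/2}\,dt$ is $O(1)$, which already handles the ``otherwise'' case $d_g(x,y)>\mathrm{Inj}(M)/2$ in both \eqref{2.31} and \eqref{2.32}, since on that range the Gaussian factor yields $e^{-c\,d_g(x,y)^2/t}\le e^{-c_0/t}$ for $t\le 1$, making $\int_0^1 t^{j-1-n/2}e^{-c_0/t}\,dt$ bounded as well. For $d_g(x,y)\le\mathrm{Inj}(M)/2$, the remaining contribution is
\begin{equation*}
\int_0^1 t^{j-1-n/2}\exp\bigl(-c\, d_g(x,y)^2/t\bigr)\,dt,
\end{equation*}
and the substitution $s=d_g(x,y)^2/t$ transforms this into
\begin{equation*}
d_g(x,y)^{2j-n}\int_{d_g(x,y)^2}^{\infty} s^{n/2-j-1}e^{-cs}\,ds.
\end{equation*}
When $j<n/2$ the remaining integral is uniformly bounded as $d_g(x,y)\to 0$, giving the power bound $d_g(x,y)^{2j-n}$ of \eqref{2.32}. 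For $j=1$, $n\ge 3$ this reproduces $d_g(x,y)^{2-n}=h_n(d_g(x,y))$ in \eqref{2.31}; in the borderline case $j=1$, $n=2$ the integral $\int_{d^2}^\infty s^{-1}e^{-cs}\,ds$ grows like $\log(1/d)$, matching $h_2(d_g(x,y))=\log(2+1/d_g(x,y))$.

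The main obstacle is justifying the Gaussian heat kernel bound under the slightly non-standard hypothesis that only $V^-$ is Kato while $V^+$ is merely $L^1$; I would dispatch this via the Feynman--Kac monotonicity argument above, which reduces the problem to the textbook Kato case $V=-V^-$. Everything else is a routine Laplace-transform calculation.
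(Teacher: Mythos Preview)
Your proposal is correct and follows essentially the same approach as the paper: represent $(H_V)^{-j}$ via the Laplace transform of the heat kernel, invoke the Gaussian upper bound for $0<t\le 1$ (handling the $V^+\in L^1$ issue via Feynman--Kac monotonicity, exactly as the paper does), obtain exponential decay for $t\ge 1$, and then compute the resulting $t$-integral. The only noteworthy difference is in the large-time step: you use the semigroup factorization $e^{-tH_V}=e^{-H_V/2}e^{-(t-1)H_V}e^{-H_V/2}$ together with $H_V\ge 1$, whereas the paper first derives the dyadic pointwise bound $\sum_{\tau_k\in[\lambda,2\lambda)}|e_{\tau_k}(x)|^2\lesssim \lambda^n$ (their \eqref{2.36}) and deduces $e^{-tH_V}(x,y)\lesssim e^{-t/2}$ from that. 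Your route is slightly more direct for this lemma in isolation; the paper's detour has the side benefit of establishing \eqref{2.36}, which they need independently later (e.g.\ in \eqref{3.13}).
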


To prove \eqref{2.31} or \eqref{2.32}, we note that
\begin{equation}\label{2.33}\bigl(H_V\bigr)^{-j}(x,y)=\int_0^\infty 
t^{j-1}  \, \bigl(e^{-tH_V}\bigr)(x,y)\, dt.
\end{equation}
We then use the heat kernel estimates of Li and Yau~\cite{LiYau} ($V\in C^\infty$) and 
Sturm~\cite[(4.14) Corollary]{Sturm}
($V\in {\mathcal K}(M)$), which say that for $0<t\le 1$
there is a uniform constant $c=c_{M,V}>0$ so that
\begin{equation}\label{2.34}
(e^{-tH_V}\bigr)(x,y)
\lesssim 
\begin{cases} t^{-n/2} \exp(-c(d_g(x,y))^2/t), \quad 
\quad \text{if }\, \, 
d_g(x,y)\le \textrm{Inj }(M)/2,
\\
1 \quad \text{otherwise}.
\end{cases}
\end{equation}
here we do not need $V^{+} \in {\mathcal K}(M)$, since by $Feynman$-$Kac \,\,formula$, $e^{-tH_V}|f|$ is monotone decreasing as $V^{+}$ increases, so 
$V^{+}\in L^1(M)$ will not affect the bound in \eqref{2.34}.

As a consequence of \eqref{2.34},
we have for $0<t\leq 1$
$$\int_M |(e^{-tH_V}\bigr)(x,y)|^2 dy \lesssim t^{-\frac n2}.
$$
By Schwarz's inequality, we have $\|e^{-tH_V}\|_{L^2\rightarrow L^\infty}\lesssim t^{-\frac n4}$. If we consider the kernels of the dyadic spectral projection operators
\begin{equation}\label{2.35}
\tilde\chi_\la^V(x,y)=\sum_{\tau_k\in [\la,2\la)}e_{\tau_k}(x)e_{\tau_k}(y),
\end{equation}
for $H_V$, then, by the spectral theorem, we have $$\|\tilde\chi_\la^V\|_{L^2\rightarrow L^\infty} \lesssim \|e^{-\la^{-2}H_V}\|_{L^2\rightarrow L^\infty}\lesssim \la^{\frac n2},$$
which, along with the Cauchy-Schwarz inequality, implies
\begin{equation}\label{2.36}
\sup_{x,y\in M}\bigl|\sum_{\tau_k\in [\la,2\la)}e_{\tau_k}(x)e_{\tau_k}(y)\bigr|
\le \sup_{x\in M}\sum_{\tau_k\in [\la,2\la)}|e_{\tau_k}(x)|^2 = \bigl\|\tilde\chi_\la^V \bigr\|_{L^2\rightarrow L^\infty}^2
 \lesssim \la^n.
\end{equation}

Since the eigenvalues of $H^V$ are all $\ge 1$, by \eqref{2.36} we have
\begin{equation}\label{2.37}
(e^{-tH_V}\bigr)(x,y)\lesssim e^{-t/2}, \quad t>1.
\end{equation}
If we use \eqref{2.34}, \eqref{2.37} along with \eqref{2.33}, we  obtain \eqref{2.31} and \eqref{2.32}.

\newsection{Proof of the universal Weyl law involving singular potentials}

To prove Proposition~\ref{mainprop}, which, as noted, implies our main result, Theorem~\ref{thm1.1}, we shall split things into three different cases that require slightly different arguments.  Specifically, we shall first handle the contribution of frequencies $\tau_k$ which are comparable to $\lambda$, and then those that are relatively small followed by ones that are  relatively large.

\subsection*{Handling the contribution of comparable frequencies}

In this subsection we shall handle frequencies $\tau_k$ which are comparable to $\lambda$, which one would expect to be
the main contribution to the Weyl error term in \eqref{1.12}.  Specifically, we shall prove the following.

\begin{proposition}\label{compprop}
Let $V\in L^1(M)$ with $V^{-}\in
 {\mathcal K}(M)$, and $\ala(\tau)$ be defined as in \eqref{2.15} with $\e=\e(\la)$ satisfying \eqref{a.1}. Then we have
\begin{multline}\label{3.1}
\Bigl| \, \sum_{j}
\sum_{\{k: \, \tau_k\in [\lambda/2,10\lambda]\}}\int_M \int_M
\frac{\ala(\la_j)-\ala(\tau_k)}{\la_j^2-\tau_k^2} e_j^0(x)e_j^0(y)V(y)e_{\tau_k}(x)
e_{\tau_k}(y) \, dx dy\, \Bigr| 
\\
\le C_V \e^{-1} \, 
\la^{n-\frac32},
\end{multline}
for some constant $C_V$ depending on $V$.
\end{proposition}

To prove Proposition~\ref{compprop}, let us fix a Littlewood-Paley bump function
$\beta\in C^\infty_0((1/2,2))$ satisfying
$$
\sum_{\ell=-\infty}^\infty \beta(2^{-\ell} s)=1, \quad s>0.$$
Let $\ell_0\le 0$ be the largest integer such that $2^{\ell_0}\le \e$, and set
$$\beta_{\ell_0}(s)=\sum_{\ell\le \ell_0} \beta(2^{-\ell}|s|)
\in C^\infty_0((-2,2)),$$
and 
$$\tilde \beta(s)=s^{-1}\beta(|s|)\in C^\infty_0
\bigl(\{|s|\in (1/2,2)\}\bigr).$$

We then write for $\la/2\le \tau\le 10\la$
\begin{align} \label{3.2}
K_\tau(x,y)&=\sum_j \frac{\ala(\la_j)-\ala(\tau)}{\la_j^2-\tau^2}e_j^0(x)e_j^0(y)
\\
&= \sum_j \frac{\ala(\la_j)-\ala(\tau)}{\la_j-\tau}\frac{\beta_{\ell_0}(\la_j-\tau)}{\la_j+\tau}\, e_j^0(x)e_j^0(y) \notag
\\
&\, \, \, \,  \, \, +\sum_{\{\ell\in {\mathbb Z}: \, \e<2^\ell \le \la/100\}} \Bigl(\, \sum_j
\frac{2^{-\ell}\tilde\beta(2^{-\ell}(\la_j-\tau))}
{\la_j+\tau}
\, (\ala(\la_j)-\ala(\tau)) \, e^0_j(x)e^0_j(y)\, \Bigr)
\notag
\\
&\, \, \, \,  \, \, + 
\sum_j \Bigl(\frac{\sum_{\{\ell\in {\mathbb Z}: \, 2^\ell > \la/100\}}\beta(2^{-\ell}(\la_j-\tau))}{\la_j^2-\tau^2}
\Bigr)\bigl(\ala(\la_j)-\ala(\tau)\bigr) \,
e_j^0(x)e_j^0(y).  
\notag
\end{align}

Next, let 
$$K_{\tau,\ell_0}(x,y)= \sum_j \frac{\ala(\la_j)-\ala(\tau)}{\la_j-\tau}\frac{\beta_{\ell_0}(\la_j-\tau)}{\la_j+\tau} \, e_j^0(x)e_j^0(y),$$
$$R_{\tau,\ell}(x,y)= \sum_j
\frac{2^{-\ell}\tilde\beta(2^{-\ell}(\la_j-\tau))}
{\la_j+\tau}
\,  e^0_j(x)e^0_j(y),
\quad \text{if } \, \e<2^\ell \le \lambda/100,$$
and
$$R_{\tau,\infty}(x,y)=
\sum_j \Bigl(\frac{\sum_{\{\ell\in {\mathbb Z}: \, 2^\ell > \la/100\}}\beta(2^{-\ell}(\la_j-\tau))}{\la_j^2-\tau^2}
\Bigr) \,
e_j^0(x)e_j^0(y).$$
Also, for $\e<2^\ell \le \la/100$ let
\begin{align*}
K^-_{\tau,\ell}(x,y)&=\sum_j
\frac{2^{-\ell}\tilde \beta(2^{-\ell}(\la_j-\tau))}{\la_j+\tau} \bigl(\ala(\la_j)-1\bigr) \, e^0_j(x)e^0_j(y)
\\ 
K^+_{\tau,\ell}(x,y)&=\sum_j
\frac{2^{-\ell}\tilde \beta(2^{-\ell}(\la_j-\tau))}{\la_j+\tau} \ala(\la_j) \, e^0_j(x)e^0_j(y),
\end{align*}
and, finally,
\begin{align*}
K^-_{\tau,\infty}(x,y)&=
\sum_j \Bigl(\frac{\sum_{\{\ell\in {\mathbb Z}: \, 2^\ell > \la/100\}}\beta(2^{-\ell}(\la_j-\tau))}{\la_j^2-\tau^2}
\Bigr) \bigl(\ala(\la_j)-1\bigr) \,
e_j^0(x)e_j^0(y)
\\
K^+_{\tau,\infty}(x,y)&=
\sum_j \Bigl(\frac{\sum_{\{\ell\in {\mathbb Z}: \, 2^\ell > \la/100\}}\beta(2^{-\ell}(\la_j-\tau))}{\la_j^2-\tau^2}
\Bigr) \ala(\la_j) \,
e_j^0(x)e_j^0(y).
\end{align*}

If $K_\tau$ is as in \eqref{3.2}, our current task,
\eqref{3.1}, is to show that
\begin{equation}\label{3.1'}\tag{3.1$'$}
\Bigl| \sum_{\tau_k \in [\la/2,10\la]}\iint
K_{\tau_k}(x,y)e_{\tau_k}(x)e_{\tau_k}(y) \, V(y)\, dxdy\Bigr|
\le C_V \e^{-1} \la^{n-\frac32}.
\end{equation}

To prove this, we note that we can write
\begin{multline}\label{3.3}
K_\tau(x,y)=K_{\tau,\ell_0}(x,y)
+ \sum_{\{\ell \in {\mathbb Z}: \, \e<2^\ell \le \la/100\}}
K^-_{\tau,\ell}(x,y)+K^-_{\tau,\infty}(x,y)
\\
+ \sum_{\{\ell \in {\mathbb Z}: \, \e<2^\ell \le \la/100\}}
R_{\tau,\ell}(x,y)\bigl(1-\ala(\tau)\bigr)
+R_{\tau,\infty}(x,y) \bigl(1-\ala(\tau)\bigr),
\end{multline}
or
\begin{multline}\label{3.4}
K_\tau(x,y)=K_{\tau,\ell_0}(x,y)
+ \sum_{\{\ell \in {\mathbb Z}: \, \e<2^\ell \le \la/100\}}
K^+_{\tau,\ell}(x,y)+K^+_{\tau,\infty}(x,y)
\\
- \sum_{\{\ell \in {\mathbb Z}: \, \e<2^\ell \le \la/100\}}
R_{\tau,\ell}(x,y)\ala(\tau)
-R_{\tau,\infty}(x,y) \ala(\tau),
\end{multline}
We shall use \eqref{3.3} to handle the summands in 
\eqref{3.1'} with
$\tau=\tau_k\in [\la/2,\la]$ and
\eqref{3.4} to handle those with $\tau=\tau_k\in (\la,10\la]$.

For $\ell \in {\mathbb Z}$ with $\e<2^\ell \le \la/100$, let for $j=0,1,2,\dots$
\begin{equation}\label{3.5}
I^-_{\ell,j}=\bigl(\la-(j+1)2^\ell, \, \la
-j2^\ell\bigr]
\quad \text{and} \quad
I^+_{\ell,j}=\bigl(\la+j2^\ell, \, 
\la+(j+1)2^\ell \, \bigr].
\end{equation}
Then to use the $\delta_\tau$--Lemma (Lemma~\ref{delta}), we shall use the following result whose proof we
momentarily postpone.

\begin{lemma}\label{lemma3.2}
If $\ell \in {\mathbb Z}$, $\e<2^\ell\le \la/100$,
and $j=0,1,2,\dots$, we have for each $N\in {\mathbb N}$
\begin{multline}\label{3.6}
\|K^{\pm}_{\tau,\ell}(\, \cdot\, ,y )\|_{L^2(M)}, \, \,
\|2^\ell \tfrac\partial{\partial\tau}K^{\pm}_{\tau,\ell}(\, \cdot\, ,y )\|_{L^2(M)}
\\
\lesssim \e^{-1/2}\la^{\frac{n-1}2-1}2^{-\ell/2}(1+j)^{-N},
\quad \tau\in I_{\ell,j}^\pm \cap [\la/2,10\la].
\end{multline}
Also, 
\begin{multline}\label{3.7}
\|K_{\tau,\ell_0}(\, \cdot\, ,y )\|_{L^2(M)}, \, \, 
\|\e\cdot\tfrac\partial{\partial\tau}K_{\tau,\ell_0}(\, \cdot\, ,y )\|_{L^2(M)} 
\\
\lesssim \e^{-1}
\la^{\frac{n-1}2-1}(1+j)^{-N}, \quad 
\tau\in I_{\ell_0,j}^\pm \cap [\la/2,10\la],
\end{multline}
\begin{equation}\label{3.8}
\|K^+_{\tau,\infty}( \, \cdot\, ,y )\|_{L^2(M)}, \, \,
\|\la \tfrac\partial{\partial \tau}K^+_{\tau,\infty}( \, \cdot\, ,y)\|_{L^2(M)}
\lesssim \la^{\frac{n}2-2}, \quad \tau\in [\la,10\la],
\end{equation}
and we can write
$$K^-_{\tau,\infty}(x,y)=\tilde K^-_{\tau,\infty}(x,y)
+H^-_{\tau,\infty}(x,y),$$
where for $\tau\in [\la/2,\la]$
\begin{equation}\label{3.9}\begin{split}
\|\tilde K^-_{\tau,\infty}(\, \cdot \, ,y )\|_{L^2(M)}
, \, \, \|\la \tfrac\partial{\partial \tau}
\tilde K^-_{\tau,\infty}(\, \cdot \, ,y )\|_{L^2(M)} \lesssim \la^{\frac{n}2-2}
\\ \\
|H^-_{\tau,\infty}(x,y)|\lesssim
\la^{n-2}h_n(\la d_g(x-y))(1+\la d_g(x,y))^{-N}, 
\end{split}
\end{equation}
where $h_n$ is as in \eqref{1.4}.
Finally,
we also have for $\e< 2^\ell \le \la/100$ and $\tau\in [\la/2,10\la]$
\begin{equation}\label{3.10}
\|R_{\tau,\ell}(\, \cdot\, ,y )\|_{L^2(M)}, \, \, 
\|2^\ell \tfrac\partial{\partial \tau}R_{\tau,\ell}(\, \cdot\, ,y)\|_{L^2(M)}
\lesssim \e^{-1/2}\la^{\frac{n-1}2-1} 2^{-\ell/2},
\end{equation}
and
\begin{equation}\label{3.11}
|R_{\tau,\infty}(x,y)|\lesssim \la^{n-2}h_n(\la d_g(x,y))
\, 
(1+\la d_g(x,y))^{-N}.
\end{equation}
\end{lemma}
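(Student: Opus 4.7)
All the bounds split into two kinds: $L^2$ bounds of the form $\|K(\,\cdot\,,y)\|_{L^2(M)}\lesssim \la^{(n-1)/2-1}\cdot(\text{decay})$ on the one hand, and pointwise bounds of resolvent type on the other. For the $L^2$ bounds, the plan is to exploit orthonormality of $\{e^0_j\}$, which immediately gives
\[
\Bigl\|\sum_j c_j\,e^0_j(\,\cdot\,)\,e^0_j(y)\Bigr\|_{L^2(M)}^{2}=\sum_j |c_j|^2\,|e^0_j(y)|^2,
\]
then apply the local Weyl bound \eqref{freespec}, $\sum_{\la_j\in[k,k+1]}|e^0_j(y)|^2=O(k^{n-1})$, and the rapid decay \eqref{2.16}--\eqref{2.17} of $\ala$ and its derivatives away from $\la$. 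For \eqref{3.6}, \eqref{3.7}, \eqref{3.10}, the coefficient $c_j$ is localized by $\beta$ or $\tilde\beta$ to a shell of size $2^\ell$ around $\tau\sim\la$, contributing a factor $\lesssim 2^\ell\la^{n-1}$ to the Weyl sum. The weight $2^{-2\ell}/(\la_j+\tau)^2\sim 2^{-2\ell}/\la^2$ then produces $\la^{n-3}\,2^{-\ell}$, i.e., the square of the asserted bound.

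The extra $\ala(\la_j)-1$ in $K^-_{\tau,\ell}$ is controlled via \eqref{2.16}: since $\ola(\la_j)=1$ for $\la_j\le\la$, $|\ala(\la_j)-1|\lesssim(1+|\la-\la_j|)^{-N}$ on $\la_j\le \la$, and $\ala(\la_j)$ itself enjoys the same decay for $\la_j>\la$. On $\tau\in I^{\pm}_{\ell,j}$, the support of $\tilde\beta(2^{-\ell}(\la_k-\tau))$ forces $|\la-\la_k|\sim(1+j)2^\ell$, which yields the extra $(1+j)^{-N}$ decay. For $K_{\tau,0}$, the difference quotient $\big(\ala(\la_j)-\ala(\tau)\big)/(\la_j-\tau)$ is $\lesssim \sup |\ala'|$ on an $O(1)$ interval around $\tau$, and \eqref{2.17} supplies the $(1+j)^{-N}$ decay for $\tau\in I^{\pm}_{0,j}$. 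Derivatives in $\tau$ cost at most one factor of $2^{-\ell}$ when they fall on $\tilde\beta$, compensated exactly by the multiplier $2^\ell$ in the statement, while derivatives of the smooth $(\la_j+\tau)^{-1}$, $\ala$, and $\ala'$ give bounds of the same order as the original.

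For \eqref{3.8}, \eqref{3.9}, \eqref{3.11} the cutoff $\sum_{2^\ell>\la/100}\beta(2^{-\ell}(\la_j-\tau))$ forces $|\la_j-\tau|\gtrsim \la$, so $|\la_j^2-\tau^2|\gtrsim \la(\la_j+\la)$. For $K^+_{\tau,\infty}$ the $\ala(\la_j)$ weight kills everything except $\la_j\sim\la$, where the denominator $\lesssim\la^{-4}$ together with the local Weyl bound yields $\la^{n-4}$, i.e., the stated $\la^{n/2-2}$ in $L^2$. For $R_{\tau,\infty}$ the kernel is (after rescaling by $\tau^{-2}$) precisely of the form $\tau^{-2}m(P^0/\tau)$ with a symbol $m$ of order $-2$ treated in Lemma~\ref{freeres}, so \eqref{3.11} follows directly from that lemma. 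Finally, for \eqref{3.9} I decompose
\[
\ala(\la_j)-1 = \bigl(\ala(\la_j)-\ola(\la_j)\bigr) - \mathbf{1}_{\la_j>\la},
\]
the first summand producing $\tilde K^-_{\tau,\infty}$, which is handled exactly as $K^+_{\tau,\infty}$ via \eqref{2.16}, and the second producing $H^-_{\tau,\infty}$, a cutoff resolvent over $\{\la_j>\la\}\cap\{|\la_j-\tau|\gtrsim\la\}$ to which Lemma~\ref{freeres} applies to give the claimed $\la^{n-2}h_n(\la d_g)(1+\la d_g)^{-N}$ bound.

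The main obstacle is the algebra of the decomposition in \eqref{3.9}: one must verify that the delicate transition region $\la_j\approx\la$ is already excluded by the constraint $|\la_j-\tau|\gtrsim\la$ valid for $\tau\in[\la/2,\la]$, so that the high-frequency cutoff resolvent truly inherits the symbol-of-order-$-2$ estimate from Lemma~\ref{freeres}. A secondary point is that the cutoff $\sum_{2^\ell>\la/100}\beta(2^{-\ell}(\la_j-\tau))$ is in the difference $|\la_j-\tau|$ rather than the ratio $\la_j/\tau$ used in Lemma~\ref{freeres}, but for $\tau\sim\la$ the two formulations differ only by a low-frequency tail whose kernel is uniformly bounded and therefore trivially satisfies \eqref{3.11}. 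Derivative bounds throughout are mechanical: $\partial_\tau$ falling on a dyadic factor $\beta(2^{-\ell}(\la_j-\tau))$ brings out $2^{-\ell}$, exactly balanced by the $2^\ell$ multiplier, while $\partial_\tau(\la_j^2-\tau^2)^{-1}\sim 2\tau(\la_j^2-\tau^2)^{-2}$ is controlled by $\la^{-1}$ times the original weight on the support $|\la_j-\tau|\gtrsim\la$, balanced by the $\la$ multiplier in \eqref{3.8}--\eqref{3.9}.
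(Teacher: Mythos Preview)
Your overall strategy matches the paper's: orthonormality of $\{e^0_j\}$ plus the local Weyl bound \eqref{freespec} plus the rapid decay \eqref{2.16}--\eqref{2.17} handle all the $L^2$ estimates, while Lemma~\ref{freeres} supplies the pointwise resolvent bounds. Your treatment of \eqref{3.6}, \eqref{3.7}, \eqref{3.10} is essentially the paper's argument, and your remark on \eqref{3.11}---that the difference cutoff versus the ratio cutoff of Lemma~\ref{freeres} differ only by a compactly supported piece---is also how the paper proceeds (the paper splits $R_{\tau,\infty}=R^0_{\tau,\infty}+\tilde R_{\tau,\infty}$). One minor slip: in \eqref{3.8} you say ``the $\ala(\la_j)$ weight kills everything except $\la_j\sim\la$,'' but in fact $\ala(\la_j)\approx 1$ for all $\la_j\le\la$; the estimate still works because the denominator already gives $|\la_j^2-\tau^2|^{-1}\lesssim\la^{-2}$ uniformly on the support, and $\sum_{\la_j\le\la}|e^0_j(y)|^2\lesssim\la^n$.

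The genuine gap is in your decomposition for \eqref{3.9}. You write $\ala(\la_j)-1=(\ala(\la_j)-\ola(\la_j))-\mathbf 1_{\la_j>\la}$ and put the sharp indicator piece into $H^-_{\tau,\infty}$, then invoke Lemma~\ref{freeres}. But Lemma~\ref{freeres} needs a \emph{smooth} symbol of order $-2$, and your claim that ``the transition region $\la_j\approx\la$ is already excluded by $|\la_j-\tau|\gtrsim\la$'' fails when $\tau$ is near $\la/2$: for $\tau=\la/2$ and $\la_j=\la$, one has $|\la_j-\tau|=\la/2$, so the infinity cutoff is fully ``on,'' and your multiplier jumps from $0$ to $\approx\la^{-2}$ across $\la_j=\la$. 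A sharp spectral cutoff does not produce kernels with the off-diagonal decay $(1+\la d_g)^{-N}$; that decay comes precisely from smoothness of the symbol. The paper avoids this by defining $H^-_{\tau,\infty}$ via the \emph{smooth} cutoff $\eta(\la_j/\tau)$ (with $\eta\equiv0$ on $[0,2]$, $\eta\equiv1$ on $[4,\infty)$), so that $H^-_{\tau,\infty}=-\sum_j\eta(\la_j/\tau)(\la_j^2-\tau^2)^{-1}e^0_je^0_j$ is exactly the kernel of Lemma~\ref{freeres}; the leftover piece carrying $(1-\eta(\la_j/\tau))$ is supported on $\la_j\le 4\tau\le 4\la$ and goes into $\tilde K^-_{\tau,\infty}$, where only the $L^2$ bound $\la^{n/2-2}$ is required, and that follows by orthogonality and Weyl as in your $K^+_{\tau,\infty}$ argument. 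Replacing your sharp cutoff by this smooth one fixes the gap.
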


As before, we are abusing notation a bit.  First, in \eqref{3.6}
we mean that if $K_{\tau,\ell}$ equals $K^+_{\tau,\ell}$ or
$K^-_{\tau,\ell}$ then the bounds in \eqref{3.6} for
$\tau$ in $I_{\ell,j}^+ \cap [\la,10\la]$ or
$I_{\ell,j}^- \cap [\la/2,\la]$, respectively.  Also,
in both the second inequality in \eqref{3.9} and in \eqref{3.11} we mean that the kernels satisfy the bounds when $x$ is sufficiently close to $y$ (so that $d_g(x,y)$ is well-defined) and that they are $O(\la^{-N})$ away from the diagonal.

We shall also need the following lemma for the proof of Proposition~\ref{compprop}.
\begin{lemma}\label{lemma3.3}
Let $I=[a_0, a_0+\gamma]$ be an interval of length $\gamma \le \la$, and assume that for any fixed $\tau \in I \cap [\la/2,10\la]$, $w_\tau(x,y)\in C^1(\mathbb{R}\times M\times M)$ satisfies
\begin{equation}\label{b.1}
\|w_{\tau}(\, \cdot\, ,y )\|_{L^2(M)}, \, \,
\|\gamma \tfrac\partial{\partial\tau}w_{\tau}(\, \cdot\, ,y )\|_{L^2(M)}
\le L,
\end{equation}
for some constant $L$.
Then if $\beta\in C^\infty(\mathbb{R})$ and $V\in L^1(M)$, we have
\begin{multline}\label{b.2}
\Bigl| \sum_{\tau_k\in I\cap[\la/2,10\la]}
\iint w_{\tau_k}(x,y)\beta(\tau_k)e_{\tau_k}(x)e_{\tau_k}(y)
V(y)\, dydx\Bigr| \\
\lesssim \sup_{\tau\in I\cap[\la/2,10\la]}|\beta(\tau)|\cdot  L\,\la^{n/2},
\end{multline}
\end{lemma}
\begin{proof}
For any fixed $y\in M$, by applying Lemma~\ref{delta} with
$\delta=\gamma$, $m(\tau, x)=w_{\tau+a}(x,y)$ and $a_k=\beta(\tau_k)e_{\tau_k}(y)$, we have
\begin{align*}
\Bigl| &\sum_{\tau_k\in I\cap[\la/2,10\la]}
\iint w_{\tau_k}(x,y)\beta(\tau_k)e_{\tau_k}(x)e_{\tau_k}(y)
V(y)\, dydx\Bigr|
\\
&\le \|V\|_{L^1}
\cdot \sup_y \Bigl\|
\sum_{\tau_k\in I\cap[\la/2,10\la]}
 w_{\tau_k}(x,y)e_{\tau_k}(x)e_{\tau_k}(y)
\Bigr\|_{L^1(dx)}
\notag
\\
&\le \|V\|_{L^1} \cdot \sup_y
\Bigl( \|w_{a_0}(\, \cdot \, ,y)\|_{L^2(M)}+\int_{0}^\gamma\bigl\|\tfrac\partial{\partial \tau}w_{s+a_0}(\, \cdot \, y)\|_{L^2(M)} \, ds
\Bigr) \notag
\\
&\qquad \qquad \qquad \qquad \qquad \times \bigl(\sum_{\tau_k\in I\cap [\la/2,10\la]}
|\beta(\tau_k)e_{\tau_k}(y)|^2\bigr)^{1/2}
\notag
\\
&\lesssim \sup_{\tau\in I\cap[\la/2,10\la]}|\beta(\tau)|\cdot  L\,\la^{n/2}.
\notag
\end{align*}
In the second to last inequality we used \eqref{b.1} and the fact that, 
by \eqref{2.36}, 
\begin{equation}\label{3.13}
\sum_{\tau_k\in [\la/2,10\la]} |e_{\tau_k}(y)|^2 \lesssim
\la^{n}, \quad \la\ge 1.
\end{equation}
\end{proof}

\begin{proof}[Proof of Proposition~\ref{compprop}]
First, if we apply Lemma~\ref{lemma3.3} with $w_\tau(x,y)=K^{\pm}_{\tau,\ell}(x,y )$, $\gamma=2^\ell$ and $\beta(\tau)\equiv 1$, by \eqref{3.6} 
\begin{equation}\label{b.4}
\Bigl| \sum_{\tau_k\in I^\pm_{\ell,j}\cap (\la,10\la]}\iint K^\pm_{\tau_k,\ell}(x,y)e_{\tau_k}(x)e_{\tau_k}(y)
V(y)\, dydx\Bigr|  \lesssim \e^{-1/2} \la^{n-\frac 32}2^{-\ell/2} \cdot (1+j)^{-N}
\end{equation}
If we sum over $j=0,1,2,\dots$, we see that for $\e< 2^\ell \le \la/100$, \eqref{b.4} yields
\begin{multline}\label{3.14}
\Big| \sum_{\la <\tau_k\le 10\la}
\iint K^+_{\tau_k,\ell}(x,y) e_{\tau_k}(x)e_{\tau_k}(y) 
\, V(y)\, dxdy\Bigr|
\\
+\Big| \sum_{\la/2 \le \tau_k\le\la}
\iint K^-_{\tau_k,\ell}(x,y) e_{\tau_k}(x)e_{\tau_k}(y) 
\, V(y)\, dxdy\Bigr|
\lesssim \e^{-1/2} \la^{n-\frac 32}2^{-\ell/2}.
\end{multline}
If we take $w_\tau(x,y)=K_{\tau,\ell_0}(x,y )$, $\gamma=\e$ and $\beta(\tau)\equiv 1$ in Lemma~\ref{lemma3.3}, this argument
also gives
\begin{multline}\label{3.15}
\Bigl| \sum_{\la/2 \le \tau_k\le \la}
\iint K_{\tau_k,\ell_0}(x,y) e_{\tau_k}(x)e_{\tau_k}(y) \, 
V(y)\, dx dy\Bigr|
\\
+ \Bigl| \sum_{\la < \tau_k\le 10\la}
\iint K_{\tau_k,\ell_0}(x,y) e_{\tau_k}(x)e_{\tau_k}(y) \, 
V(y)\, dx dy\Bigr|
\lesssim \e^{-1}\la^{n-\frac32}.
\end{multline}
Similarly, if we use Lemma~\ref{lemma3.3}
along with \eqref{3.8} we find that
\begin{multline}\label{3.16}
\Bigl| \sum_{\la< \tau_k\le 10\la}
\iint K_{\tau_k,\infty}^+(x,y) e_{\tau_k}(x)e_{\tau_k}(y) \, 
V(y)\, dx dy\Bigr|
\\
\lesssim \la^{\frac{n}2-2} \|V\|_{L^1} \bigl(\sum_{\tau_k 
\in [\la/2,10\la]} |e_{\tau_k}(y)|^2\bigr)^{1/2}
\lesssim \la^{n-2},
\end{multline}
using \eqref{3.13} for the last inequality.

Next, since $R_{\tau,\ell}$ enjoys the bounds in \eqref{3.10}, we can use Lemma~\ref{lemma3.3} with $w_\tau(x,y)=R_{\tau,\ell}(x,y )$, $\gamma=2^\ell$ and $\beta(\tau)=\ala(\tau)$ 
to see that for $\e< 2^\ell \le \la/100$ we
have
\begin{align*}
\Bigl| \sum_{\tau_k\in I^+_{\ell,j}\cap (\la,10\la]}
&\iint R_{\tau_k,\ell}(x,y)\ala(\tau_k)e_{\tau_k}(x)
e_{\tau_k}(y) V(y)\, dxdy\Bigr|
\\
&\lesssim \|V\|_{L^1}\cdot 2^{-\ell/2}\la^{\frac{n-1}2-1}
\sup_y\bigl(\sum_{\tau_k\in I^+_{\ell,j}\cap
(\la,10\la]} |\ala(\tau_k)e_{\tau_k}(y)|^2\bigr)^{1/2} 
\\
&\lesssim \e^{-1/2}\la^{n-\frac32}2^{-\ell/2}\|V\|_{L^1}\cdot (1+j)^{-N},
\end{align*}
since $\ala(\tau_k)=O((1+j)^{-N})$ if $\tau_k
\in I^+_{\ell,j}$.  Summing over this bound over $j$
of course yields
\begin{equation}\label{3.17}
\Bigl|\sum_{\la<\tau_k\le 10\la}
\iint R_{\tau_k,\ell}(x,y)
\ala(\tau_k)e_{\tau_k}(x)e_{\tau_k}(y) V(y)\, dxdy\Bigr|
\lesssim \e^{-1/2}\la^{n-\frac32}2^{-\ell/2}.
\end{equation}
The same argument gives
\begin{multline}\label{3.18}
\Bigl|\sum_{\la/2\le\tau_k\le \la}
\iint R_{\tau_k,\ell}(x,y)
\bigl(1-\ala(\tau_k)\bigr)e_{\tau_k}(x)e_{\tau_k}(y) V(y)\, dxdy\Bigr|
 \lesssim \e^{-1/2}\la^{n-\frac32}2^{-\ell/2}.
\end{multline}
Also, by \eqref{3.11} we have
$$\sup_y \int \sup_{\la/2\le \tau\le 10\la}|R_{\tau,\infty}(x,y)|\, dx\lesssim \la^{-2},
$$
and since \eqref{3.13} yields
$\sum_{\tau_k\le 10\la} |e_{\tau_k}(x)e_{\tau_k}(y)|
\lesssim \la^n$, we have
\begin{equation}\label{3.19}\begin{split}
\Bigl|\sum_{\tau_k\in (\la,10\la]}
\iint R_{\tau_k,\infty}
(x,y) \ala(\tau_k)e_{\tau_k}(x)e_{\tau_k}(y) V(y)
\, dxdy\Bigr|\lesssim \la^{n-2}\|V\|_{L^1}
\\
\Bigl|\sum_{\tau_k\in [\la/2,\la]}\iint
R_{\tau_k,\infty}
(x,y) \bigl(1-\ala(\tau_k)\bigr)e_{\tau_k}(x)e_{\tau_k}(y) V(y)
\, dxdy\Bigr|\lesssim \la^{n-2}\|V\|_{L^1}
\end{split}
\end{equation}

If $H^-_{\tau,\infty}$ is as in \eqref{3.9} this argument
also gives us
$$\Bigl|\sum_{\la/2\le \tau_k\le \la} \iint
H^-_{\tau_k,\infty}(x,y) e_{\tau_k}(x)e_{\tau_k}(y)
\, V(y)\, dxdy\Bigr|\lesssim
\la^{n-2}\|V\|_{L^1},$$
while the proof of \eqref{3.16} along with the first part of \eqref{3.9} yields
$$\Bigl| \sum_{\la/2 \le \tau_k\le \la}
\iint \tilde K_{\tau_k,\infty}^-(x,y) e_{\tau_k}(x)e_{\tau_k}(y) \, 
V(y)\, dx dy\Bigr|
\lesssim \la^{n-2}\|V\|_{L^1}.$$
Since $K_{\tau,\infty}=\tilde K_{\tau,\infty}+
H^-_{\tau,\infty}$, we deduce
\begin{equation}\label{3.20}
\Bigl| \sum_{\la/2 \le \tau_k\le \la}
\iint  K_{\tau_k,\infty}^-(x,y) e_{\tau_k}(x)e_{\tau_k}(y) \, 
V(y)\, dx dy\Bigr|
\lesssim \la^{n-2}\|V\|_{L^1}.
\end{equation}

We now have assembled all the ingredients for the proof
of \eqref{3.1'}.  If we use \eqref{3.14}, \eqref{3.15},
\eqref{3.18}, \eqref{3.19} and \eqref{3.20} along with
\eqref{3.3}, we conclude that the analog of \eqref{3.1'}
must be valid where the sum is taken over
$\tau_k\in [\la/2,\la]$. 
We similarly obtain the analog of \eqref{3.1'} where the sum is taken over $\tau_k\in (\la,10\la]$ from \eqref{3.4} along with
\eqref{3.14}, \eqref{3.15}, \eqref{3.16}, \eqref{3.17} and \eqref{3.19}.

From this, we deduce that \eqref{3.1'} must be valid,
which finishes the proof of Proposition~\ref{compprop}.
\end{proof}

To finish the present task we need to prove Lemma~\ref{3.2}.

\begin{proof}[Proof of Lemma~\ref{lemma3.2}] 
Since, as defined in \eqref{1.9}, $\{e^0_j\}_{j=1}^\infty$ are orthonormal bases for $L^2(M)$. By $L^2$ orthogonality, we have
\begin{equation}\label{b.5}
\Bigl(\int_M\Bigl|\sum_i a(\la_i,\tau)e_i^0(x)e_i^0(y)\Bigr|^2 dx\Bigr)^{1/2}=\bigl(\sum_{i} |a(\la_i,\tau)e^0_i(y)|^2\bigr)^{1/2},\,\,\forall\,\,\,y\in M.
\end{equation}

To prove the first inequality we note that if $\tau\in I^\pm_{\ell,j}\cap[\la/2,10\la]$ with $\beta(2^{-\ell}(\la_i-\tau))\ne 0$, then
$|\la_i-\tau|\le 2^{\ell+1}$, $\la_i,\tau\approx \la$, and, in this case,
we also have $\ala(\la_i)-1=O((1+|j|)^{-N})$ if $\tau\in I^-_{\ell,j}$ and $\ala(\la_i)=O((1+|j|)^{-N})$ if
$\tau\in I^+_{\ell,j}$.  Therefore, by \eqref{b.5} and \eqref{freespec}, we have for $\e<2^\ell \le \la/100$
\begin{align*}
\bigl\|K^\pm_{\tau,\ell}(\, \cdot \, ,y)\bigr\|_{L^2(M)}&\lesssim
(1+|j|)^{-N}2^{-\ell}\la^{-1} \, \bigl(\sum_{\{i: \, |\la_i-\tau|\le 2^{\ell+1}\}} |e^0_i(y)|^2\bigr)^{1/2}
\\
&\lesssim (1+|j|)^{-N}2^{-\ell}\la^{-1}\bigl(\sum_{\{\mu\in \{{\mathbb N}: \, |\mu-\tau|\le 2^{\ell+1}\}}
\mu^{n-1}\bigr)^{1/2}
\\
&\le \begin{cases}(1+|j|)^{-N}2^{-\ell}\la^{\frac{n-1}2-1}, \,\,\,\text{if}\,\,\, 2^{\ell}< 1\\ 
(1+|j|)^{-N}2^{-\ell/2}\la^{\frac{n-1}2-1}, \,\,\,\text{if}\,\,\, 2^{\ell}\ge 1,
\end{cases}
\end{align*}
which gives us the first part of \eqref{3.6} since $\max\{2^{-\ell}, 2^{-\ell/2}\}\le \e^{-1/2}2^{-\ell/2}$.  In the second inequality, we used \eqref{freespec}.  The other inequality
in \eqref{3.6} follows from this argument since
$$\frac\partial{\partial \tau}
\frac{\tilde \beta(2^{-\ell}(\la_i-\tau))}{\la_i+\tau}=O(2^{-\ell}\la^{-1}),
$$
due to the fact that we are assuming that $\e<2^\ell \le \la/100$.

This argument in the proof of  \eqref{3.6} also gives us \eqref{3.7} if we use the fact that
$\tau\to (\ala(\tau)-\ala(\mu))/(\tau^2-\mu^2)$ is smooth if we define it as in \eqref{2.23} when
$\tau=\mu$ (which is consistent with \eqref{2.20'}) and use the fact that
$$\partial_\tau^k \bigl(\beta_{\ell_0}(\la_i-\tau) (\ala(\la_i)-\ala(\tau))/(\la_i-\tau)\bigr)=O(\e^{-1-k}(1+|j|)^{-N}), \, \, k=0,1,
\, \, \tau\in I_{\ell_0,j}^\pm,$$
and the fact that, if this expression is nonzero, we must have $|\la_i-\tau|\le 2\e$.

To prove \eqref{3.8} we use the fact that for $k=0,1$ we have for $\tau\in (\la,10\la]$
$$\Bigl| \, \Bigl(\frac\partial{\partial \tau}\Bigr)^k
\Bigl(\frac{\sum_{\{\ell\in {\mathbb N}: \, 2^\ell > \la/100\}}\beta(2^{-\ell}(\la_i-\tau))}{\la_i^2-\tau^2}
\Bigr)  \ala(\la_i)\, \Bigr|
\lesssim
\begin{cases}
\la^{-2-k} \quad \text{if } \, \, \la_i\le \la
\\
\la^{-2-k}(1+\la_i-\la)^{-N} \quad \text{if } \, \, \la_i>\la.
\end{cases}
$$
Thus for $k=0,1$, by \eqref{b.5}
\begin{align*}
\bigl\| (\la \partial_\tau)^k \, K^+_{\tau,\infty}(\, \cdot \, ,y)\bigr\|_{L^2(M)}
&\lesssim \la^{-2}
\Bigl(\, \sum_{\la_i\le \la} |e^0_i(y)|^2+\sum_{\la_i>\la}(1+\la_i-\la)^{-N}|e^0_i(y)|^2 \,
\Bigr)^{1/2}
\\
&\lesssim \la^{-2+\frac{n}2},
\end{align*}
as desired if $N>2n$, using \eqref{freespec} again.

Next we turn to the bounds in \eqref{3.9} for $K^-_{\tau,\infty}$.  To handle this, let
$\eta$ be as in Lemma~\ref{freeres} and put
\begin{align*}
H^-_{\tau,\infty}(x,y)&=-\sum_i  \Bigl(\frac{\sum_{\{\ell\in {\mathbb N}: \, 2^\ell > \la/100\}}\beta(2^{-\ell}(\la_i-\tau))}{\la_i^2-\tau^2}
\Bigr)
\eta(\la_i/\tau) \, e^0_i(x)e^0_i(y)
\\
&=-\sum_i \frac{\eta(\la_i/\tau)}{\la_i^2-\tau^2} e^0_i(x)e^0_i(y),
\end{align*}
assuming, as we may, that $\la\gg 1$.  The last equality comes from the properties
of our Littlewood-Paley bump function, $\beta$.  We then conclude from Lemma~\ref{freeres} that $H^-_{\tau,\infty}$ satisfies the bounds
in \eqref{3.9}.  If we then set
\begin{multline*}
\tilde K^-_{\tau,\infty}(x,y)=\sum_i \Bigl(\frac{\sum_{\{\ell\in {\mathbb N}: \, 2^\ell > \la/100\}}\beta(2^{-\ell}(\la_i-\tau))}{\la_i^2-\tau^2}
\Bigr) \ala(\la_i) \, e^0_i(x)e^0_i(y)
\\
-\sum_i \Bigl(\frac{\sum_{\{\ell\in {\mathbb N}: \, 2^\ell > \la/100\}}\beta(2^{-\ell}(\la_i-\tau))}{\la_i^2-\tau^2}
\Bigr) \, \bigl(\eta(\la_i/\tau)\bigr) \, e^0_i(x)e^0_i(y),
\end{multline*}
we have $K^-_{\tau,\infty}=\tilde K^-_{\tau,\infty} +H^-_{\tau,\infty}$, and, also, by the proof of \eqref{3.8}, 
$\tilde K^-_{\tau,\infty}$ satisfies the bounds in \eqref{3.9}.

It just remains to prove the bounds in \eqref{3.10} for the $R_{\tau,\ell}(x,y)$ and that in
\eqref{3.11} for $R_{\tau,\infty}(x,y)$.  The former just follows from the proof of \eqref{3.6}.

To prove the remaining inequality, \eqref{3.11}, we note that if $\eta$ is as above and we set
$$\tilde R_{\tau,\infty}(x,y)=\sum_i \frac{\eta(\la_i/\tau)}{\la^2_i-\tau^2} \, e^0_i(x)e^0_i(y),$$
then, by Lemma~\ref{freeres}, $\tilde R_{\tau,\infty}$ satisfies the bounds in \eqref{3.11}.  Also, we have
$$R_{\tau,\infty}(x,y)=R^0_{\tau,\infty}(x,y)+\tilde R_{\tau,\infty}(x,y),$$
if
$$R^0_{\tau,\infty}(x,y)=\sum_i \bigl(1-\eta(\la_i/\tau)\bigr)
 \Bigl(\frac{\sum_{\{\ell\in {\mathbb N}: \, 2^\ell > \la/100\}}\beta(2^{-\ell}(\la_i-\tau))}{\la_i^2-\tau^2}
\Bigr)\, e^0_i(x)e^0_i(y),$$
(again using the properties of $\beta$),
and, since the proof of Lemma~\ref{freeres} shows that for $\tau\in [\la/2,10\la]$ we have
$$|R^0_{\tau,\infty}(x,y)|\lesssim \tau^{n-2} \bigl(1+\tau d_g(x,y)\bigr)^{-N}
\lesssim \la^{n-2} \bigl(1+\la d_g(x,y)\bigr)^{-N},$$
we conclude that \eqref{3.11}
must be valid, which completes the proof.
\end{proof}

\subsection*{Handling the contribution of relatively large frequencies of $H_V$}

In this section we shall handle relatively large 
frequencies of $H_V$ by proving the following.

\begin{proposition}\label{largeprop}
Let $V\in L^1(M)$ with $V^{-}\in
 {\mathcal K}(M)$, and $\ala(\tau)$ be defined as in \eqref{2.15} with $\e$ satisfying \eqref{a.1}. Then we have
\begin{multline}\label{3.21}
\Bigl| \, \sum_{j}
\sum_{\{k: \, \tau_k >10\lambda\}}\int_M \int_M
\frac{\ala(\la_j)-\ala(\tau_k)}{\la_j^2-\tau_k^2} e_j^0(x)e_j^0(y)V(y)e_{\tau_k}(x)
e_{\tau_k}(y) \, dx dy\, \Bigr| 
\\
\le C_V \, 
\la^{n-2}(\log \la)^{1/2},
\end{multline}
for some constant $C_V$ depending on $V$ which is independent of $\e$.
\end{proposition}

To prove \eqref{3.21} fix
\begin{equation}\label{3.22}
\Psi\in C^\infty_0((1/2,2)), \quad \text{with } \, \,
\Psi(s)=1, \, \, s\in [3/4,5/4].
\end{equation}  

To proceed, assume that $\tau_k>10\la$.  Since, by the
mean value theorem and \eqref{2.17}
$$\frac{\ala(\la_j)-\ala(\tau_k)}{\la_j-\tau_k}
=O(\tau_k^{-\sigma}) \, \, \forall \, \sigma,
\, \, \text{if } \, \la_j\in (\tau_k/2,2\tau_k), 
\, \, \tau_k>10\la,$$
by \eqref{freespec} and \eqref{3.13}, to prove \eqref{3.21}
it suffices to show that
\begin{multline}\label{3.21'}\tag{3.21$'$}
\Bigl| \, \sum_{j}
\sum_{\{k: \, \tau_k >10\lambda\}}\iint
\frac{\ala(\la_j)-\ala(\tau_k)}{\la_j^2-\tau_k^2} 
\, \bigl(1-\Psi(\la_j/\tau_k)\bigr)
e_j^0(x)e_j^0(y)V(y)e_{\tau_k}(x)
e_{\tau_k}(y) \, dx dy\, \Bigr| 
\\
\lesssim \|V\|_{L^1(M)} \, 
\la^{n-2}(\log \la)^{1/2},
\end{multline}
since 
\begin{multline*}
\Bigl| \, \sum_{j}
\sum_{\{k: \, \tau_k >10\lambda\}}\iint
\frac{\ala(\la_j)-\ala(\tau_k)}{\la_j^2-\tau_k^2} 
\, \Psi(\la_j/\tau_k)
e_j^0(x)e_j^0(y)V(y)e_{\tau_k}(x)
e_{\tau_k}(y) \, dx dy\, \Bigr| 
\\
\lesssim \la^{-\sigma} \|V\|_{L^1(M)}, \quad \forall \, \sigma.
\end{multline*}

As $\ala(\tau_k)=O(\tau_k^{-\sigma})$ for all $\sigma\in {\mathbb N}$  for $\tau_k>10\la$
and, by Lemma~\ref{freeres},
$$\Bigl|\sum_j \frac{(1-\Psi(\la_j/\tau_k))}{\la_j^2-\tau_k^2}
e^0_j(x)e^0_j(y)\Bigr|
\lesssim 
\begin{cases}
\tau_k^{n-2}+(d_g(x,y))^{2-n}, \quad n\ge3
\\
\log\bigl(2+1/(\tau_k d_g(x,y))\bigr), \quad n=2,
\end{cases}
$$
the analog of \eqref{3.21'} where we replace
$(\ala(\la_j)-\ala(\tau_k))$ by $\ala(\tau_k)$ is trivial.
Consequently, we would have \eqref{3.21'} and consequently
Proposition~\ref{largeprop} if we could show that
\begin{multline}\label{3.21''}\tag{3.21$''$}
\Bigl| \, \sum_{j}
\sum_{\{k: \, \tau_k >10\lambda\}}\iint
\frac{(1-\Psi(\la_j/\tau_k))}{\la_j^2-\tau_k^2} 
\, \ala(\la_j) \, 
e_j^0(x)e_j^0(y)V(y)e_{\tau_k}(x)
e_{\tau_k}(y) \, dx dy\, \Bigr| 
\\
\lesssim \|V\|_{L^1(M)} \, 
\la^{n-2}(\log \la)^{1/2}.
\end{multline}

If $1-\Psi(\la_j/\tau_k)\ne 0$ we have $\la_j\ne \tau_k$, and then can write
$$
\frac{1}{\tau_k^2-\la_j^2}=\tau_k^{-2}
+\tau_k^{-2}\bigl(\la_j/\tau_k\bigr)^2+
\cdots + \tau_k^{-2}\bigl(\la_j/\tau_k\bigr)^{2N-2}
\\
+(\la_j/\tau_k)^{2N}\frac{1}{\tau_k^2-\la_j^2}.
$$
As a result, we would have \eqref{3.21''} if we could
choose $N\in {\mathbb N}$ so that we have
\begin{multline}\label{3.23}
\Bigl| \iint \sum_j
\la_j^{2\ell}
\ala(\la_j)e_j^0(x)e_j^0(y)
\Bigl(\sum_{\tau_k>10\la}\tau_k^{-2-2\ell}
(1-\Psi(\la_j/\tau_k)) \, e_{\tau_k}(x)e_{\tau_k}(y)\Bigr) \, 
V(y) \, dxdy\Bigr|
\\
\lesssim \|V\|_{L^1(M)}
\la^{n-2} (\log\la)^{1/2}, \quad \ell=0,\dots,N-1,
\end{multline}
as well as
\begin{multline}\label{3.24}
\Bigl|\sum_j\sum_{\tau_k>10\la}\iint
\frac{(1-\Psi(\la_j/\tau_k))}{\la_j^2-\tau_k^2}
(\la_j)^{2N}\ala(\la_j) e^0_j(x)e^0_j(y)V(y)\tau_k^{-2N}
e_{\tau_k}(x)e_{\tau_k}(y) \, dxdy\Bigr|
\\
\lesssim \la^{n-2}  \|V\|_{L^1(M)}.
\end{multline}

To handle \eqref{3.23} we start with a trivial reduction.
We note that if $\tau_k>10\la$, then by 
\eqref{2.16}, \eqref{3.13} and \eqref{3.22}
\begin{align*}
\bigl|\ala(\la_j)\sum_{\tau_k>10\la}\Psi(\la_j/\tau)
\tau_k^{-2-2\ell}e_{\tau_k}(x)e_{\tau_k}(y)|&\lesssim
\bigl|\ala(\la_j)\sum_{\tau_k\in (\la_j/2,2\la_j)}
\tau_k^{-2-2\ell}e_{\tau_k}(x)e_{\tau_k}(y)\bigr|
\\
&\lesssim \la_j^{-\sigma}\sum_{\tau_k\approx \la_j}
\tau_k^{-\sigma}|\tau_k^{-2-2\ell}e_{\tau_k}(x)e_{\tau_k}(y)|
\\
&\lesssim \la_j^{n-2\ell-2\sigma},
\end{align*}
for any $\sigma$.
If $\sigma>n$, by \eqref{freespec} this yields
\begin{multline*}
\Bigl| \iint \sum_j
\la_j^{2\ell}
\ala(\la_j)e_j^0(x)e_j^0(y)
\Bigl(\sum_{\tau_k>10\la}\tau_k^{-2-2\ell}
\Psi(\la_j/\tau_k) \, e_{\tau_k}(x)e_{\tau_k}(y)\Bigr) \, 
V(y) \, dxdy\Bigr|
\\
\lesssim \|V\|_{L^1(M)},
\end{multline*}
which means that in order to prove \eqref{3.23} it
suffices to show that
\begin{multline}\label{3.23'}\tag{3.23$'$}
\Bigl| \iint \bigl((P^0)^{2\ell}\ala(P^0)\bigr)(x,y)
\Bigl(\sum_{\tau_k>10\la}\tau_k^{-2-2\ell}
\, e_{\tau_k}(x)e_{\tau_k}(y)\Bigr) \, 
V(y) \, dxdy\Bigr|
\\
\lesssim \|V\|_{L^1(M)}
\la^{n-2} (\log\la)^{1/2}, \quad \ell=0,\dots,N-1,
\end{multline}
since
$$
\sum_j \la_j^{2\ell}\ala(\la_j)e_j^0(x)e_j^0(y)=
\bigl((P^0)^{2\ell}\ala(P^0)\bigr)(x,y).
$$

To prove this, in certain cases, we shall rewrite
the expression inside the absolute value in the left side of \eqref{3.23'}
slightly.
Specifically, we can split it into the following two terms
\begin{multline}\label{3.25}
\iint \bigl((P^0)^{2\ell}\ala(P^0)\bigr)(x,y)
\Bigl(\sum_{\tau_k\ge1}\tau_k^{-2-2\ell}
\, e_{\tau_k}(x)e_{\tau_k}(y)\Bigr) \, 
V(y) \, dxdy
\\
-\iint \bigl((P^0)^{2\ell}\ala(P^0)\bigr)(x,y)
\Bigl(\sum_{\tau_k\le 10\la}\tau_k^{-2-2\ell}
\, e_{\tau_k}(x)e_{\tau_k}(y)\Bigr) \, 
V(y) \, dxdy
\\
=I+II, \quad \text{if } \, \ell \le (n-4)/4 \, \, 
\text{and } \, n\ge4.
\end{multline}
If $n\le 3$ we shall not split things up in this way, and,
instead, just deal with the expression in the left
side of \eqref{3.23'} directly.

Note that if $n\ge5$ and $\ell\le (n-4)/4$
\begin{align*}
|I|&=\Bigl|\iint \bigl((P^0)^{2\ell}\ala(P^0)\bigr)(x,y)
\, \bigl(H_V\bigr)^{-1-\ell}(x,y) \, V(y)\, dxdy\Bigr|
\\
&\le\iint_{d_g(x,y)\le \la^{-1}}
+\iint_{d_g(x,y)\ge \la^{-1}}
\Bigl( \, 
\bigl|\bigl((P^0)^{2\ell}\ala(P^0)\bigr)(x,y)\bigr|\,
\, \bigl|\bigl(H_V\bigr)^{-1-\ell}(x,y)\bigr| \, |V(y)|\, dxdy
\\
&\lesssim \iint_{d_g(x,y)\le \la^{-1}}
\la^{n+2\ell} \, (d_g(x,y))^{-n+2+2\ell} \, |V(y)| \, dxdy
\\
&\qquad+\|V\|_{L^1}\cdot \sup_y
\bigl(\int_{d_g(x,y)\ge \la^{-1}} \bigl|\bigl((P^0)^{2\ell}\ala(P^0)\bigr)(x,y)\bigr|^2 \, dx\bigr)^{1/2}
\\
&\qquad\qquad\qquad \qquad\qquad\times
\bigl(\int_{d_g(x,y)\ge \la^{-1}} \bigl| \bigl(
d_g(x,y)\bigr)^{-2(n-2-2\ell)} \, dx\bigr)^{1/2}
\\
&\lesssim \|V\|_{L^1}\cdot \la^{n+2\ell-(2-2\ell)}
+\|V\|_{L^1}\cdot \bigl(\la^{\frac{n}2+2\ell}
\cdot \la^{\frac{n}2-2-2\ell}\bigr)
\\
&=\la^{n-2}\|V\|_{L^1},
\end{align*}
which is better than the bounds in \eqref{3.23'}.
Here we used Lemma~\ref{heat} to bound $(H^{-1-\ell}_V)(x,y)$ (and our momentary assumption $\ell\le (n-4)/4$).
In the second inequality we also used Schwarz's inequality, while in the second inequality and the second to last step we also used Lemma~\ref{ala}.

If $n=4$ than the requirement in \eqref{3.25} forces
$\ell=0$.  In this case, if we repeat the above arguments
we obtain slightly worse bounds, i.e.,
$$|I|\lesssim \la^{n-2}(\log\la)^{1/2} \|V\|_{L^1},$$
with the $\log\la$ factor coming from the fact that when
$n=4$ we have
$$\int_{d_g(x,y)\ge \la^{-1}}(d_g(x,y))^{-4}\, dx
\approx \log \la.$$
On the other hand, this bound is in agreement with  the one posited in \eqref{3.23'}.

We still need to handle the second term, $II$, in \eqref{3.25}.  To do this we shall again
use Lemma~\ref{ala} and \eqref{3.13} along with Schwarz's inequality to deduce that
\begin{align*}
| II |  &\le \|V\|_{L^1}\cdot \sup_y \Bigl( \, 
\|(P^0)^{2\ell} \ala(P^0)(\, \cdot, \, y)\|_{L^2}\cdot 
\|\sum_{\tau_k\le 10\la} \tau^{-2-2\ell}_k e_{\tau_k}(\, \cdot\, )e_{\tau_k}(y)\|_{L^2}\Bigr)
\\
&\lesssim \|V\|_{L^1} \cdot \la^{\frac{n}2+2\ell} \cdot \bigl(\sum_{\tau_k\le 10\la}
\tau_k^{-4-4\ell}|e_{\tau_k}(y)|^2\bigr)^{1/2}
\\
&\lesssim   \|V\|_{L^1} \cdot \la^{\frac{n}2+2\ell} \cdot 
\bigl(\sum_{ \{j\in {\mathbb N}: \, 2^j\le 10\la\} } 2^{-j(4+4\ell)} 2^{nj}\bigr)^{1/2}
\\
&\lesssim \|V\|_{L^1} \cdot \la^{\frac{n}2+2\ell} \cdot \la^{-2-2\ell+\frac{n}2} 
\\
&=\|V\|_{L^1} \cdot \la^{n-2},
\end{align*}
assuming in the last step $\ell<(n-4)/4$.  In the remaining case covered in \eqref{3.25}
where $n\ge4$ and $\ell=(n-4)/4$ (forcing $n$ to be a multiple of $4$), as was the case 
for $\ell=0$ and $n=4$, the bound is somewhat worse and we instead get, in this case,
$$| II |\lesssim \la^{n-2}(\log\la)^{1/2} \|V\|_{L^1},$$
which still better than that of our current goal, \eqref{3.21}.

Since we have obtained favorable estimates for $I$ and $II$ in \eqref{3.25}, we have shown that \eqref{3.23'} is valid
when $n\ge 4$ and $\ell \le (n-4)/4$.  For the remaining cases where $n=2, 3$ and $0\le \ell \le N-1$
is arbitrary or $(n-4)/4<\ell \le N-1$ for $n\ge4$, we shall just repeat the argument that we used
to control $II$.  We have not specified $N$; however, to get the other inequality, \eqref{3.24}, that is needed to obtain our current goal \eqref{3.21}, $N$ will have to be chosen to be larger than $(n-4)/4$.

In these remaining cases for \eqref{3.23'} if we argue as above we find that the left side of \eqref{3.23'} is dominated by
\begin{align*}
\|V\|_{L^1}\cdot &\sup_y \Bigl(\|(P^0)^{2\ell} \ala(P^0)(\, \cdot, \, y)\|_{L^2}\cdot 
\|\sum_{\tau_k> 10\la} \tau^{-2-2\ell}_k e_{\tau_k}(\, \cdot\, )e_{\tau_k}(y)\|_{L^2}\Bigr)
\\
&\lesssim \|V\|_{L^1}\cdot \la^{\frac{n}2+2\ell} \cdot \bigl(\sum_{\tau_k> 10\la}
\tau_k^{-4-4\ell}|e_{\tau_k}(y)|^2\bigr)^{1/2}
\\
&\lesssim   \|V\|_{L^1} \cdot \la^{\frac{n}2+2\ell} \cdot
\bigl(\sum_{ \{j\in {\mathbb N}: \, 2^j> 10\la\} } 2^{-j(4+4\ell)} 2^{nj}\bigr)^{1/2}
\\
&\lesssim \|V\|_{L^1} \cdot \la^{\frac{n}2+2\ell} \cdot \la^{-2-2\ell+\frac{n}2} 
\\
&= \la^{n-2}\|V\|_{L^1},
\end{align*}
using the fact that our current conditions ensure that $4+4\ell >n$.  This completes the proof
of \eqref{3.23'} and hence \eqref{3.23}.

To finish this subsection we need to show that we can fix $N\in {\mathbb N}$ sufficiently large so that
\eqref{3.24} is valid.  As we mentioned before we shall specify our $N>(n-4)/4$ in a moment.

To prove \eqref{3.24} for large enough $N$, here too it will be convenient to split matters into two cases.  First,
let us deal with the sum in \eqref{3.24} where $\tau_k\ge \la^2$.  We can handle this case using trivial methods if
$N$ is large enough.  In fact, by using 
Schwarz's inequality,
Lemma~\ref{ala} and orthogonality, we see that for $\tau_k\ge \la^2$ we
have the uniform bounds
\begin{align*}
\int\bigl| \, \sum_j   
&\frac{(1-\Psi(\la_j/\tau_k))}{\la_j^2-\tau_k^2}
(\la_j)^{2N}\ala(\la_j) e^0_j(x)e^0_j(y) \, \bigr| \, dx
\\
&\lesssim \bigl\| \sum_j
\frac{(1-\Psi(\la_j/\tau_k))}{\la_j^2-\tau_k^2}
(\la_j)^{2N}\ala(\la_j) e^0_j(\, \cdot \, )e^0_j(y)\bigr\|_{L^2}
\\
&\lesssim \bigl\| (P^0)^{2N} \ala(P^0)(\, \cdot ,y)\bigr\|_{L^2} \lesssim \la^{\frac{n}2+2N}.
\end{align*}
This does not present a problem since if $N$ is large, since, by \eqref{3.13} and the Cauchy-Schwartz inequality,
$$\sum_{\tau_k\ge \la^2} \tau_k^{-2N}|e_{\tau_k}(x)e_{\tau_k}(y)|\lesssim
\sum_{ \{j \in {\mathbb N}: \, 2^j\ge \la^2\} } 2^{-2Nj}2^{nj}
\lesssim \la^{-4N}\la^{2n},$$
if $N>n$.  Using these two inequalities we deduce that
\begin{multline*}
\Bigl|\sum_j\sum_{\tau_k\ge \la^2}\iint
\frac{(1-\Psi(\la_j/\tau_k))}{\la_j^2-\tau_k^2}
(\la_j)^{2N}\ala(\la_j) e^0_j(x)e^0_j(y)V(y)\tau_k^{-2N}
e_{\tau_k}(x)e_{\tau_k}(y) \, dxdy\Bigr|
\\
\lesssim \la^{\frac{n}2+2N} \la^{-4N+2n}  \|V\|_{L^1(M)} < \|V\|_{L^1},
\end{multline*}
if we assume, as we may, that $N=2n$.

Based on this, we would be done with handling relatively large frequencies of $H_V$ if we could show that
\begin{multline}\label{3.24'}\tag{3.24$'$}
\Bigl|\sum_j\sum_{10\la<\tau_k<\la^2}\iint
\frac{(1-\Psi(\la_j/\tau_k))}{\la_j^2-\tau_k^2}
(\la_j)^{2N}\ala(\la_j) e^0_j(x)e^0_j(y)V(y)\tau_k^{-2N}
e_{\tau_k}(x)e_{\tau_k}(y) \, dxdy\Bigr|
\\
\lesssim \la^{n-2}  \|V\|_{L^1(M)}, \quad \text{if } \, \, N=2n.
\end{multline}

To this end, let $\Phi\in C^\infty(\R_+)$ satisfy $\Phi(s)=1$, $s\le 3/2$ and $\Phi(s)=0$, $s\ge2$.  Then
if $\tau_k\ge 10\la$, it follows that
$$\Phi(\la_j/\la)\bigl(1-\Psi(\la_j/\tau_k)\bigr)=\Phi(\la_j/\la),$$
and also for all $\sigma\in {\mathbb N}$
$$\ala(\la_j)\frac{(1-\Phi(\la_j/\la))}{\la^2_j-\tau_k^2}\bigl(1-\Psi(\la_j/\tau_k)\bigr)=O(\tau_k^{-\sigma}),
\, \, \, \text{if } \, \, 10\la\le \tau_k\le \la^2.$$
Thus, by an earlier argument, modulo $O(\la^{-\sigma}\|V\|_{L^1})$ $\forall \, \sigma\in {\mathbb N}$, the left
side of \eqref{3.24'} agrees with the expression where we replace $(1-\Psi(\la_j/\tau_k))$ with
$\Phi(\la_j/\la)$.  Therefore since $(\la_j^2-\tau_k^2)^{-1}=-(1-(\la_j/\tau_k)^2)^{-1} \cdot \tau^{-2}_k$, we
would have \eqref{3.24'} if we could show that
\begin{multline}\label{3.24''}\tag{3.24$''$}
\Bigl|\sum_j\sum_{10\la<\tau_k<\la^2}\iint
\frac{\Phi(\la_j/\la)}{1-(\la_j^2/\tau_k)^2}
(\la_j)^{2N}\ala(\la_j) e^0_j(x)e^0_j(y)V(y)\tau_k^{-2N-2}
e_{\tau_k}(x)e_{\tau_k}(y) \, dxdy\Bigr|
\\
\lesssim \la^{n-2}  \|V\|_{L^1(M)}, \quad N=2n.
\end{multline}

Since the left side is dominated by $\|V\|_{L^1}$ times
\begin{multline}\label{3.26}
\sup_y\Bigl|  \int \Bigl(\, \sum_j \frac{\Phi(\la_j/\la)}{1-\tau_k^{-2}\la_j^2}
(\la_j)^{2N}\ala(\la_j) e^0_j(x)e^0_j(y)\, \Bigr) 
\\
\times
\bigl(\, 
\sum_{10\la<\tau_k<\la^2}
\tau_k^{-2N-2}
e_{\tau_k}(x)e_{\tau_k}(y)\, \bigr) \, dx \Bigr|,
\end{multline}
it suffices to show that this expression is 
$O(\la^{n-2})$.

To do so, we shall appeal to the $\delta_\tau$--Lemma, Lemma~\ref{delta}.  We set for a given $y\in M$
$$m(s,x)=\sum_j  \frac{\Phi(\la_j/\la)}{1-s^2\la_j^2}
(\la_j)^{2N}\ala(\la_j) e^0_j(x)e^0_j(y), \quad s\in [0,1/10\la].$$
Then, since
\begin{equation}\label{3.27}
\Bigl|  \frac{\Phi(\la_j/\la)}{1-s^2\la_j^2} \, \Bigr| \lesssim 1,
\end{equation}
and
\begin{equation}\label{3.28}
\Bigl|\frac\partial{\partial s}  \Bigl( \, \frac{\Phi(\la_j/\la)}{1-s^2\la_j^2}\, \Bigr) \, \Bigr| \lesssim s\la^2,
\end{equation}
if $s\in [0,1/10\la]$, it follows from orthogonality and Lemma~\ref{ala} that
\begin{equation}\label{3.29}
\|m(0,\, \cdot\, )\|_{L^2(M)}
+\int_0^{1/10\la}\bigl\| \tfrac\partial{\partial s}m(s,\, \cdot \, )\bigr\|_{L^2(M)} \, ds
=O(\la^{\frac{n}2+2N}).
\end{equation}
Consequently, by Lemma~\ref{delta}, \eqref{3.26} is dominated by
\begin{align*}
\la^{\frac{n}2+2N}\bigl\|\sum_{10\la <\tau_k<\la^2}
\tau_k^{-2N-2} e_{\tau_k}(\, \cdot \, )e_{\tau_k}(y)\bigr\|_{L^2}
&=\la^{\frac{n}2+2N}  \bigl(\, |\sum_{10\la <\tau_k<\la^2}
\tau_k^{-4N-4}|e_{\tau_k}(y)|^2 \bigr)^{1/2}
\\
&\lesssim \la^{\frac{n}2+2N}
\bigl( \sum_{ \{j\in {\mathbb N}: \, 2^j>10\la\} } 2^{-(4N+4)j} 2^{nj} \bigr)^{1/2}
\\
&\lesssim \la^{\frac{n}2+2N}\cdot \la^{-2N-2}\cdot \la^{\frac{n}2}=\la^{n-2},
\end{align*}
using \eqref{3.13} in the second to last step and the fact that $N=2n$ in the final one.
Thus, the quantity in \eqref{3.26} is $O(\la^{n-2})$,
which, by the above, yields
\eqref{3.24''} and finishes the proof of Proposition~\ref{largeprop}.

\subsection*{Handling the contribution of relatively small frequencies of $H_V$}

In this subsection we shall handle relatively small frequencies of $H_V$ and prove the following result.

\begin{proposition}\label{smallprop}
Let $V\in L^1(M)$ with $V^{-}\in
 {\mathcal K}(M)$, and $\ala(\tau)$ be defined as in \eqref{2.15} with $\e$ satisfying \eqref{a.1}. Then we have
\begin{multline}\label{3.30}
\Bigl| \, \sum_{j}
\sum_{ \tau_k<\lambda/2}\int_M \int_M
\frac{\ala(\la_j)-\ala(\tau_k)}{\la_j^2-\tau_k^2} e_j^0(x)e_j^0(y)V(y)e_{\tau_k}(x)
e_{\tau_k}(y) \, dx dy\, \Bigr| 
\\
\le C_V \la^{n-2}
,
\end{multline}
for some constant $C_V$ depending on $V$ which is independent of $\e$.
\end{proposition}

If we combine this with Proposition~\ref{compprop} and
Proposition~\ref{largeprop} which handle frequencies which are comparable to $\la$ and large compared to $\la$, respectively, we obtain Proposition~\ref{mainprop}, which by the arguments in \S 2, yield our main result, 
Proposition~\ref{abstract}.

\begin{proof}[Proof of Proposition~\ref{smallprop}]
As in the earlier cases, we shall first handle a trivial case.  To do so, we note that, by \eqref{2.17} and the mean value theorem,
$$\frac{\ala(\la_j)-\ala(\tau)}{\la_j^2-\tau^2}
=O(\la^{-\sigma}), \, \, \forall \, \sigma\in 
{\mathbb N}, \quad \text{if } \, \, 
1\le \tau\le \la/2 \, \, \text{and} \, \,
\la_j\le 7\la/8.
$$
Also, by \eqref{freespec} and \eqref{3.13}
\begin{equation}\label{3.31}\sum_{\la_j\le \la}|e^0_j(x)e^0_j(y)|, \, \,
\sum_{\tau_k< \la/2} |e_{\tau_k}(x) e_{\tau_k}(y)|
\lesssim \la^n.
\end{equation}
To use these and make our first reduction 
fix $a\in C^\infty(\R_+)$ satisfying
$$a(s)=0, \quad s\le 3/4 \quad \text{and } \, \,
a(s)=1, \, \, s\ge 7/8.$$
Using the preceding inequalities we see that in order to prove
\eqref{3.30} it suffices to show that
\begin{multline}\label{3.30'}\tag{3.30$'$}
\Bigl| \, \sum_{j}
\sum_{ \tau_k<\lambda/2}\int_M \int_M
\frac{\ala(\la_j)-\ala(\tau_k)}{\la_j^2-\tau_k^2}
a(\la_j/\la) e_j^0(x)e_j^0(y)V(y)e_{\tau_k}(x)
e_{\tau_k}(y) \, dx dy\, \Bigr| 
\\
\le C_V \la^{n-2} \|V\|_{L^1(M)}
,
\end{multline}
due to the fact that the difference between the quantities inside the absolute values in the left side of \eqref{3.30} and that of \eqref{3.30'} is 
$O(\la^{-\sigma}\|V\|_{L^1})$ for all $\sigma$.

For the next reduction, note that the proof of Lemma~\ref{freeres} yields
$$\Bigl|\, 
\sum_j
\frac{a(\la_j/\la)}{\la_j^2-\tau_k^2}
e_j^0(x)e_j^0(y)\, \Bigr|
\lesssim 
\begin{cases}(d_g(x,y))^{2-n}, \quad n\ge 3,
\\
\log\bigl(2+(d_g(x,y))^{-1}\bigr), \quad n=2,
\end{cases}
$$
if $1\le\tau_k\le \la/2$.
Based on this and the second part of \eqref{3.31}
and the fact that $1-\ala(\tau_k)=O(\la^{-\sigma})$ for
all $\sigma$ when $1\le \tau_k\le \la/2$, we easily see that
\begin{multline*}
\Bigl| \, \sum_{j}
\sum_{ \tau_k<\lambda/2}\int_M \int_M
\frac{1-\ala(\tau_k)}{\la_j^2-\tau_k^2}
a(\la_j/\la) e_j^0(x)e_j^0(y)V(y)e_{\tau_k}(x)
e_{\tau_k}(y) \, dx dy\, \Bigr| 
\\
\le \la^{-\sigma} \|V\|_{L^1(M)}, \quad \forall \, \sigma
\in {\mathbb N}.
\end{multline*}
Consequently, we would have \eqref{3.30'} if we could show
that
\begin{multline}\label{3.30''}\tag{3.30$''$}
\Bigl| \, \sum_{j}
\sum_{ \tau_k<\lambda/2}\int_M \int_M
\frac{a(\la_j/\la)}{\la_j^2-\tau_k^2} \, \bigl(\ala(\la_j)-1\bigr)\, 
e_j^0(x)e_j^0(y)V(y)e_{\tau_k}(x)
e_{\tau_k}(y) \, dx dy\, \Bigr| 
\\
\le C_V \la^{n-2} \|V\|_{L^1(M)}.
\end{multline}

We need to make one final reduction before we can
appeal to the $\delta_\tau$--Lemma, Lemma~\ref{delta}.
For this, let $\eta$ be as in Lemma~\ref{freeres}, i.e.,
$\eta\in C^\infty(R_+)$ with $\eta(s)=0$ on
$s\le 2$ and $\eta(s)=1$, $s>4$.  It then follows
that
$$\eta(s)a(s)=\eta(s).$$
Consequently, we can write the quanity inside the
absolute value in the left side of \eqref{3.30''} as
\begin{multline*}
\sum_{j}
\sum_{ \tau_k<\lambda/2}\int_M \int_M
\frac{\eta(\la_j/\la)}{\la_j^2-\tau_k^2} \, \bigl(\ala(\la_j)-1\bigr)\, 
e_j^0(x)e_j^0(y)V(y)e_{\tau_k}(x)
e_{\tau_k}(y) \, dx dy
\\
+ \sum_{j}
\sum_{ \tau_k<\lambda/2}\int_M \int_M
\frac{a(\la_j/\la)(1-\eta(\la_j/\la))}{\la_j^2-\tau_k^2} \, \bigl(\ala(\la_j)-1\bigr)\, 
e_j^0(x)e_j^0(y)V(y)e_{\tau_k}(x)
e_{\tau_k}(y) \, dx dy 
\\
= I + II.
\end{multline*}
Therefore, in order to prove \eqref{3.30''}, it suffices to show that both $|I|$ and $|II|$ are dominated by the
right side of \eqref{3.30''}.

We can easily handle $I$ without appealing to the
$\delta_\tau$--lemma.
Indeed since $\ala(\la_j)=O(\la_j^{-\sigma})$ for all
$\sigma$ if $\eta(\la_j/\la)\ne0$, we see that
\eqref{freespec} yields
$$\sum_j \frac{\eta(\la_j/\la)}{\la_j^2-\tau_k^2} \, \bigl(\ala(\la_j)-1\bigr)\, 
e_j^0(x)e_j^0(y) =
-\sum_j \frac{\eta(\la_j/\la)}{\la_j^2-\tau_k^2} \, 
e_j^0(x)e_j^0(y) +O(\la^{-\sigma}), \quad \forall \,
\sigma.$$
Consequently, by the second part of \eqref{3.31} we would have the desired bounds for $I$ if we could show that
\begin{equation}\label{3.32}\Bigl|\iint \sum_{\tau_k< \la/2} R_{\tau_k}(x,y)
e_{\tau_k}(x)e_{\tau_k}(y) \, V(y)\, dxdy\Bigr|
\lesssim \la^{n-2} \|V\|_{L^1},
\end{equation}
where
$$R_{\tau_k}(x,y)= \sum_j \frac{\eta(\la_j/\la)}{\la_j^2-\tau_k^2} \, 
e_j^0(x)e_j^0(y).$$
To use this we note that the proof of Lemma~\ref{freeres}
implies that
$$\sup_{1\le \tau_k< \la/2}|R_{\tau_k}(x,y)|
\le C_0 \la^{n-2}h_n(\la d_g(x,y))(1+\la d_g(x,y))^{-\sigma}, 
\quad \forall \, \sigma,$$
and, therefore,
$$\sup_y \int \sup_{1\le \tau_k< \la/2}|R_{\tau_k}(x,y)| \, dx \lesssim \la^{-2}.$$
Since, we always have $\tau_k\ge 1$ by \eqref{1.7},
by the second part of \eqref{3.31} we have 
$$\sup_y\int\Bigl| \sum_{\tau_k<\la/2} R_{\tau_k}(x,y)e_{\tau_k}(x)
e_{\tau_k}(y)\Bigr|\, dx
\lesssim \la^n \cdot \sup_y\int \sup_{1\le \tau_k< \la/2}|R_{\tau_k}(x,y)|\, dx \lesssim \la^{n-2},$$
which clearly yields \eqref{3.32}.

Since we have the desired estimate for $I$ above, it only remains to prove the corresponding estimate for $II$.  
For this, let 
\begin{multline*}
m(s,x,y)=\sum_j \frac{b(\la_j/\la)}{\la_j^2-s^2} \, \bigl(\ala(\la_j)-1\bigr)\, 
e_j^0(x)e_j^0(y),
\\\text{with } \, b(s)=
a(s)(1-\eta(s))\in C^\infty((3/4,4)) .
\end{multline*}
We then can rewrite this desired bound for $II$ as follows
\begin{equation*}
\Bigl| \iint 
\sum_{\tau_k<\la/2} m(\tau_k,x,y) e_{\tau_k}(x) e_{\tau_k}(y) \, V(y)\, dx dy\Bigr|
\lesssim \la^{n-2}\|V\|_{L^1}.
\end{equation*}
Just as the last step in the proof of Proposition~\ref{largeprop} was to establish \eqref{3.26}, the final step here would be to show that
\begin{equation} \label{3.33}
\sup_y  \int \bigl|\sum_{\tau_k<\la/2} m(\tau_k,x,y) e_{\tau_k}(x) e_{\tau_k}(y)\bigr| \, dx
\lesssim \la^{n-2}. 
\end{equation}

To prove this we shall argue as in the very end of the
last subsection and appeal to the $\delta_\tau$--Lemma~\ref{delta} with the $\delta$ there equal
to $\la/2$.
We first note that by \eqref{freespec} 
and the fact that $b(\la_j/\la)\ne 0$ implies $\la_j/ \la\in [3/4,4]$.  Consequently,
$$\Bigl| \, \Bigl(\frac{\partial}{\partial s}\Bigr)^\ell \, \Bigl(
 \frac{b(\la_j/\la)}{\la_j^2-s^2} \Bigr) \Bigr| \le C
\la^{-2}(s\la^{-2})^\ell, \, \, \ell =0,1,
\quad \text{if } \, 1\le s\le \la/2.$$
Using this and the support properties $b$ we can
easily see that by the proof of Lemma~\ref{ala} 
that \eqref{freespec} and orthogonality yields
for $\ell=0,1$
$$\big\|(\tfrac\partial{\partial s})^\ell m(s,\, \cdot \, ,y)\bigr\|_{L^2(M)}\le C_0
\la^{\frac{n}2-2} (s\la^{-2})^\ell,  
\quad \text{if } \, \, y\in M, \, \, 0\le s\le \la/2.$$

Consequently,
$$\sup_y \Bigl(\|m(1,\, \cdot \, ,y)\|_{L^2(M)}
+\int_1^{\la/2} \big\|(\tfrac\partial{\partial s}) m(s,\, \cdot \, ,y)\bigr\|_{L^2(M)}\Bigr)
\lesssim \la^{\frac{n}2-2}.$$
By Lemma~\ref{delta} and the second part of \eqref{3.31} we deduce from this
that the left side of \eqref{3.33} is dominated by
$$\la^{\frac{n}2-2} \sup_y \bigl(\sum_{\tau_k<\la/2}
|e_{\tau_k}(y)|^2 \, \bigr)^{1/2}\lesssim \la^{n-2},$$
which completes the proof. 
\end{proof}

\subsection*{Proof of Lemma~\ref{smallinterval}}
\quad\newline

To prove \eqref{2.4},  let us fix a non-negative function $\chi\in \mathcal{S}(\mathbb{R})$ satisfying:
\begin{equation}\label{4.2}
 \chi(\tau)>1, \hspace{1mm} |\tau|\leq 1 \hspace{2mm} \text{and} \hspace{2mm} \hat{\chi}(t)=0, \hspace{1mm} |t|\geq 1/2.
\end{equation}
Then it suffices to show that
\begin{equation}\label{4.3}
\int_M \sum\limits_{k=1}\limits^{\infty} \chi(\e^{-1}(\la-\tau_k))|e_{\tau_k}(x)|^2 dx \le C_V(\e\la^{n-1}+\e^{-1}\la^{n-\frac32})
\end{equation}

By Euler's formula we can rewrite the left side of \eqref{4.3} as 
\begin{equation}\label{4.3'}
\frac1\pi\int_{-\infty}^{\infty}\int_M \e\hat\chi(\e t)e^{it\la}\sum\limits_{k=1}\limits^{\infty}\cos t\tau_k |e_{\tau_k}(x)|^2 dx dt
\end{equation}
minus
$$ \int_M \sum\limits_{k=1}\limits^{\infty} \chi(\e^{-1}(\la+\tau_k))|e_{\tau_k}(x)|^2 dx.
$$
Since $\chi\in \mathcal{S}(\mathbb{R})$, the last term is rapidly decreasing in $\la$ with bounds independent of $\e\le 1$. Thus
we just need to show that the expression in \eqref{4.3'} is bounded by the right side of \eqref{4.3}.

On the other hand, by \eqref{a.3}, it is straightforward to see that 
\begin{equation}\label{a.5}
\int_M \sum\limits_{j=1}\limits^{\infty} \chi(\e^{-1}(\la-\la_j))|e^0_{j}(x)|^2 dx \le C\e\la^{n-1},
\end{equation}
as well as 
\begin{equation}\label{a.6}
 \int_M \sum\limits_{j=1}\limits^{\infty} \chi(\e^{-1}(\la+\la_j))|e^0_{j}(x)|^2 dx \le C_N \la^{-N}, \,\,\forall\,\,N.
\end{equation}
By using Euler's formula again, we have
\begin{equation}\label{4.4}
\frac1\pi\int_{-\infty}^{\infty}\int_M \e\hat\chi(\e t)e^{it\la}\sum\limits_{j=1}\limits^{\infty}\cos t\la_j |e^0_{j}(x)|^2 dx dt  \le C\e\la^{n-1}.
\end{equation}


By repeating the previous arguments using Lemma \ref{triglemma} and Duhamel's principle,  we can rewrite the difference of \eqref{4.3'} and \eqref{4.4} as
\begin{equation}\label{4.5}
 \, \sum_{j,k}\int_M \int_M
\frac{\tilde \chi_\la(\la_j)-\tilde \chi_\la(\tau_k)}{\la_j^2-\tau_k^2} e_j^0(x)e_j^0(y)V(y)e_{\tau_k}(x)
e_{\tau_k}(y) \, dx dy,
\end{equation}
where 
\begin{equation}\label{4.5'}
\tilde \chi_\la(\tau)=\frac1\pi\int_{-\infty}^{\infty} \e\hat\chi(\e t)e^{it\la}\cos \tau \hspace{1mm}  dt= \chi(\e^{-1}(\la-\tau))+ \chi(\e^{-1}(\la+\tau))
\end{equation}
and similarly, we interpret
\begin{equation}\label{4.6}
\frac{\tilde \chi_\la(\tau)-\tilde \chi_\la(\mu)}{\tau^2-\mu^2}=\frac{\tilde \chi_\la^\prime(\tau)}{2\tau},  \hspace{2mm} \text{if}\,\, \,\,\tau=\mu
\end{equation}
Since $\chi\in \mathcal{S}(\mathbb{R})$, we have 
\begin{equation}\label{4.7}
\bigl(\tfrac{d}{d\tau}\bigr)^j \,  \tilde \chi_\la(\tau)
= O\bigl(\e^{-j}(1+\e^{-1}|\la-\tau|)^{-N}\bigr)
\, \, \, \forall \, N, \quad
\text{if } \, j=0,1,2,3,\dots.
\end{equation}

Thus, the proof of Lemma~\ref{smallinterval} would be complete if we could prove the following
\begin{multline}\label{4.8}
\Bigl| \, \sum_{j,k}\int_M \int_M
\frac{\tilde \chi_\la(\la_j)-\tilde \chi_\la(\tau_k)}{\la_j^2-\tau_k^2} e_j^0(x)e_j^0(y)V(y)e_{\tau_k}(x)
e_{\tau_k}(y) \, dx dy \Bigr|
\le C_{V}\e^{-1} \, 
\la^{n-\frac32},
\end{multline}

The proof of \eqref{4.8} is completely analogous to the proof of Proposition~\ref{mainprop}, since the properties of the function $\ala(\tau)$ we used in the proof of \eqref{2.24} are all satisfied by 
the function $\tilde \chi_\la(\tau)$ as well, e.g., \eqref{2.16}, \eqref{2.17} and Lemma~\ref{ala}. We skip the proof of \eqref{4.8} here for the sake of brevity.

\newsection{Improved reminder term estimates for tori}

To prove Theorem~\ref{torusthm}, we will first establish a simpler variant of the first
part of the theorem, \eqref{1.15}, under the stronger assumption that
$V\in L^2(M)$, $V^{-}\in{\mathcal K}(M)$.  After presenting this model argument, we shall see how we can modify
the argument to prove Theorem~\ref{torusthm}.  In all cases,
%
the
main strategy is the same as in the proof of \eqref{1.13} and \eqref{1.14}. That is, we 
need to utilize the standard known result when $V\equiv 1$.  To that end, let us recall the following:
\begin{proposition}\label{standtorus}
If  ${\mathbb T}^n=\Rn/{\mathbb Z}^n$ denotes the standard torus with the flat metric and $N^0(\la)$ denotes the Weyl counting function for $H^0$, then
\begin{equation}\label{4.19}
N^0(\la)=(2\pi)^{-n}\omega_n \mathrm{Vol}_g(M) \, \la^n
+O(\la^{n-1-\frac{n-1}{n+1}}).
\end{equation}
\end{proposition}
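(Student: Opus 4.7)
The plan is to reduce \eqref{4.19} to the classical lattice point counting problem on $\Rn$. On $\mathbb{T}^n = \Rn/\mathbb{Z}^n$ the eigenfunctions of $H^0 = -\Delta_g + 1$ are $e^{2\pi i k \cdot x}$ for $k \in \mathbb{Z}^n$, with eigenvalues $4\pi^2 |k|^2 + 1$. Setting $R = \tfrac{1}{2\pi}\sqrt{\la^2 - 1}$, this identifies $N^0(\la)$ with the number of lattice points in the closed ball of radius $R$. Since $\omega_n R^n = (2\pi)^{-n}\omega_n\la^n + O(\la^{n-2})$ and $\mathrm{Vol}_g(\mathbb{T}^n) = 1$, the proposition reduces to the Hlawka--Sierpi\'nski bound
$$\#\{k\in \mathbb{Z}^n : |k| \le R\} = \omega_n R^n + O\bigl(R^{n(n-1)/(n+1)}\bigr),$$
using the identity $n-1 - \tfrac{n-1}{n+1} = \tfrac{n(n-1)}{n+1}$.

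First I would regularize the indicator. Let $\chi_r$ denote the indicator of $\{|x|\le r\}$ in $\Rn$. Fix a non-negative $\phi \in \Coi(\Rn)$ with $\int\phi = 1$ supported in the unit ball, set $\phi_\delta(x) = \delta^{-n}\phi(x/\delta)$, and define the sandwich $f_\delta^{\pm} = \chi_{R\pm\delta} * \phi_\delta$, so that $f_\delta^- \le \chi_R \le f_\delta^+$ pointwise. The difference $f_\delta^+ - f_\delta^-$ is supported in a $2\delta$-neighborhood of the sphere $|x| = R$, whose Lebesgue measure is $O(R^{n-1}\delta)$; consequently both $\sum_{k\in\mathbb{Z}^n} f_\delta^\pm(k)$ differ from the lattice count $\sum_{k\in\mathbb{Z}^n}\chi_R(k)$ by $O(R^{n-1}\delta)$.

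Next I would apply Poisson summation to $\sum_{k\in\mathbb{Z}^n} f_\delta^\pm(k) = \sum_{m\in\mathbb{Z}^n}\widehat{f_\delta^\pm}(m)$. The $m = 0$ term produces $\omega_n(R\pm\delta)^n = \omega_n R^n + O(R^{n-1}\delta)$. For $m\ne 0$, one uses the classical formula $\widehat{\chi_r}(\xi) = r^{n/2}|\xi|^{-n/2}J_{n/2}(2\pi r|\xi|)$ together with the large-argument Bessel asymptotics to obtain
$$|\widehat{\chi_r}(\xi)| \lesssim r^{(n-1)/2}|\xi|^{-(n+1)/2}, \qquad r|\xi|\ge 1.$$
Combined with the rapid decay of $\widehat\phi$, which truncates the sum effectively to $|m| \lesssim \delta^{-1}$, this yields
$$\Bigl|\sum_{m\ne 0}\widehat{f_\delta^\pm}(m)\Bigr| \lesssim R^{(n-1)/2}\sum_{0<|m|\lesssim\delta^{-1}}|m|^{-(n+1)/2} \lesssim R^{(n-1)/2}\delta^{-(n-1)/2}.$$

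Finally I would balance the two error contributions by setting $R^{n-1}\delta \asymp R^{(n-1)/2}\delta^{-(n-1)/2}$, i.e.\ $\delta \asymp R^{-(n-1)/(n+1)}$, producing a combined error of size $R^{n(n-1)/(n+1)}$. Translating back via $R = \tfrac{1}{2\pi}\sqrt{\la^2-1}$ then gives \eqref{4.19}. The decisive ingredient, and the main obstacle to improving beyond the trivial $O(R^{n-1})$ estimate, is the sharp Bessel decay bound for $\widehat{\chi_r}$: the exponent $(n+1)/2$ is a stationary phase consequence of the non-vanishing Gaussian curvature of the sphere, and it is precisely this geometric input that produces the gain $\tfrac{n-1}{n+1}$ in the error exponent.
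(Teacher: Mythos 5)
Your task here is different from elsewhere in the paper: the authors do not prove Proposition~\ref{standtorus} at all, but simply quote it as the classical lattice-point theorem of Hlawka (the $n=2$ case going back to Sierpi\'nski/van der Corput), so your proposal amounts to reproving that classical result, and it follows the standard route correctly. The reduction to counting $\{k\in{\mathbb Z}^n: |k|\le R\}$ with $R=\tfrac1{2\pi}\sqrt{\la^2-1}$ is right (the $O(\la^{n-2})$ discrepancy between $\omega_nR^n$ and $(2\pi)^{-n}\omega_n\la^n$ is dominated by $\la^{n(n-1)/(n+1)}$ since $n-2<\tfrac{n(n-1)}{n+1}$), and the mollification--Poisson summation--Bessel decay scheme with $\delta\asymp R^{-(n-1)/(n+1)}$ produces exactly Hlawka's exponent. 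One sentence should be repaired: the claim that $\sum_k f^\pm_\delta(k)$ differs from $\sum_k\chi_R(k)$ by $O(R^{n-1}\delta)$ does \emph{not} follow merely from the fact that $f^+_\delta-f^-_\delta$ is supported in an annulus of Lebesgue measure $O(R^{n-1}\delta)$; a set of small measure can contain many lattice points, and estimating the number of lattice points in a thin annulus is essentially the problem being solved, so as stated the inference is circular. It is also unnecessary: from the sandwich $f^-_\delta\le\chi_R\le f^+_\delta$ you already have $\sum_k f^-_\delta(k)\le\#\{k:|k|\le R\}\le\sum_k f^+_\delta(k)$, and applying Poisson summation to each side separately (main term $\omega_n(R\pm\delta)^n=\omega_nR^n+O(R^{n-1}\delta)$, nonzero frequencies $O(R^{(n-1)/2}\delta^{-(n-1)/2})$ via $|\widehat{\chi_r}(\xi)|\lesssim r^{(n-1)/2}|\xi|^{-(n+1)/2}$ and the rapid decay of $\widehat\phi(\delta m)$) gives the two-sided bound directly; optimizing $\delta$ then yields \eqref{4.19}. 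With that small rearrangement your argument is complete and supplies a self-contained proof of a statement the paper handles only by citing \cite{Hlawka}.
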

The result in \eqref{4.19} is due to Hlwaka \cite{Hlawka} for any $n\geq 2$, which means that \eqref{a.1} holds with $\e=\la^{-\frac{n-1}{n+1}}$ in the case of standard torus. Also
\eqref{4.19} implies that 
\begin{equation}\label{4.19'}
\int_M \sum_{\la_j\in [\la,\la+\e)}|e_j^0(x)|^2dx=
O(\la^{n-1-\frac{n-1}{n+1}}), \,\,\,\forall\,\,\la\ge1,
\end{equation}
which is a special case of \eqref{a.3} with $\e=\la^{-\frac{n-1}{n+1}}$.

 However, 
to obtain the desired bounds for as in Theorem~\ref{torusthm}, we need to modify the earlier arguments since the reminder terms on
right sides of \eqref{a.2} are too large if $\e=\la^{-\frac{n-1}{n+1}}$.  To do this, we begin with the following Proposition.

 \begin{proposition}\label{torus1}
 Let $a=\frac{n-1}{n+1}$, ${\mathbb T}^n=\Rn/{\mathbb Z}^n$ denote the standard torus with the flat metric and $\{e_{\tau_k}\}$ be eigenfunctions of the operator $H_V$, with $V\in L^2(M)$, $V^{-}\in{\mathcal K}(M)$.  Given \eqref{4.19}, if we assume that for all $\la>1$,
\begin{equation}\label{4.24}
\int_M \sum\limits_{\tau_k\in[\la,\la+\la^{-a}]}|e_{\tau_k}(x)|^2 dx \le L\,\la^{n-1-b},
\end{equation}
for some constants $L$ and $b$ with $-1\leq b\leq a$. Then,
\begin{equation}\label{4.25}
\int_M \sum\limits_{\tau_k\in[\la,\la+\la^{-a}]}|e_{\tau_k}(x)|^2 dx \le C_VL\, \la^{n-2+\frac{a-b}{2}} \log \la+C\la^{n-1-a}.
\end{equation}
 \end{proposition}
 
To put this in perspective, note that the conclusion here is reminiscent to how we used \eqref{2.36} to prove the
 $O(\la^{n-\frac32})$ error bounds in \eqref{3.1}.  Indeed, by \eqref{2.35} we have \eqref{4.24} with
 $a=0$ and $b=-1$, and, in this case, the first $\la$-factor in the right side of \eqref{4.25} is
 $\la^{n-2+\frac{a-b}2}=\la^{n-\frac32}$. \eqref{4.25} also represents an improvement over the error bounds in \eqref{3.1} since if we assume $\e=\la^{-a}$, the power on $\e$ on the right side of \eqref{4.25} 
 is $-1/2$ instead of $-1$.
 
 Here the constant $C_V$ in \eqref{4.25} depends on $\|V\|_{L^2(M)}$, which is necessary in the proof of Lemma~\ref{lemma4.7} below. It also depends the implicit constant in \eqref{2.34}, since the proof of \eqref{4.25} will use heat kernel estimates involving $H_V$.
 
Before proving this Proposition, let us present a simple but useful corollary.
 We note that, by 
\eqref{2.36}, if $V\in L^1(M)$, $V^{-}\in{\mathcal K}(M)$,
then for all $x\in M$
\begin{equation}\label{4.26}
 \sum\limits_{\tau_k\in[\la,\la+\la^{-a}]}|e_{\tau_k}(x)|^2 \le  \sum\limits_{\tau_k\in[\la, 2\la]}|e_{\tau_k}(x)|^2 \le C_V\la^n.
\end{equation}
So \eqref{4.24} is true for $b=-1$, and note that every time we apply the Proposition, we would have \eqref{4.24} for a larger value of $b$. 
Consequently, we can obtain the following: 

\begin{corr}\label{err}
Let $V\in L^2(M)$, $V^{-}\in{\mathcal K}(M)$, ${\mathbb T}^n=\Rn/{\mathbb Z}^n$ denotes the standard torus with the flat metric, and $\{e_{\tau_k}\}$ are eigenfunctions of the operator $H_V$. Then given \eqref{4.19}, we have for all $\la>1$
\begin{equation}\label{4.27}
\int_M \sum\limits_{\tau_k\in[\la,\la+\la^{-a}]}|e_{\tau_k}(x)|^2 dx \le 
C_V \la^{n-1-a},
\end{equation}
where $C_V$ is a constant depend on $V$, and $a=\frac{n-1}{n+1}$.
\end{corr}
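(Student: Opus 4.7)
The proof is a bootstrap iteration of Proposition~\ref{torus1}. The starting hypothesis \eqref{4.24} holds with $b_0 = -1$, which is exactly the trivial bound \eqref{4.26} (itself a consequence of the dyadic spectral cluster bound \eqref{2.36}). The plan is to feed this into Proposition~\ref{torus1} repeatedly and track how the resulting exponent improves at each step, stopping once the exponent drops below $n-1-a$.

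Fix a small $\varepsilon > 0$, to be chosen shortly, and define the recursion
\begin{equation*}
b_0 = -1, \qquad b_{k+1} = 1 - \tfrac{a - b_k}{2} - \varepsilon.
\end{equation*}
Suppose inductively that \eqref{4.24} holds at level $b_k$ with $-1 \le b_k \le a$. Applying Proposition~\ref{torus1} and then using $\log\la \lesssim_\varepsilon \la^{\varepsilon}$ to absorb the logarithmic factor into the exponent, we obtain
\begin{equation*}
\int_M \sum_{\tau_k \in [\la, \la + \la^{-a}]} |e_{\tau_k}(x)|^2\, dx \;\lesssim\; \la^{n - 1 - b_{k+1}} + \la^{n - 1 - a}.
\end{equation*}
If $b_{k+1} \le a$, then the first term dominates and \eqref{4.24} holds at level $b_{k+1}$ (with a new but still finite constant depending on $V$ and $n$), so we may iterate. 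If $b_{k+1} > a$, then the second term dominates and we have established \eqref{4.27} directly.

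The affine map $b \mapsto \tfrac{1}{2} b + \tfrac{1}{2}(2 - a) - \varepsilon$ has unique fixed point $b^* = 2 - a - 2\varepsilon$, and starting from $b_0 = -1 < b^*$, the sequence $\{b_k\}$ increases monotonically toward $b^*$. Since $a = (n-1)/(n+1) < 1$ for every $n \ge 2$, one may choose any $\varepsilon < 1 - a$, which ensures $b^* > a$. Consequently, after a finite number $K = K(n, \varepsilon)$ of iterations, one reaches $b_K > a$, the iteration terminates, and \eqref{4.27} follows.

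The entire substance of the argument is contained in Proposition~\ref{torus1}; this corollary is essentially its bootstrap closure. The main (small) technical point is the $\log\la$ in \eqref{4.25}, which one handles at each step by the fixed margin $\varepsilon$; this shifts the fixed point from the naive $2 - a$ down to $2 - a - 2\varepsilon$, but leaves it comfortably above $a$ since $a < 1$, and so does not affect termination or the quality of the final bound.
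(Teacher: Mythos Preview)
Your proof is correct and follows essentially the same bootstrap iteration of Proposition~\ref{torus1} as the paper. The only cosmetic difference is in how the $\log\la$ factor is handled: the paper first runs the recursion ignoring the logarithm to reach $b=a$ exactly, then patches the $\varepsilon$-loss with one extra application, whereas you absorb a fixed $\varepsilon$-margin at every step and show the shifted fixed point $2-a-2\varepsilon$ still exceeds $a$; both variants work and yield the same conclusion.
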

\begin{proof}
To prove \eqref{4.27}, let us first ignore the $\log\la$  factor on the right side of \eqref{4.25}. Define $b_m$ to be the best exponent such that
$$
\int_M \sum\limits_{\tau_k\in[\la,\la+\la^{-a}]}|e_{\tau_k}(x)|^2 dx \le 
C_V \la^{n-1-b_m},
$$
after applying Proposition~\ref{torus1} $m$ times. We have
\begin{equation}\label{4.27'}\tag{4.6$'$}
 n-1-b_{m+1}=\max\{n-2+\frac{n-1}{2(n+1)}-\frac{b_m}{2},\,\,n-1-\frac{n-1}{n+1}\}, \,\,\,\, m=0,1,2...
\end{equation}
with $b_0=-1$.

Now if $b_m\leq \frac{n-5}{n+1}$, we have $b_{m+1}=\frac{b_m}{2}+\frac{n+3}{2(n+1)} $. In this case, 
 $b_{m+1}-b_{m}=\frac{n+3}{2(n+1)}-\frac{b_m}{2}\geq \frac{4}{n+1}$, which means the sequence is strictly increasing in this case. 
Let $N=[\frac{\frac{n-5}{n+1}+1}{\frac{4}{n+1}}]+1$, we have $b_N>\frac{n-5}{n+1}$. Thus by \eqref{4.27'}, $b_m\equiv \frac{n-1}{n+1}$ for all $m>N$.

Since $\log\la\lesssim \la^\e$ for all $\e>0$, by this argument, we have $b_{N+1}\geq \frac{n-1}{n+1}-\e$. However, if $\e$ is small enough,
$$\max\{n-2+\frac{n-1}{2(n+1)}-\frac{\frac{n-1}{n+1}-\e}{2},\,\,n-1-\frac{n-1}{n+1}\}=n-1-\frac{n-1}{n+1}.
$$
So we have in this case $b_m\equiv \frac{n-1}{n+1}$ for all $m>N+1$. The proof of \eqref{4.27} is complete.

\end{proof}

By using Corollary \ref{err}, along with the arguments in Section 3, we have the following result on torus.
\begin{theorem}\label{torusthm1}
If ${\mathbb T}^n=\Rn/{\mathbb Z}^n$ denotes the standard torus with the flat metric and $V\in L^2(M)$, $V^{-}\in{\mathcal K}(M)$, then given \eqref{4.19}, we have 
\begin{equation}\label{4.28}
N_V(\la)=(2\pi)^{-n}\omega_n \mathrm{Vol}_g(M) \, \la^n
+O(\la^{n-1-\frac{n-1}{n+1}}).
\end{equation}
\end{theorem}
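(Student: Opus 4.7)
The plan is to run the template of the proofs of Theorem~\ref{DG} and Theorem~\ref{berardthm} with the choice $T=\la^a$, $a=(n-1)/(n+1)$, substituting the torus-specific Corollary~\ref{err} for the universal spectral cluster bound \eqref{2.4}. Define $\ala(\tau)$ by \eqref{4.9} with this $T$; then \eqref{4.10} and \eqref{4.11} apply verbatim.

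The argument reduces, as in \eqref{4.13}--\eqref{4.15}, to three estimates. The first is
\[
\Bigl|\int_M \bigl(\ola(P_V)-\ala(P_V)\bigr)(x,x)\,dx\Bigr|\lesssim \la^{n-1-a},
\]
which follows from \eqref{4.10} exactly as in \eqref{4.12}: partition the spectrum into dyadic windows $|\la-\tau_k|\in[2^j\la^{-a},2^{j+1}\la^{-a}]$, apply Corollary~\ref{err} to each length-$\la^{-a}$ subwindow, and sum against the Schwartz weight $(1+2^j)^{-2n}$. The second is
\[
\Bigl|\int_M \ala(P^0)(x,x)\,dx-(2\pi)^{-n}\omega_n\mathrm{Vol}_g(M)\la^n\Bigr|\lesssim \la^{n-1-a},
\]
which is \eqref{4.21}, a direct consequence of Hlawka's bound \eqref{4.19}. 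The third, which is the main step, is
\[
\Bigl|\int_M \bigl(\ala(P_V)-\ala(P^0)\bigr)(x,x)\,dx\Bigr|\lesssim \la^{n-1-a}.
\]

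For the third estimate I would start from the diagonal restriction of \eqref{2.20'} and mirror the three-regime split of Section~3 (over $\tau_k\in[\la/2,10\la]$, $\tau_k>10\la$, $\tau_k<\la/2$). The essential modifications of the arguments in Propositions~\ref{compprop}, \ref{largeprop}, and~\ref{smallprop} are: (a) every H\"older application with $\|V\|_{L^1}$ is upgraded to a Cauchy--Schwarz with $\|V\|_{L^2}$, so that the spectral sum over $\tau_k$ in the inner factor is integrated over $M$ rather than taken pointwise in $y$; and (b) the integrated cluster bound from Corollary~\ref{err} on windows of length $\la^{-a}$, together with the Hlawka bound \eqref{4.19} for $H^0$ clusters on the same scale, replaces the pointwise estimate $\sum_{\tau_k\in[\la/2,10\la]}|e_{\tau_k}(y)|^2\lesssim\la^n$ from \eqref{3.13}. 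These refined inputs are fed back through the kernel bounds in Lemma~\ref{lemma3.2} after a dyadic decomposition in the separation $|\la_j-\tau_k|$.

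The hard part is precisely this third step. A direct application of Proposition~\ref{mainprop} with $T=\la^a$ gives only $C_V T^2\la^{n-3/2}=C_V\la^{n-3/2+2a}$, which fails to beat the target $\la^{n-1-a}$ whenever $a>1/6$, hence for every $n\ge 2$; the spurious $T^2$ arises when Lemma~\ref{delta} is applied after differentiating the divided difference $m(\la_j,\tau_k)$ in \eqref{2.22}. Modifications (a) and (b) are designed to close this $\la^{3a-1/2}$ gap by simultaneously exploiting the $L^2$-hypothesis on $V$ and the flat-torus lattice-point bound; any logarithmic loss incurred by the dyadic bookkeeping is absorbed via the $\log\la\lesssim\la^\e$ device used in the bootstrap proof of Corollary~\ref{err} itself. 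Combining the three estimates yields \eqref{4.28}.
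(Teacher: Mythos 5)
Your proposal takes essentially the same route as the paper: the same reduction with $T=\la^a$, $a=\tfrac{n-1}{n+1}$, to the three estimates (Corollary~\ref{err} for the trace of $\ola(P_V)-\ala(P_V)$, Hlawka via \eqref{4.21} for the free term, and the comparison estimate \eqref{4.34}), and the same mechanism for the main step — Cauchy--Schwarz in $y$ with $\|V\|_{L^2}$ plus the integrated cluster bounds at scale $\la^{-a}$ for $H_V$ (Corollary~\ref{err}) and for $H^0$ (from \eqref{4.19}, i.e.\ \eqref{4.55}) fed into a dyadic decomposition of $|\la_j-\tau_k|$ refined down to scale $\la^{-a}$, which is precisely the content of Lemma~\ref{lemma4.8} and Proposition~\ref{compprop1} and yields $\la^{n-2}\log\la\ll\la^{n-1-a}$. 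So the proposal is correct and matches the paper's proof in structure and key ingredients.
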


Note that
\eqref{4.28} is a variant of \eqref{1.15} with a stronger condition on the potential $V$. The condition $V\in L^2(M)$ arises when we try to get an improvement 
over the main terms in \eqref{3.1}, which come from applying Lemma~\ref{lemma3.3} before.  For more details, see Lemma~\ref{lemma4.7} in the argument below, and we postpone the proof of \eqref{1.15} to the end of the section.

\begin{proof}[Proof of Proposition \ref{torus1} and Theorem \ref{torusthm1}] We shall first give the proof of \eqref{4.28}, then prove \eqref{4.25} by modifying the
argument.

Let $\ala(\tau)$ be defined as in \eqref{2.15} with $\e=\la^{-\frac{n-1}{n+1}}$. In view of \eqref{4.19} and \eqref{4.27} by repeating the arguments in \eqref{2.13}-\eqref{2.23}
we see that \eqref{4.28} would be a consequence of
\begin{multline}\label{4.34}
\Bigl| \, \sum_{j,k}\int_M \int_M
\frac{\ala(\la_j)-\ala(\tau_k)}{\la_j^2-\tau_k^2} e_j^0(x)e_j^0(y)V(y)e_{\tau_k}(x)
e_{\tau_k}(y) \, dx dy\, \Bigr| 
\le 
C_V \la^{n-2}\log \la,
\end{multline}
where we interpret 
\begin{equation}\label{4.35}
\frac{\ala(\tau)-\ala(\mu)}{\tau^2-\mu^2}=\frac{\ala^\prime(\tau)}{2\tau},  \hspace{2mm} \text{if}\,\, \,\,\tau=\mu.
\end{equation}

The proof of \eqref{4.34} requires a bound on the trace of certain spectral projection operators, which is \eqref{4.27}. 
The fact that we rely on trace inequalities, rather than pointwise ones as was done in the past, accounts for our assumption here that $V\in L^2(M)$.

As before, we shall split things into three different cases that require slightly different arguments. The main contribution still comes from frequencies $\tau_k$ which are comparable to $\la$. The proof for large or small frequencies $\tau_k$ directly follow from applying Proposition~\ref{largeprop} and Proposition~\ref{smallprop}, 
and these two cases only contribute terms which are $O( \|V\|_{L^1(M)}\la^{n-2}(\log \la)^{1/2})$.
%

Consequently, we would obtain \eqref{4.34} if we could prove the following.

\begin{proposition}\label{compprop1}
As in Theorem~\ref{torusthm1}, fix $V\in L^2(M)$ with $V^{-}\in{\mathcal K}(M)$. If  $\ala(\tau)$ is defined as in \eqref{2.15} with $\e=\la^{-\frac{n-1}{n+1}}$ , then
\begin{multline}\label{4.36}
\Bigl| \, \sum_{j}
\sum_{\{k: \, \tau_k\in [\lambda/2,10\lambda]\}}\int_M \int_M
\frac{\ala(\la_j)-\ala(\tau_k)}{\la_j^2-\tau_k^2} e_j^0(x)e_j^0(y)V(y)e_{\tau_k}(x)
e_{\tau_k}(y) \, dx dy\, \Bigr| 
\\
\le 
C_V \la^{n-2}\log \la.
\end{multline}
\end{proposition}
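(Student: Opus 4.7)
The plan is to mirror the proof of Proposition~\ref{compprop} with two essential modifications dictated by (i) the scale $\la^{-a}$ on which $\ala$ in \eqref{4.31} is now concentrated, as quantified by \eqref{4.32}--\eqref{4.33}, and (ii) the use of $\|V\|_{L^2}$ in place of $\|V\|_{L^1}$, which is afforded by the trace bound \eqref{4.27} via Cauchy--Schwarz in the $y$-variable.

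First, I would decompose
\[K_\tau(x,y)=\sum_j \frac{\ala(\la_j)-\ala(\tau)}{\la_j^2-\tau^2}\,e_j^0(x)e_j^0(y)\]
exactly as in \eqref{3.2}--\eqref{3.4}, but with the near-diagonal cutoff $\beta_0(\la_j-\tau)$ replaced by $\beta_0(\la^a(\la_j-\tau))$, so that the near-diagonal piece $K_{\tau,0}$ is supported on $|\la_j-\tau|\lesssim \la^{-a}$ and the Littlewood--Paley dyadic pieces $K^\pm_{\tau,\ell}$ range through $\la^{-a}\lesssim 2^\ell\leq \la/100$ --- a total of $O(\log\la)$ scales, which is the origin of the logarithmic factor in \eqref{4.36}.

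Second, using orthogonality, \eqref{freespec}, and the rapid off-diagonal decay $\ala(\la_j)=O((1+\la^a|\la_j-\la|)^{-N})$, I would prove the rescaled analogs of Lemma~\ref{lemma3.2}. Writing $I^\pm_{\ell,p}$ for the sub-intervals of length $2^\ell$ indexed from $\la$, I expect for $\tau\in I^\pm_{\ell,p}\cap[\la/2,10\la]$ the bounds
\[\|K^\pm_{\tau,\ell}(\cdot,y)\|_{L^2},\quad \|2^\ell\,\partial_\tau K^\pm_{\tau,\ell}(\cdot,y)\|_{L^2}\ \lesssim\ \la^{(n-3)/2}\,2^{-\ell/2}\,(1+p\cdot 2^\ell\la^a)^{-N},\]
while $K_{\tau,0}$ should satisfy the analog of size $\la^{(n-3+a)/2}$ with effective scale $\la^{-a}$. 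Applying Lemma~\ref{delta} and then Cauchy--Schwarz against $V\in L^2$ in $y$ gives
\[\Bigl|\sum_{\tau_k\in I^\pm_{\ell,p}}\iint K^\pm_{\tau_k,\ell}(x,y)V(y)e_{\tau_k}(x)e_{\tau_k}(y)\,dxdy\Bigr|\ \leq\ \|V\|_{L^2}\,F_{\tau,\ell}\cdot\Bigl(\int\sum_{\tau_k\in I^\pm_{\ell,p}}|e_{\tau_k}(y)|^2\,dy\Bigr)^{1/2},\]
where $F_{\tau,\ell}$ is the right-hand side of the previous display. By \eqref{4.27} and pigeonholing, $\int\sum_{\tau_k\in[\la,\la+L]}|e_{\tau_k}(y)|^2\,dy\lesssim(L+\la^{-a})\la^{n-1}$, so for $2^\ell\geq\la^{-a}$ the last factor is $\lesssim(2^\ell\la^{n-1})^{1/2}$ and the per-sub-interval bound becomes
\[\|V\|_{L^2}\,\la^{n-2}\,(1+p\,2^\ell\la^a)^{-N}.\]
Summing over $p$ yields $O(\|V\|_{L^2}\la^{n-2})$ per dyadic scale, and the near-diagonal piece gives $\la^{(n-3+a)/2}\cdot\la^{(n-1-a)/2}=\la^{n-2}$ by the same mechanism.

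Finally, summing over the $O(\log\la)$ dyadic scales yields the required $O(\|V\|_{L^2}\la^{n-2}\log\la)$ bound. The tail terms $K^\pm_{\tau,\infty}$ and the resolvent contributions are controlled exactly as in Proposition~\ref{compprop} via Lemma~\ref{freeres}, producing an acceptable $O(\|V\|_{L^2}\la^{n-2})$ error. The main obstacle will be the bookkeeping: arranging that the $L^2$-in-$x$ estimates on $K^\pm_{\tau,\ell}$ and $K_{\tau,0}$ combine with the $\tau$-derivative bounds required by Lemma~\ref{delta} at the scales $\delta=2^\ell$ and $\delta=\la^{-a}$, together with the trace bound \eqref{4.27}, to give exactly $\la^{n-2}$ per scale, with no further loss beyond the single logarithm contributed by the number of scales.
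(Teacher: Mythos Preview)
Your proposal is correct and follows essentially the same approach as the paper: the same rescaled Littlewood--Paley decomposition at scale $\la^{-a}$, the same $L^2$-in-$x$ kernel bounds (the paper's Lemma~\ref{lemma4.8}), Cauchy--Schwarz in $y$ against $\|V\|_{L^2}$, and the trace bound \eqref{4.27} to close each dyadic piece at size $\la^{n-2}$, with the $\log\la$ coming from the number of scales. The only cosmetic difference is that you parameterize the dyadic width by $2^\ell\in[\la^{-a},\la/100]$ directly, whereas the paper writes the width as $\la^{-a}2^\ell$ with $1\le 2^\ell\le \la^{1+a}/100$; after translation your bound $\la^{(n-3)/2}2^{-\ell/2}$ matches the paper's $\la^{(n-1+a)/2-1}2^{-\ell/2}$, and your decay factor $(1+p\,2^\ell\la^a)^{-N}$ is a slightly sharper (but equivalent for summation) version of the paper's $(1+j)^{-N}$.
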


To prove Proposition~\ref{compprop1}, we shall follow the same setup as in \eqref{3.2}-\eqref{3.5}, i.e. we need to show that
\begin{equation}\label{c.1}\tag{4.10$'$}
\Bigl| \sum_{\tau_k \in [\la/2,10\la]}\iint
K_{\tau_k}(x,y)e_{\tau_k}(x)e_{\tau_k}(y) \, V(y)\, dxdy\Bigr|
\le C_V  \la^{n-2}\log \la,
\end{equation}
where we shall write $K_{\tau}(x,y)$ as in \eqref{3.3} for $\tau=\tau_k\in [\la/2,\la]$ and \eqref{3.4}
for $\tau=\tau_k\in (\la,10\la]$ separately.

We just need to prove \eqref{c.1} when $K_{\tau}(x,y)$ is replaced by $K_{\tau, \ell_0}(x,y)$, $ \sum_{\ell} K^-_{\tau, \ell}(x,y)$, and $\sum_{\ell} R_{\tau, \ell}(x,y)(1-\ala(\tau))$ 
 if $\tau=\tau_k\in [\la/2,\la]$, and when $K_{\tau}(x,y)$ is replaced by $K_{\tau, \ell_0}(x,y)$, $ \sum_{\ell} K^+_{\tau, \ell}(x,y)$, and $\sum_{\ell} R_{\tau, \ell}(x,y)\ala(\tau)$ if $\tau=\tau_k\in (\la,10\la]$,
 since as in the proof of Proposition~\ref{compprop}, the other terms only contribute terms which are $O( \|V\|_{L^1(M)}\la^{n-2})$ to the right side of \eqref{c.1}.

To proceed, we need the following lemma which is essentially a refined version of Lemma~\ref{lemma3.2}.
\begin{lemma}\label{lemma4.6}
If $a=\frac{n-1}{n+1}$, $\ell \in {\mathbb Z}$, $\la^{-a}<2^\ell\le \la/100$,
and $j=0,1,2,\dots$, we have for each $N\in {\mathbb N}$
\begin{multline}\label{4.41}
\|K^{\pm}_{\tau,\ell}(\, \cdot\, ,y )\|_{L^2(M)}, \, \,
\|2^\ell \tfrac\partial{\partial\tau}K^{\pm}_{\tau,\ell}(\, \cdot\, ,y )\|_{L^2(M)}
\\
\lesssim \la^{\frac{n-1}2-1}2^{-\ell/2}(1+j)^{-N},
\quad \tau\in I_{\ell,j}^\pm \cap [\la/2,10\la].
\end{multline}
Also, 
\begin{multline}\label{4.42}
\|K_{\tau,\ell_0}(\, \cdot\, ,y )\|_{L^2(M)}, \, \, 
\|\la^{-a}\tfrac\partial{\partial\tau}K_{\tau,\ell_0}(\, \cdot\, ,y )\|_{L^2(M)} 
\\
\lesssim
\la^{\frac{n-1+a}2-1}(1+j)^{-N}, \quad 
\tau\in I_{\ell_0,j}^\pm \cap [\la/2,10\la],
\end{multline}
Finally, we also have for $\la^{-a}< 2^\ell \le \la/100$ and $\tau\in [\la/2,10\la]$
\begin{equation}\label{4.45}
\|R_{\tau,\ell}(\, \cdot\, ,y )\|_{L^2(M)}, \, \, 
\|2^\ell \tfrac\partial{\partial \tau}R_{\tau,\ell}(\, \cdot\, ,y)\|_{L^2(M)}
\lesssim \la^{\frac{n-1}2-1} 2^{-\ell/2}.
\end{equation}
\end{lemma}

As before, we are abusing notation a bit.  First, in \eqref{4.41}
we mean that if $K_{\tau,\ell}$ equals $K^+_{\tau,\ell}$ or
$K^-_{\tau,\ell}$ then the bounds in \eqref{4.41} for
$\tau$ in $I_{\ell,j}^+ \cap [\la,10\la]$ or
$I_{\ell,j}^- \cap [\la/2,\la]$, respectively. 
\begin{proof}\renewcommand{\qedsymbol}{}
To prove the first inequality we note that if $\tau\in I^\pm_{\ell,j}\cap[\la/2,10\la]$ and $\beta(2^{-\ell}(\la_i-\tau))\ne 0$, then
$|\la_i-\tau|\le 2^{\ell+1}$ and $\la_i,\tau\approx \la$, and, in this case,
we also have $\ala(\la_i)-1=O((1+|j|)^{-N})$ if $\tau\in I^-_{\ell,j}$ and $\ala(\la_i)=O((1+|j|)^{-N})$ if
$\tau\in I^+_{\ell,j}$.  Therefore, by \eqref{b.5}, we have
\begin{align*}
\bigl\|K^\pm_{\tau,\ell}(\, \cdot \, ,y)\bigr\|_{L^2(M)}&\lesssim
(1+|j|)^{-N}\la^{-1}2^{-\ell} \, \bigl(\sum_{\{i: \, |\la_i-\tau|\le 2^{\ell+1}\}} |e^0_i(y)|^2\bigr)^{1/2}
\\
&\le (1+|j|)^{-N}2^{-\ell/2}\la^{\frac{n-1}2-1},
\end{align*}
which is the first part of \eqref{4.41}.  In the second inequality, we used the fact that  
\begin{equation}\label{4.55}
\sum\limits_{\la_i\in[\la,\la+2^\ell]}|e^0_i(x)e^0_i(y)|\lesssim \la^{n-1}2^\ell, \,\,\,\forall\,\, 2^\ell\ge \la^{-a},
\end{equation}
which is a consequence of \eqref{4.19'} if we choose$ \{e_i^0\}$ to be the standard orthonormal basis,
$\{\exp(2\pi ij\cdot x,\,\, j \in \mathbb{Z}_n)\}$ for the Laplacian on the torus.  For we then have that
the left side of \eqref{4.55} equals the number of eigenvalues of $P^0$ in $[\la, \la + \la^{-a}]$.

The other inequality
in \eqref{4.41} follows from this argument since
$$\frac\partial{\partial \tau}
\frac{\tilde \beta(2^{-\ell}(\la_i-\tau))}{\la_i+\tau}=O(\la^{-1}2^{-\ell}),
$$
due to the fact that we are assuming that $2^\ell \le \la/100$.

This argument also gives us \eqref{4.42} if we use the fact that
$\tau\to (\ala(\tau)-\ala(\mu))/(\tau^2-\mu^2)$ is smooth if we define it as in \eqref{4.35} when
$\tau=\mu$ and use the fact that
$$\partial_\tau^k \bigl(\beta_0(\la_i-\tau) (\ala(\la_i)-\ala(\tau))/(\la_i-\tau)\bigr)=O(\la^{a(k+1)}(1+|j|)^{-N}), \, \, k=0,1,
\, \, \tau\in I_{0,j}^\pm,$$
and the fact that, if this expression is nonzero, we must have $|\la_i-\tau|\le 2\la^{-a}$.

The remaining estimates in \eqref{4.45} just follows from the same argument as in the proof of \eqref{4.41}.
\end{proof}
We shall also need the following lemma for the proof of Proposition~\ref{compprop}.
\begin{lemma}\label{lemma4.7}
Let $a=\frac{n-1}{n+1}$, $I=[a_0, a_0+\gamma]$ be an interval of length $\la^{-a}\le\gamma \le \la$, and assume that for any fixed $\tau \in I \cap [\la/2,10\la]$, $w_\tau(x,y)\in C^1(\mathbb{R}\times M\times M)$ satisfies
\begin{equation}\label{c.2}
\|w_{\tau}(\, \cdot\, ,y )\|_{L^2(M)}, \, \,
\|\gamma \tfrac\partial{\partial\tau}w_{\tau}(\, \cdot\, ,y )\|_{L^2(M)}
\le L,
\end{equation}
for some constant $L$.
Then if $\beta\in C^\infty(\mathbb{R})$ and $V\in L^2(M)$, we have
\begin{multline}\label{c.3}
\Bigl| \sum_{\tau_k\in I\cap[\la/2,10\la]}
\iint w_{\tau_k}(x,y)\beta(\tau_k)e_{\tau_k}(x)e_{\tau_k}(y)
V(y)\, dydx\Bigr| \\
\lesssim  \sup_{\tau\in I\cap[\la/2,10\la]}|\beta(\tau)|\cdot  L\,\la^{\frac{n-1}{2}}\gamma^{\frac12}.
\end{multline}
\end{lemma}
\begin{proof}
For any fixed $y\in M$, by applying Lemma~\ref{delta} with
$\delta=\gamma$, $m(\tau, x)=w_{\tau+a}(x,y)$ and $a_k=\beta(\tau_k)e_{\tau_k}(y)$, we have
\begin{align*}
\Bigl| &\sum_{\tau_k\in I\cap[\la/2,10\la]}
\iint w_{\tau_k}(x,y)\beta(\tau_k)e_{\tau_k}(x)e_{\tau_k}(y)
V(y)\, dydx\Bigr|
\\
&\le |V\|_{L^2}
\cdot  \Bigl\|
\sum_{\tau_k\in I\cap[\la/2,10\la]}
\iint w_{\tau_k}(x,y)e_{\tau_k}(x)e_{\tau_k}(y)
\Bigr\|_{L^2\big(dy; L^1(dx)\big)}
\\
&\le \|V\|_{L^2} \cdot \sup_y
\Bigl( \|w_{a_0}(\, \cdot \, ,y)\|_{L^2(M)}+\int^\gamma_{0}\bigl\|\tfrac\partial{\partial \tau}w_{s+a_0}(\, \cdot \, y)\|_{L^2(M)} \, ds
\Bigr) \notag
\\
&\qquad \qquad \qquad \qquad \qquad \times \bigl(\int_M\sum_{\tau_k\in I\cap [\la/2,10\la]}
|\beta(\tau_k)e_{\tau_k}(y)|^2 dy\bigr)^{1/2}
\notag
\\
&\lesssim \sup_{\tau\in I\cap[\la/2,10\la]}|\beta(\tau)|\cdot  L\,\la^{\frac{n-1}{2}}\gamma^{\frac12}.
\notag
\end{align*}
In the second to last inequality we used \eqref{c.2} and the fact that, 
by Corollary \ref{err} , 
\begin{equation}\label{3.13}
\sum_{\tau_k\in I\cap [\la/2,10\la]} |e_{\tau_k}(y)|^2 \lesssim
 \gamma\,\la^{n-1}, \,\,\forall\,\,\, \la\ge 1\,\,\,\text{if}\,\,\,\la^{-a}\le\gamma \le \la.
\end{equation}
As we alluded to before, since Corollary~\ref{err} only affords us trace bounds, 
the preceding inequality involves $\|V\|_{L^2(M)}$ in the right, as opposed to 
$L^1$-norms of the potential as was the case in the past.
\end{proof}

\begin{proof}[Proof of Proposition~\ref{compprop1}]
First, if we apply Lemma~\ref{lemma4.7} with $w_\tau(x,y)=K^{\pm}_{\tau,\ell}(x,y )$, $\gamma=2^\ell$ and $\beta(\tau)\equiv 1$, by \eqref{4.41} 
\begin{equation}\label{c.5}
\Bigl| \sum_{\tau_k\in I^\pm_{\ell,j}\cap (\la,10\la]}\iint K^\pm_{\tau_k,\ell}(x,y)e_{\tau_k}(x)e_{\tau_k}(y)
V(y)\, dydx\Bigr|  \lesssim  \la^{n-2} \cdot (1+j)^{-N}
\end{equation}
If we sum over $j=0,1,2,\dots$, we see that for $\la^{-a}< 2^\ell \le \la/100$, \eqref{c.5} yields
\begin{multline}\label{c.6}
\Big| \sum_{\la <\tau_k\le 10\la}
\iint K^+_{\tau_k,\ell}(x,y) e_{\tau_k}(x)e_{\tau_k}(y) 
\, V(y)\, dxdy\Bigr|
\\
+\Big| \sum_{\la/2 \le \tau_k\le\la}
\iint K^-_{\tau_k,\ell}(x,y) e_{\tau_k}(x)e_{\tau_k}(y) 
\, V(y)\, dxdy\Bigr|
\lesssim \la^{n-2}.
\end{multline}
If we take $w_\tau(x,y)=K_{\tau,\ell_0}(x,y )$, $\gamma=\la^{-a}$ and $\beta(\tau)\equiv 1$ in Lemma~\ref{lemma4.7}, this argument
also gives
\begin{multline}\label{c.7}
\Bigl| \sum_{\la/2 \le \tau_k\le \la}
\iint K_{\tau_k,\ell_0}(x,y) e_{\tau_k}(x)e_{\tau_k}(y) \, 
V(y)\, dx dy\Bigr|
\\
+ \Bigl| \sum_{\la < \tau_k\le 10\la}
\iint K_{\tau_k,\ell_0}(x,y) e_{\tau_k}(x)e_{\tau_k}(y) \, 
V(y)\, dx dy\Bigr|
\lesssim \la^{n-2}.
\end{multline}

Next, since $R_{\tau,\ell}$ enjoys the bounds in \eqref{4.45}, we can use Lemma~\ref{lemma4.7} with $w_\tau(x,y)=R_{\tau,\ell}(x,y )$, $\gamma=2^\ell$ and $\beta(\tau)=\ala(\tau)$ 
to see that for $\la^{-a}< 2^\ell \le \la/100$ we
have
\begin{align*}
\Bigl| \sum_{\tau_k\in I^+_{\ell,j}\cap (\la,10\la]}
&\iint R_{\tau_k,\ell}(x,y)\ala(\tau_k)e_{\tau_k}(x)
e_{\tau_k}(y) V(y)\, dxdy\Bigr|
\\
&\lesssim \|V\|_{L^1}\cdot 2^{-\ell/2}\la^{\frac{n-1}2-1}
\sup_y\bigl(\sum_{\tau_k\in I^+_{\ell,j}\cap
(\la,10\la]} |\ala(\tau_k)e_{\tau_k}(y)|^2\bigr)^{1/2} 
\\
&\lesssim \la^{n-2}\cdot (1+j)^{-N},
\end{align*}
since $\ala(\tau_k)=O((1+j)^{-N})$ if $\tau_k
\in I^+_{\ell,j}$.  Summing over this bound over $j$
of course yields
\begin{equation}\label{c.8}
\Bigl|\sum_{\la<\tau_k\le 10\la}
\iint R_{\tau_k,\ell}(x,y)
\ala(\tau_k)e_{\tau_k}(x)e_{\tau_k}(y) V(y)\, dxdy\Bigr|
\lesssim \la^{n-2}.
\end{equation}
The same argument gives
\begin{equation}\label{c.9}
\Bigl|\sum_{\la/2\le\tau_k\le \la}
\iint R_{\tau_k,\ell}(x,y)
\bigl(1-\ala(\tau_k)\bigr)e_{\tau_k}(x)e_{\tau_k}(y) V(y)\, dxdy\Bigr|
 \lesssim \la^{n-2}.
\end{equation}

We now have assembled all the ingredients for the proof
of \eqref{c.1}.  If we use \eqref{c.6}, \eqref{c.7},
\eqref{c.9}, \eqref{3.19} and \eqref{3.20} along with
\eqref{3.3}, we conclude that the analog of \eqref{c.1}
must be valid where the sum is taken over
$\tau_k\in [\la/2,\la]$.  The log-loss comes from the fact
that there are $\approx \log \la$ terms
$K^-_{\tau,\ell}$ and $R_{\tau,\ell}$.
We similarly obtain the analog of \eqref{c.1} where the sum is taken over $\tau_k\in (\la,10\la]$ from \eqref{3.4} along with
\eqref{c.6}, \eqref{c.7}, \eqref{3.16}, \eqref{c.8} and \eqref{3.19}.

From this, we deduce that \eqref{c.1} must be valid,
which finishes the proof of Proposition~\ref{compprop1}.
\end{proof}

\end{proof}
Now we give the proof of Propostion~\ref{torus1}. 
Let $a=\frac{n-1}{n+1}$ and $\tilde \chi_\la(\tau)$ be defined as in \eqref{4.5'} with $\e=\la^{-\frac{n-1}{n+1}}$. In view of \eqref{4.19} and \eqref{4.27} by repeating the arguments in \eqref{4.2}-\eqref{4.8}
we see that \eqref{4.25} would be a consequence of
\begin{multline}\label{d.1}
\Bigl| \, \sum_{j,k}\int_M \int_M
\frac{\tilde \chi_\la(\la_j)-\tilde \chi_\la(\tau_k)}{\la_j^2-\tau_k^2} e_j^0(x)e_j^0(y)V(y)e_{\tau_k}(x)
e_{\tau_k}(y) \, dx dy \Bigr|
\\
\le C_{V}  \, 
\la^{n-2+\frac{a-b}{2}}\log \la,
\end{multline}
where we interpret 
\begin{equation}
\frac{\tilde \chi_\la(\tau)-\tilde \chi_\la(\mu)}{\tau^2-\mu^2}=\frac{\tilde \chi_\la^\prime(\tau)}{2\tau},  \hspace{2mm} \text{if}\,\, \,\,\tau=\mu.
\nonumber
\end{equation}

As before, we shall split things into three different cases that require slightly different arguments. The main contribution still comes from frequencies $\tau_k$ which are comparable to $\la$. For large or small frequencies $\tau_k$, the proof is completely analogous to the proof of Proposition~\ref{largeprop} and Proposition~\ref{smallprop}, and these two cases only contribute terms to the right side of \eqref{d.1} which are
$O( \|V\|_{L^1(M)}\la^{n-2}(\log \la)^{1/2})$.
%

Conequently, we would obtain \eqref{d.1} if we could prove the following.
\begin{multline}\label{4.102}
\Bigl| \, \sum_{j}
\sum_{\{k: \, \tau_k\in [\lambda/2,10\lambda]\}}\sum_{j,k}\int_M \int_M
\frac{\tilde \chi_\la(\la_j)-\tilde \chi_\la(\tau_k)}{\la_j^2-\tau_k^2} e_j^0(x)e_j^0(y)V(y)e_{\tau_k}(x)
e_{\tau_k}(y) \, dx dy \Bigr|
\\
\le C_{V} \, 
\la^{n-2+\frac{a-b}{2}}\log \la.
\end{multline}
The proof of \eqref{4.102} is similar to \eqref{4.36}. After replacing $\ala(\tau)$ by $\tilde \chi_\la(\tau)$ in the proof of \eqref{4.36}, the 
main contribution, which gives the right side of \eqref{4.102}, still comes from terms where Lemma~\ref{lemma4.7} is applied, i.e., \eqref{c.5}-\eqref{c.9}. The difference 
is we use \eqref{4.24} instead of Corollary~\ref{err} in the proof of Lemma~\ref{lemma4.7}. Moreover,  we do not 
need to divide the proof into two cases as in \eqref{3.3} and \eqref{3.4}, since 
$\tilde \chi_\la(\tau)$ is rapidly decreasing away from $\la$ on both regions $\tau_k\in[\la/2, \la] $ and $\tau_k\in[\la, 10\la] $. We skip the 
proof of \eqref{4.102} here for simplicity.

\quad\newline

Having presented the model argument, let us now prove Theorem~\ref{torusthm}.

\begin{proof}[Proof of \eqref{1.16}]
To get an improvement over the error term as in \eqref{1.16},  we need the following

\begin{proposition}\label{standtorus1}
If   ${\mathbb T}^n=\Rn/{\mathbb Z}^n$ denotes the standard torus with the flat metric and $N^0(\la)$ denotes the Weyl counting function for $H^0$, then
\begin{equation}\label{4.56}
N^0(\la)=(2\pi)^{-n}\omega_n \mathrm{Vol}_g(M) \, \la^n
+r_n(\la),
\end{equation}
where
$$r_n(\la)\lesssim\begin{cases}
\la^{n-2} , \qquad\qquad\,\,\,\,\, \text{if} \,\,\, n\geq 5 \\
\la^{2}(\log \la)^{2/3} ,\quad \,\,\,\,\,\, \text{if} \,\,\, n=4 \\
\la^{\frac{21}{16}+\e} , \quad\qquad\quad\,\,\,\, \text{if} \,\,\, n=3 \\
\la^{\frac{131}{208}}(\log \la)^{\frac{18627}{8320}}  , \,\,\, \text{if} \,\,\, n=2.\\
\end{cases}
$$
\end{proposition}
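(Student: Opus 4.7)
The plan is to reduce Proposition~\ref{standtorus1} to the classical lattice point counting problem on Euclidean balls, for which the stated exponents are simply the best known results in analytic number theory. On $\mathbb{T}^n = \R^n/{\mathbb Z}^n$ with the flat metric, the eigenvalues of $-\Delta_g$ are $4\pi^2|k|^2$ for $k \in {\mathbb Z}^n$ (with eigenfunctions $e^{2\pi i k\cdot x}$), so the eigenvalues of $\sqrt{H^0}=\sqrt{-\Delta_g+1}$ are $\la_k = \sqrt{4\pi^2|k|^2+1}$ and
\begin{equation*}
N^0(\la) \, = \, \#\bigl\{k \in {\mathbb Z}^n : \la_k \le \la\bigr\} \, = \, \#\bigl\{k \in {\mathbb Z}^n : |k| \le R\bigr\}, \qquad R = \sqrt{\la^2-1}/(2\pi).
\end{equation*}
Thus $N^0(\la)$ is precisely the number of integer lattice points in the Euclidean ball of radius $R \approx \la/(2\pi)$.

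The next step is routine matching of main terms. Expanding
\begin{equation*}
\la^n \, = \, (2\pi)^n R^n \bigl(1 + (4\pi^2 R^2)^{-1}\bigr)^{n/2} \, = \, (2\pi)^n R^n + O(R^{n-2}),
\end{equation*}
and using $\mathrm{Vol}_g(\mathbb{T}^n)=1$, the Riemannian main term $(2\pi)^{-n}\omega_n\mathrm{Vol}_g(\mathbb{T}^n)\la^n$ equals $\omega_n R^n + O(\la^{n-2})$. Writing the classical lattice point count as $\omega_n R^n + E_n(R)$, we therefore obtain
\begin{equation*}
r_n(\la) \, = \, E_n(R) + O(\la^{n-2}).
\end{equation*}
The four claimed estimates on $r_n(\la)$ then follow directly from the best available bounds for the lattice point discrepancy $E_n(R)$: the Walfisz/Landau bound $E_n(R) = O(R^{n-2})$ for $n \ge 5$, Walfisz's refinement $E_4(R) = O(R^2(\log R)^{2/3})$, Heath-Brown's $E_3(R) = O(R^{21/16+\e})$, and the Huxley / Bourgain--Watt state-of-the-art $E_2(R) = O(R^{131/208}(\log R)^{18627/8320})$ for the Gauss circle problem. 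For $n = 2$ and $n = 3$ the additional $O(\la^{n-2})$ term (which is $O(1)$ and $O(\la)$ respectively) is absorbed into the larger lattice point error, since $131/208 > 0$ and $21/16 > 1$.

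In short, essentially all the work is the trivial reduction described above; every bit of quantitative depth is imported from analytic number theory. The real ``obstacle'' is not encountered in this paper at all: it is the Gauss circle problem for $n = 2$ and the Heath-Brown-type sphere estimate for $n = 3$, which we simply invoke as black boxes.
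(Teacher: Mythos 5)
Your proposal is correct and matches the paper's treatment: the paper gives no argument beyond observing that the statement is equivalent to lattice point counting in balls and citing Landau/Walfisz ($n\ge5$), Walfisz ($n=4$), Heath-Brown ($n=3$) and Huxley ($n=2$), exactly the black boxes you invoke. Your explicit bookkeeping for the $+1$ shift in $H^0=-\Delta_g+1$ and the resulting $O(\la^{n-2})$ adjustment of the main term is a harmless elaboration of what the paper leaves implicit.
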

There has been a lot of research related to the Weyl formula on the  torus, which is equivalent to counting the lattice points inside the ball of radius $\la$.
Currently, the exact order of the error term is only known when $n\geq 5$.  See e.g. E. Landau \cite{landau3vorlesungen}, A. Walfisz \cite{walfisz1957gitterpunkte}, and E. Kr\"atzel \cite{kratzel2013analytische}. The above best known results in lower dimensions are due to A. Walfisz \cite{walfisz1959gitterpunkte} ($n$=4), D. R. Health-Brown \cite{heath1999lattice} ($n$=3), and M. N. Huxley \cite{huxley2003exponential} ($n$=2). 
For  more details and a discussion of recent progress 
on the problem, see e.g. the survey paper \cite{ivic2004lattice}, and W. Freeden \cite{freeden2011metaharmonic}.

For simplicity, we will only give the proof of \eqref{1.16} for $n\geq 5$.   The proof for  $n=4$  follows from the same argument due to the fact that the extra $(\log \la)^{2/3}$-factor is harmless in the presence of the $\la^\e$-factor in \eqref{1.16}. Also, for the $n=2,3$ cases, if we use the improved results in \eqref{4.56}, by the same argument as in 
the proof of Theorem \ref{4.6}, we can recover the improved bound without a $\la^\e$-loss.

As a consequence of \eqref{4.56}, if $n\ge5$,  we also have the following analog of \eqref{4.19'}
\begin{equation}\label{e.1}
\int_M \sum_{\la_j\in [\la,\la+\la^{-1})}|e_j^0(x)|^2dx=
O(\la^{n-2}), \,\,\,\forall\,\,\la\ge1.
\end{equation}

As before, we  first prove a Proposition which allows us to do iterations.
\begin{proposition}\label{torus2}
 Let  ${\mathbb T}^n=\Rn/{\mathbb Z}^n$ denote the standard torus with the flat metric and $\{e_{\tau_k}\}$ be eigenfunctions of the operator $H_V$. Fix $V\in L^2(M)$, $V^{-}\in{\mathcal K}(M)$, given \eqref{4.56}, when $n\geq 5$, if we assume, for all $\la>1$
\begin{equation}\label{4.59}
\int_M \sum\limits_{\tau_k\in[\la,\la+\la^{-1}]}|e_{\tau_k}(x)|^2 dx \le L\,\la^{n-1-b},
\end{equation}
for some constants $L$ and $b$ with $-1\leq b\leq 1$. Then,
\begin{equation}\label{4.60}
\int_M \sum\limits_{\tau_k\in[\la,\la+\la^{-1}]}|e_{\tau_k}(x)|^2 dx \le C_V L\,\la^{n-2+\frac{1-b}{2}} \log \la+C\la^{n-2}.
\end{equation}
 \end{proposition}

Note that, by \eqref{4.26}, \eqref{4.59} holds for $b=-1$,  and  that every time we apply the Proposition, we would have \eqref{4.59} for a larger value of $b$.  Consequently, just as before, after finitely many iterations, we will obtain the following:
\begin{corr}\label{err1}
Let $V\in L^2(M)$, $V^{-}\in{\mathcal K}(M)$, ${\mathbb T}^n=\Rn/{\mathbb Z}^n$ denote the standard torus with the flat metric, and $\{e_{\tau_k}\}$ be eigenfunctions of the operator $H_V$. Then given \eqref{4.56}, when $n\geq 5$, we have for all $\la>1$
\begin{equation}\label{4.61}
\int_M \sum\limits_{\tau_k\in[\la,\la+\la^{-1}]}|e_{\tau_k}(x)|^2 dx \le 
C_{V,\e} \la^{n-2+\e}, \quad \forall \, \e>0,
\end{equation}
where 
$C_{V,\e}$ is a constant depending on $V$ and $\e$.
\end{corr}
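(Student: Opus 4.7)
The plan is to iterate Proposition~\ref{torus2} finitely many times, exactly as in the argument for Corollary~\ref{err}. I would set $b_0 = -1$ and note that the trivial dyadic bound \eqref{4.26} validates the hypothesis \eqref{4.59} with this starting value. If \eqref{4.59} is known to hold with $b = b_m \in [-1, 1)$, then applying Proposition~\ref{torus2} yields
\begin{equation*}
\int_M \sum_{\tau_k \in [\la, \la + \la^{-1}]} |e_{\tau_k}(x)|^2 \, dx \lesssim \la^{n-2+(1-b_m)/2} \log\la + \la^{n-2}.
\end{equation*}

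Since $b_m < 1$, the first term on the right dominates, and the natural recurrence is $b_{m+1} = (1 + b_m)/2$, for which $1 - b_m = 2 \cdot 2^{-m}$ decays geometrically. To put the new estimate back into the hypothesis form \eqref{4.59} (with no log factor), I would replace the raw $b_{m+1}$ by $b_{m+1} - \eta_{m+1}$ for some small $\eta_{m+1} > 0$, absorbing $\log\la$ into $\la^{\eta_{m+1}}$ at the cost of a constant depending on $\eta_{m+1}$. Choosing a summable sequence such as $\eta_k = \e/2^{k+1}$ keeps the total loss bounded by $\e/2$.

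Given any $\e > 0$, I would then choose $N$ large enough that $2 \cdot 2^{-N} + \e/2 < \e$, and iterate Proposition~\ref{torus2} $N$ times to arrive at an effective admissible exponent $b_N' > 1 - \e$. This produces
\begin{equation*}
\int_M \sum_{\tau_k \in [\la, \la + \la^{-1}]} |e_{\tau_k}(x)|^2 \, dx \le C_{V,\e} \la^{n-1-b_N'} \le C_{V,\e} \la^{n-2+\e},
\end{equation*}
which is precisely \eqref{4.61}.

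I do not anticipate any serious obstacle: all the analytic content sits inside Proposition~\ref{torus2}, and the bootstrap scheme is formal. The only subtlety is the bookkeeping of accumulated logarithms, but since only finitely many iterations are performed and $(\log\la)^N \lesssim_{N,\e} \la^{\e/2}$ for sufficiently large $\la$, these absorb harmlessly into $\la^\e$. Unlike the proof of Corollary~\ref{err}, the recurrence here never saturates to a fixed point, because the secondary term $C\la^{n-2}$ in \eqref{4.60} already matches the target exponent $n-2$; the iteration simply drives $b_m$ arbitrarily close to the optimal value $1$.
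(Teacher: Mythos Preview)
Your proposal is correct and follows essentially the same bootstrap argument as the paper: both start from $b_0=-1$ via \eqref{4.26}, iterate Proposition~\ref{torus2} to obtain the recurrence $b_{m+1}=(1+b_m)/2$, and absorb the logarithmic losses into $\la^\e$. Your bookkeeping of the accumulated logs via a summable sequence $\eta_k$ is slightly more explicit than the paper's treatment (which first ignores the log and then remarks that $\log\la\lesssim_\e\la^\e$), but the underlying idea is identical.
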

\begin{proof}
To prove \eqref{4.61}, let us first ignore the $\log\la$ factor on the right side of \eqref{4.25}. Define $b_m$ to be the best exponent such that
$$
\int_M \sum\limits_{\tau_k\in[\la,\la+\la^{-1}]}|e_{\tau_k}(x)|^2 dx \le 
C_V \la^{n-1-b_m},
$$
after applying Proposition~\ref{torus1} $m$ times. We have
\begin{equation}
 b_{m+1}=\frac{1+b_m}{2} \,\,\,\, m=0,1,2...
 \nonumber
\end{equation}
with $b_0=-1$.

By solving the arithmetic sequences explicitly, we have $b_m=1-\frac{1}{2^{m-1}} \,\,\,\, m=0,1,2...$. So \eqref{4.61} follows 
by letting $m\rightarrow \infty$. And since $\log\la\leq C_\e \la^\e$ for all $\e$, \eqref{4.61} follows from the same argument if we consider the $\log\la$-factor.

\end{proof}

Let $\tilde \chi_\la(\tau)$ be defined as in \eqref{4.5'} with $\e=1/\la$. In view of \eqref{4.19} and \eqref{4.27} by repeating the arguments in \eqref{4.2}-\eqref{4.8}
we see that \eqref{4.60} would be a consequence of
\begin{multline}\label{e.2}
\Bigl| \, \sum_{j,k}\int_M \int_M
\frac{\tilde \chi_\la(\la_j)-\tilde \chi_\la(\tau_k)}{\la_j^2-\tau_k^2} e_j^0(x)e_j^0(y)V(y)e_{\tau_k}(x)
e_{\tau_k}(y) \, dx dy \Bigr|
\\
\le C_{V}  \, 
\la^{n-2+\frac{1-b}{2}}\log \la,
\end{multline}

Similarly, Let $\ala(\tau)$ be defined as in \eqref{2.15} with $\e=1/\la$. In view of \eqref{4.56} and \eqref{4.61} by repeating the arguments in \eqref{2.13}-\eqref{2.23}
we see that \eqref{1.16} would be a consequence of
\begin{multline}\label{e.3}
\Bigl| \, \sum_{j,k}\int_M \int_M
\frac{\ala(\la_j)-\ala(\tau_k)}{\la_j^2-\tau_k^2} e_j^0(x)e_j^0(y)V(y)e_{\tau_k}(x)
e_{\tau_k}(y) \, dx dy\, \Bigr| 
\le 
C_{V,\e} \la^{n-2+\e/2}\log \la.
\end{multline}
Here the number $\e$ on the right side of \eqref{e.3} is the same as the $\e$ appeared in \eqref{4.61}, which can be arbitrary small. And it is irrelevant to the constant $\e$ in the definition of 
$\ala(\tau)$ in \eqref{2.15}, which we choose to be several different numbers throughout this section.

So Proposition \ref{torus2} and \eqref{1.16} follow from \eqref{e.2} and \eqref{e.3}, respectively, and both require 
a bound on the trace of certain spectral projection operators, which are \eqref{4.59} and \eqref{4.61} respectively. 

Similarly, we shall split things into three different cases. The main contribution still comes from frequencies $\tau_k$  
which are comparable to $\la$. For large or small frequencies $\tau_k$, by applying the arguments in the proof of Proposition~\ref{largeprop} and Proposition~\ref{smallprop}, 
, we see that the left side of \eqref{e.2} and \eqref{e.3} would be bounded by  $C_V \, 
\la^{n-2}(\log \la)^{1/2}$, which are better than desired bounds.

Finally, for frequencies  $\tau_k$  which are comparable to $\la$, if we let $a=1$ in the proof of Proposition~\ref{compprop1}, 
and use \eqref{4.59} or \eqref{4.61} correspondingly for the main terms (i.e., results of Lemma~\ref{lemma4.7}.), it follows from 
the same arguments as in \eqref{c.5}-\eqref{c.9} that the left side of \eqref{e.2} and \eqref{e.3} are controlled by
their right sides. The proof of \eqref{1.16} is complete.
\end{proof}

\begin{proof}[Proof of \eqref{1.15}]  To recover to Hlawka bound \cite{Hlawka} under the weaker conditions on $V$ in Theorem~\ref{torusthm},  the strategy is 
similar to previous cases. That is, to get improvements for the main terms in \eqref{3.1}, which are essentially the bounds in Lemma~\ref{lemma4.7} above.
We begin with the following Proposition. 

\begin{proposition}\label{torus12}
 Let  ${\mathbb T}^n=\Rn/{\mathbb Z}^n$ denote the standard torus with the flat metric, and $\{e_{\tau_k}\}$ be the eigenfunctions of the operator $H_V$. Fix $V\in L^p(M)$, $V^{-}\in{\mathcal K}(M)$ for some $p>\frac{2n}{n+2}$ for $n\ge3$ and $V\in \mathcal{K}(M)$ for $n=2$,
given \eqref{4.19},
  if we assume, for all $\la>1$ that
\begin{equation}\label{4.69}
\int_M \sum\limits_{\tau_k\in[\la,\la+\la^{-a}]}|e_{\tau_k}(x)|^2 dx \le L\,\la^{n-1-b},
\end{equation}
for some constants $L$ and $ b$ with $-1\leq b\leq a$, then it follows that for any $1\leq p\leq 2$ we have
\begin{equation}\label{4.70}
\int_M \sum\limits_{\tau_k\in[\la,\la+\la^{-a}]}|e_{\tau_k}(x)|^2 dx \le \begin{cases} C_V L \la^{\frac16-\frac b2} \log \la+C\la^{2/3} \,\,\,\, \text{if} \,\,\, n=2\\
C_V L \la^{k(b,p)} \log \la+C\la^{n-1-a}  \,\,\,\, \text{if} \,\,\, n\geq 3,
\end{cases}
\end{equation}
where $a=\frac{n-1}{n+1}$, and $k(b,p)=\frac{n-1+a}{2}-1+\frac{n-1-b}{2}\cdot (2-\frac2p)+\frac n2\cdot(\frac2p-1)$.
 \end{proposition}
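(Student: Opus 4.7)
The plan is to run the argument of Proposition~\ref{torus1} essentially verbatim, but replace the $L^2$--$L^2$ H\"older step used in \eqref{4.47} by an $L^p$--$L^{p'}$ step, and then interpolate the resulting spectral sum between the trace hypothesis \eqref{4.69} at $p'=2$ and a pointwise bound at $p'=\infty$. Setting $\tilde\chi_\la(\tau)=\chi(\la^a(\la-\tau))+\chi(\la^a(\la+\tau))$, the reduction \eqref{4.2}--\eqref{4.8} together with \eqref{4.19} shows that it suffices to bound
\begin{equation*}
\Big|\sum_{j,k}\iint \frac{\tilde\chi_\la(\la_j)-\tilde\chi_\la(\tau_k)}{\la_j^2-\tau_k^2}\,e_j^0(x)e_j^0(y)V(y)e_{\tau_k}(x)e_{\tau_k}(y)\,dxdy\Big|
\end{equation*}
by the right side of \eqref{4.70}. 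Split the $\tau_k$-sum into the comparable ($\tau_k\in[\la/2,10\la]$), large ($\tau_k>10\la$), and small ($\tau_k<\la/2$) ranges. The latter two are disposed of by slight modifications of Propositions~\ref{largeprop} and~\ref{smallprop}, with $\|V\|_{L^1}$ replaced by $\|V\|_{L^p}$ (admissible since $p\ge 1$ and $M$ has finite measure); they contribute at most $O(\|V\|_{L^p}\la^{n-2}(\log\la)^{1/2})$, which is absorbed into the right side of \eqref{4.70}.

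For the main contribution from the comparable range, decompose $K_\tau$ dyadically as in \eqref{4.37}; the kernel estimates of Lemma~\ref{lemma4.8} remain in force. For the leading piece $K^\pm_{\tau,\ell}$ apply H\"older in $y$ with exponents $(p,p')$, writing $F_{\tau_k,\ell}(y)=\int K^\pm_{\tau_k,\ell}(x,y)e_{\tau_k}(x)\,dx$, to obtain $\|V\|_{L^p}$ times $\bigl\|\sum_{\tau_k\in I^\pm_{\ell,j}}F_{\tau_k,\ell}\,e_{\tau_k}\bigr\|_{L^{p'}(dy)}$. Interpolate the second factor between the endpoints $p'=2$ and $p'=\infty$. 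At $p'=2$, combine Minkowski, \eqref{4.41}, and the trace hypothesis \eqref{4.69} rescaled to an interval of length $\la^{-a}2^\ell$ to get $\lesssim \la^{(n-1+a)/2-1}(\la^{n-1-b})^{1/2}(1+j)^{-N}$; at $p'=\infty$, pointwise Cauchy--Schwarz together with Bessel's inequality in $x$ and the pointwise spectral bound \eqref{4.26} give $\lesssim \la^{(n-1+a)/2-1}\la^{n/2}2^{-\ell/2}(1+j)^{-N}$. Riesz--Thorin interpolation with weight $\theta=2/p'=2-2/p$ then produces exactly
\begin{equation*}
\la^{\frac{n-1+a}{2}-1+\frac{n-1-b}{2}(2-2/p)+\frac{n}{2}(2/p-1)}=\la^{k(b,p)},
\end{equation*}
and the sum over $\ell$ contributes at most a single $\log\la$ while the sum over $j$ converges geometrically from the $(1+j)^{-N}$ factor.

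The remaining pieces $K_{\tau,0}$ and $K^\pm_{\tau,\infty}$ admit the same interpolation and contribute at the same order, while the resolvent-type pieces $R_{\tau,\ell}$ and $R_{\tau,\infty}$ enjoy the stronger kernel bounds \eqref{4.45}--\eqref{4.46}, hence contribute only $O(\|V\|_{L^p}\la^{n-1-a})$ and account for the second term $C\la^{n-1-a}$ on the right side of \eqref{4.70}. The principal technical obstacle is the careful bookkeeping of the interpolation needed to verify that the powers of $2^\ell$ combine with only a logarithmic loss while the $(1+j)^{-N}$ rapid decay in $j$ survives intact. The case $n=2$ requires separate attention since the logarithmic singularity $h_2(r)=\log(2+1/r)$ in \eqref{1.4} modifies the resolvent-type kernels; a dedicated argument using the Kato hypothesis $V\in\mathcal{K}(M)\subset L^1(M)$, together with the $L^1$--$L^\infty$ endpoint applied to a refined pointwise estimate on the two-dimensional spectral sum, then yields the stated exponent $1/6-b/2$.
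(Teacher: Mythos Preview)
Your approach is essentially the same as the paper's: reduce via Duhamel and \eqref{4.20} to the difference term \eqref{4.73}, dispose of small and large frequencies as before, and for the comparable range use the dyadic decomposition \eqref{4.37}, Lemma~\ref{lemma4.8}, the $\delta_\tau$--Lemma~\ref{delta}, an $L^p$--$L^{p'}$ H\"older step in $y$, and then H\"older interpolation of $\bigl\|(\sum|e_{\tau_k}|^2)^{1/2}\bigr\|_{L^{p'}}$ between the trace hypothesis \eqref{4.69} at $p'=2$ and the pointwise bound \eqref{4.26} at $p'=\infty$, exactly as in \eqref{4.77} with $a$ replaced by $b$.

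Two small corrections are worth flagging. First, the second term $C\la^{n-1-a}$ in \eqref{4.70} comes from the \emph{free} spectral estimate \eqref{4.20} in the initial reduction, not from the pieces $R_{\tau,\ell}$ and $R_{\tau,\infty}$; those pieces carry the same rapid-decay factor $\tilde\chi_\la(\tau_k)$ (or $1-\tilde\chi_\la(\tau_k)$) and therefore contribute at the same order $\la^{k(b,p)}$ as $K^\pm_{\tau,\ell}$, not better. Second, the interpolation you invoke is just the log-convexity of $L^{p'}$ norms (H\"older's inequality applied to a fixed function), not Riesz--Thorin; in the paper's formulation one first applies Lemma~\ref{delta} to extract the factor $\|K^\pm_{\cdot,\ell}(\cdot,y)\|_{L^2_x}$, then uses Minkowski over the $2^\ell$ subintervals of $I^\pm_{\ell,j}$ so that the $2^{\ell/2}$ from the trace exactly cancels the $2^{-\ell/2}$ from \eqref{4.41}, and the $\log\la$ then comes from summing over $\ell$. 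Your ``Bessel's inequality in $x$'' at the $p'=\infty$ endpoint does not work as stated, since the kernel $K^\pm_{\tau_k,\ell}$ varies with $k$; what is actually needed there is again Lemma~\ref{delta}.

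For $n=2$ the paper likewise does not spell out the details; it only uses the case $b=0$, where the pointwise spectral bound \eqref{4.71} from \cite{BSS} is available and the $L^1$--$L^\infty$ endpoint can be run directly as in \eqref{4.76}, yielding $\la^{1/6}$.
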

 Here the constant $C_V$ in \eqref{4.25} depends on $\|V\|_{L^1(M)}$ if $n=2$ and $\|V\|_{L^p(M)}$ if $n\ge 3$. It also depends on the implicit constants in \eqref{2.34}, since the proof of \eqref{4.25} will use heat kernel estimates involving $H_V$.

If $V\in L^1(M)$, $V^{-}\in{\mathcal K}(M)$, by \eqref{4.26}, \eqref{4.69} is true for $b=-1$.  In particular, when $n=2$, if $V\in{\mathcal K}(M)$, by applying the spectral projection bounds in \cite{BSS}, we have
\begin{equation}\label{4.71}
 \sum\limits_{\tau_k\in[\la,\la+\la^{-\frac13}]}|e_{\tau_k}(x)|^2 \le  \sum\limits_{\tau_k\in[\la,\la+1]}|e_{\tau_k}(x)|^2 \le C_V\la.
\end{equation}
So \eqref{4.69} is true for $b=0$ when $n=2$.  As before, after finitely many iterations, we have:
\begin{corr}\label{err2}
Let ${\mathbb T}^n=\Rn/{\mathbb Z}^n$ denote the standard torus with the flat metric, and $\{e_{\tau_k}\}$ be the eigenfunctions of the operator $H_V$. 
Assume also that $V\in L^p(M)$, $V^{-}\in{\mathcal K}(M)$ for some $p>\frac{2n}{n+2}$ for $n\ge3$ and $V\in \mathcal{K}(M)$ for $n=2$. 
Then given \eqref{4.19}, we have for all $\la>1$
\begin{equation}\label{4.72}
\int_M \sum\limits_{\tau_k\in[\la,\la+\la^{-a}]}|e_{\tau_k}(x)|^2 dx \le 
C_V \la^{n-1-a},
\end{equation}
where $C_V$ is a constant depending on $V$, and $a=\frac{n-1}{n+1}$.
\end{corr}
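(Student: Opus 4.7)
The plan is to mimic the iteration used in Corollary~\ref{err}: start from a trivial trace bound that ensures hypothesis \eqref{4.69} with some initial exponent $b_0$, and then apply Proposition~\ref{torus12} repeatedly, at each step obtaining an improved admissible exponent $b_{m+1}$ from $b_m$, until we reach the fixed point $b=a$. Once $b=a$ is attained, the conclusion \eqref{4.72} follows (after absorbing the $\log\la$ factor into $\la^\e$ as in the proof of Corollary~\ref{err}).

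First I would establish the base case. For $n\ge 3$, the pointwise estimate \eqref{4.26} (which only uses $V\in L^1$, $V^{-}\in\mathcal{K}(M)$) gives \eqref{4.69} with $b_0=-1$. For $n=2$, the assumption $V\in\mathcal{K}(M)$ together with the spectral projection bound of \cite{BSS} gives \eqref{4.71}, i.e.\ \eqref{4.69} with $b_0=0$. These base cases are exactly what is needed to feed the recursion.

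Next I would analyze the recursion. Writing $b_{m+1}=n-1-k(b_m,p)$ (with the convention that whenever $k(b_m,p)\le n-1-a$ we set $b_{m+1}=a$), a direct computation shows
\[
k(b,p)=\tfrac{a-1}{2}-1+(n-1-b)\bigl(1-\tfrac{1}{p}\bigr)+\tfrac{n}{p},
\]
so the map $b\mapsto b_{m+1}$ is affine with slope $(p-1)/p\in(0,1)$, hence a strict contraction toward a unique fixed point $b^\ast$. The condition $p>\tfrac{2n}{n+2}$ is precisely what forces $b^\ast\ge a$, so that iteration from $b_0<a$ drives $b_m$ monotonically toward $a$ and reaches $b_m=a$ after finitely many steps (since the capped version takes the maximum with $n-1-a$). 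For $n=2$ the analogous affine recursion $b_{m+1}=\tfrac{5}{6}+\tfrac{b_m}{2}$ has fixed point $b^\ast=\tfrac{5}{3}>a=\tfrac{1}{3}$, and starting from $b_0=0$ one quickly reaches $b_m\ge a$.

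The main obstacle, as I see it, is checking that the threshold $p>\tfrac{2n}{n+2}$ really does make the contraction map's fixed point land at or above $a$, so that the inductive improvement of $b$ does not stall strictly below $a$; this is where the exponent in the hypothesis is pinned down. Once that is in place, one must also handle the logarithmic loss in \eqref{4.70}. For this I would use $\log\la\le C_\e\la^\e$ at each step and, after fixing an arbitrarily small $\e>0$, rerun the (finite) iteration with $b_m$ replaced by $b_m-\e$; this yields $b_m=a-O(\e)$ after finitely many iterations, and since Proposition~\ref{torus12} contains the floor term $C\la^{n-1-a}$ (and $C\la^{2/3}$ when $n=2$), one final application of the proposition converts this into the clean bound \eqref{4.72}, completing the proof.
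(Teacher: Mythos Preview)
Your proposal is correct and follows essentially the same iteration scheme as the paper: feed the trivial base exponent ($b_0=-1$ for $n\ge 3$, $b_0=0$ from \cite{BSS} for $n=2$) into Proposition~\ref{torus12} repeatedly, and absorb the $\log\la$ loss via $\log\la\le C_\e\la^\e$ before one final clean application. The only cosmetic difference is that you phrase the recursion as an affine contraction with fixed point $b^\ast=\tfrac{p(n+2)}{n+1}-1>a$ exactly when $p>\tfrac{2n}{n+2}$, whereas the paper instead computes an explicit positive lower bound $\mu(p)$ on the increments $b_{m+1}-b_m$; these are equivalent viewpoints on the same affine recursion.
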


\begin{proof}
For $n=2$, by using \eqref{4.71}, which is \eqref{4.69} corresponding to $b=0$, \eqref{4.72} follows from \eqref{4.70} directly  since $C_V\|V\|_{L^1(M)} \la^{\frac16-\frac b2} \log \la$ is better than the right side of \eqref{4.72}.

To prove \eqref{4.72} for $n\geq 3$, let us first ignore the $\log\la$  factor on the right side of \eqref{4.70}. Define $b_m$ to be the best exponent such that
$$
\int_M \sum\limits_{\tau_k\in[\la,\la+\la^{-a}]}|e_{\tau_k}(x)|^2 dx \le 
C_V \la^{n-1-b_m},
$$
after applying Proposition~\ref{torus12} $m$ times. We have
\begin{equation}\label{4.75'}\tag{4.75$'$}
 n-1-b_{m+1}=\max\{k(b_m,p),\,\,n-1-\frac{n-1}{n+1}\}, \,\,\,\, m=0,1,2...
\end{equation}
with, as before, $b_0=-1$.

Now by a straightforward calculation, if $b_m\leq \frac{2n-(n+2)p}{(n+1)(p-1)}+\frac{n-1}{n+1}$, we have $b_{m+1}=n-1-k(b_m,p) $. In this case, 
 $b_{m+1}-b_{m}=\frac{n+3}{2(n+1)}-\frac1p+\frac12-\frac{b_m}{p}\geq \mu(p)$, where
 $$\mu(p)=\frac{n+3}{2(n+1)}-\frac1p+\frac12-\frac{2n-(n+2)p}{(n+1)(p-1)p}-\frac{n-1}{(n+1)p}>0,\,\,\,\, \text{if}\,\,\,\, p>\frac{2n}{n+2}.
 $$
 So the sequence is strictly increasing in this case. 
If $N=[\frac{\frac{2n-(n+2)p}{(n+1)(p-1)}+\frac{n-1}{n+1}+1}{\mu(p)}]+1$, we have $b_N>\frac{2n-(n+2)p}{(n+1)(p-1)}+\frac{n-1}{n+1}$. Thus by \eqref{4.75'}, $b_m\equiv \frac{n-1}{n+1}$ for all $m>N$.

Since $\log\la\lesssim \la^\e$ for all $\e$, by the same argument, we have $b_{N+1}\geq \frac{n-1}{n+1}-\e$. However, if $\e$ is small enough,
$$\max\{k(\frac{n-1}{n+1}-\e,p),\,\,n-1-\frac{n-1}{n+1}\}=n-1-\frac{n-1}{n+1}, \,\,\,\, \text{if}\,\,\,\, p>\frac{2n}{n+2}.
$$
So we have in this case $b_m\equiv \frac{n-1}{n+1}$ for all $m>N+1$. The proof of \eqref{4.72} is complete.

\end{proof}

To obtain \eqref{4.70}, if we repeat the arguments in the proof of Proposition~\ref{torus1}, this inequality would be a consequence of 
\begin{multline}\label{4.73}
\Bigl| \, \sum_{j,k}\int_M \int_M
\frac{\tilde \chi_\la(\la_j)-\tilde \chi_\la(\tau_k)}{\la_j^2-\tau_k^2} e_j^0(x)e_j^0(y)V(y)e_{\tau_k}(x)
e_{\tau_k}(y) \, dx dy \Bigr|
\\
\lesssim  \begin{cases} \|V\|_{L^1(M)} \la^{\frac16-\frac b2} \log \la \,\,\,\, \text{if} \,\,\, n=2\\
\|V\|_{L^p(M)} \la^{k(b,p)} \log \la  \,\,\,\, \text{if} \,\,\, n\geq 3,
\end{cases}
\end{multline}
where $\, \tilde \chi_\la(\tau)$ is defined as in \eqref{4.5'} with $\e=\la^{-\frac{n-1}{n+1}}$.

And similarly, to obtain \eqref{1.15}, if we repeat the arguments in the proof of Theorem~\ref{torusthm1}, it 
suffices to show that
\begin{multline}\label{4.74}
\Bigl| \, \sum_{j,k}\int_M \int_M
\frac{\ala(\la_j)-\ala(\tau_k)}{\la_j^2-\tau_k^2} e_j^0(x)e_j^0(y)V(y)e_{\tau_k}(x)
e_{\tau_k}(y) \, dx dy\, \Bigr| 
\\
\lesssim 
 \begin{cases} \|V\|_{L^1(M)} \la^{\frac16} \log \la \,\,\,\, \text{if} \,\,\, n=2\\
\|V\|_{L^p(M)} \la^{k(p)} \log \la \,\,\,\, \text{if} \,\,\, n\geq 3,
\end{cases}
\end{multline}
where $\ala(\tau)$ is defined as in \eqref{2.15} with $\e=\la^{-\frac{n-1}{n+1}}$, and $k(p)=k(a,p)=\frac{n-1+a}{2}-1+\frac{n-1-a}{2}\cdot (2-\frac2p)+\frac n2\cdot(\frac2p-1)$,
if $a=\frac{n-1}{n+1}$. Note that by a straightforward calculation, when $p>\frac{2n}{n+2}$, the right side of \eqref{4.74} is controlled by the right side
of \eqref{1.15}.

As before, since the proofs of \eqref{4.73} and \eqref{4.74} are similar, we shall only give the details of \eqref{4.74} here. By using the same argument as in Section 3, the terms for large or small
frequencies $\tau_k$ in \eqref{4.74} will only contribute $C_V  \, 
\la^{n-2}(\log \la)^{1/2}$ to the right side. So the proof of \eqref{4.74} would be complete if we could establish the following.

\begin{proposition}\label{compprop2}
As in Theorem~\ref{torusthm}, fix $p>\frac{2n}{n+2}$ and assume that $V\in L^p(M)$, $V^{-}\in{\mathcal K}(M)$ for $n\geq 3$, and
$V\in \mathcal{K}(M)$ for $n=2$. If  $\ala(\tau)$ is defined as in \eqref{2.15} with $\e=\la^{-\frac{n-1}{n+1}}$, then
\begin{multline}\label{4.75}
\Bigl| \, \sum_{j}
\sum_{\{k: \, \tau_k\in [\lambda/2,10\lambda]\}}\int_M \int_M
\frac{\ala(\la_j)-\ala(\tau_k)}{\la_j^2-\tau_k^2} e_j^0(x)e_j^0(y)V(y)e_{\tau_k}(x)
e_{\tau_k}(y) \, dx dy\, \Bigr| 
\\
\le 
\begin{cases} C_V \la^{\frac16} \log \la \,\,\,\, \text{if} \,\,\, n=2\\
C_V \la^{k(p)} \log \la \,\,\,\, \text{if} \,\,\, n\geq 3.
\end{cases}
\end{multline}
\end{proposition}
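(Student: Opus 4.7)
The plan is to imitate the proof of Proposition~\ref{compprop1} essentially verbatim, retaining the decomposition \eqref{4.37}, the auxiliary kernels $K_{\tau,0}, K^\pm_{\tau,\ell}, K^\pm_{\tau,\infty}, R_{\tau,\ell}, R_{\tau,\infty}$, and all the $L^2(dx)$ bounds collected in Lemma~\ref{lemma4.8}.  The only substantive change is that every H\"older inequality against $V$ that was taken with exponent $2$ will now be taken with exponent $p$, and the resulting dual norm in $L^{p'}(dy)$ will be controlled by interpolating between an $L^2$ bound (coming from the argument already used for Proposition~\ref{compprop1}) and an $L^\infty$ bound (coming from the universal pointwise trace \eqref{4.26}).

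Concretely, for the main pieces $K^\pm_{\tau_k,\ell}$, my analogue of \eqref{4.47} begins with
\[
\Bigl|\sum_{\tau_k\in I^\pm_{\ell,j}} \iint K^\pm_{\tau_k,\ell}(x,y)\,e_{\tau_k}(x)e_{\tau_k}(y)\,V(y)\,dx\,dy\Bigr| \le \|V\|_{L^p}\cdot \bigl\|F^\pm_{\ell,j}\bigr\|_{L^{p'}(dy)},
\]
where $F^\pm_{\ell,j}(y)=\sum_{\tau_k\in I^\pm_{\ell,j}}e_{\tau_k}(y)\int K^\pm_{\tau_k,\ell}(x,y)e_{\tau_k}(x)\,dx$.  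I will bound $\|F^\pm_{\ell,j}\|_{L^2}$ exactly as in \eqref{4.47}, now feeding in the trace hypothesis \eqref{4.69} with $|I^\pm_{\ell,j}|=\la^{-a}2^\ell$ (so that $\#\{\tau_k\in I^\pm_{\ell,j}\cap[\la/2,10\la]\}\lesssim 2^\ell\la^{n-1-b}$), and bound $\|F^\pm_{\ell,j}\|_{L^\infty}$ by the same Cauchy--Schwarz--plus--Lemma~\ref{delta} argument but using the pointwise bound $\sum_{\tau_k\in[\la/2,10\la]}|e_{\tau_k}(y)|^2\lesssim\la^n$ from \eqref{4.26}.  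Riesz--Thorin interpolation for $p'\in[2,\infty]$ yields the $\la$-weight
\[
\tfrac{n-1+a}{2}-1+\tfrac{n-1-b}{2}\bigl(2-\tfrac{2}{p}\bigr)+\tfrac{n}{2}\bigl(\tfrac{2}{p}-1\bigr) = k(b,p),
\]
together with a $j$-factor $(1+j)^{-N}$ (summable) and a dyadic factor of $2^{\ell(1/p'-1/2)}$, which is summable absolutely for $p<2$ and produces the $\log\la$ loss when $p=2$.  The dyadic range $1\le 2^\ell\le\la\cdot\la^a/100$ contributes at most another $\log\la$, yielding the claimed bound for this piece.

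The remaining kernels $K_{\tau,0}$, $K^\pm_{\tau,\infty}$, $R_{\tau,\ell}$, $R_{\tau,\infty}$, $H^-_{\tau,\infty}$ will be handled by the same substitution; the pieces decaying off the diagonal ($R_{\tau,\infty}$ and $H^-_{\tau,\infty}$) contribute only $O(\la^{n-2}\|V\|_{L^1})$ via a standard Young-type estimate, which is absorbed into the right-hand side.  For $n=2$ under the stronger $V\in\mathcal{K}(M)$ hypothesis, I will use the BSS pointwise spectral projection bound \eqref{4.71} as the trace input (corresponding effectively to $b=0$), which is why $\|V\|_{L^1}$ is enough there and the $L^\infty$ endpoint of the interpolation carries the whole argument.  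The step I expect to be the principal technical obstacle is the exponent bookkeeping in the interpolation: combining the Lemma~\ref{lemma4.8} gain of $\la^{\frac{n-1+a}{2}-1}$, the $L^2$-side input $(2^\ell\la^{n-1-b})^{1/2}$ from the trace hypothesis, and the $L^\infty$-side input $\la^{n/2}$ from \eqref{4.26}, then checking that the resulting exponent on $\la$ agrees exactly with $k(b,p)$ across the full range $p\in(\tfrac{2n}{n+2},2]$ while keeping the dyadic $\ell$-factor summable.
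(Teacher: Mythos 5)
Your proposal is correct and follows essentially the same route as the paper: the same decomposition \eqref{4.37} and Lemma~\ref{lemma4.8}, H\"older against $V$ with exponent $p$, and then control of the resulting $L^{p'}(dy)$ norm by combining the $L^2$ trace input (Corollary~\ref{err2}, i.e.\ \eqref{4.69} with $b=a$) with the $L^\infty$ input (\eqref{4.26} for $n\ge3$, and the BSS bound \eqref{4.71} for $n=2$), which reproduces exactly the paper's exponent $k(b,p)$ and the $\log\la$ from the $\approx\log\la$ dyadic values of $\ell$. The only (harmless) difference is that you interpolate the square function over the whole interval $I^{\pm}_{\ell,j}$ directly (log-convexity of $L^{p'}$ norms, not literally Riesz--Thorin), whereas the paper interpolates on $\la^{-a}$-length subintervals and then applies Minkowski over the $\approx 2^\ell$ pieces; your version even gives a slightly better dyadic factor $2^{\ell(1/p'-1/2)}$ for $p<2$.
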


To prove Proposition~\ref{compprop2}, we shall follow the same setup as in \eqref{3.2}-\eqref{3.5}, i.e., we need to show that
\begin{multline}\label{f.1}\tag{4.38$'$}
\Bigl| \sum_{\tau_k \in [\la/2,10\la]}\iint
K_{\tau_k}(x,y)e_{\tau_k}(x)e_{\tau_k}(y) \, V(y)\, dxdy\Bigr| \\
\lesssim  
\begin{cases} \|V\|_{L^1(M)} \la^{\frac16} \log \la \,\,\,\, \text{if} \,\,\, n=2\\
\|V\|_{L^p(M)} \la^{k(p)} \log \la \,\,\,\, \text{if} \,\,\, n\geq 3.
\end{cases}
\end{multline}
where we shall write $K_{\tau}(x,y)$ as in \eqref{3.3} for $\tau=\tau_k\in [\la/2,\la]$ and \eqref{3.4}
for $\tau=\tau_k\in (\la,10\la]$ separately.

As before, we just need to prove \eqref{f.1} when $K_{\tau}(x,y)$ is replaced by $K_{\tau, \ell_0}(x,y)$, $ \sum_{\ell} K^-_{\tau, \ell}(x,y)$, and $\sum_{\ell} R_{\tau, \ell}(x,y)(1-\ala(\tau))$ 
 if $\tau=\tau_k\in [\la/2,\la]$, and when $K_{\tau}(x,y)$ is replaced by $K_{\tau, \ell_0}(x,y)$, $ \sum_{\ell} K^+_{\tau, \ell}(x,y)$, and $\sum_{\ell} R_{\tau, \ell}(x,y)\ala(\tau)$ if $\tau=\tau_k\in (\la,10\la]$,
 since as in the proof of Proposition~\ref{compprop}, the other terms only contribute terms which are $O( \|V\|_{L^1(M)}\la^{n-2})$ to the right side of \eqref{f.1}.

First, by \eqref{4.41} in Lemma~\ref{lemma4.6}  and applying Lemma~\ref{delta} with
$\delta=2^\ell$ $m(\tau, x)=K^\pm_{\tau_k,\ell} (x,y)$ and $a_k=e_{\tau_k}(y)$, we have for $n=2$, $\la^{-a}<2^\ell\le \la/100$,
\begin{align}\label{4.76}
\Bigl| &\sum_{\tau_k\in I^\pm_{\ell,j}\cap[\la/2,10\la]}
\iint K^\pm_{\tau_k,\ell}(x,y)e_{\tau_k}(x)e_{\tau_k}(y)
V(y)\, dydx\Bigr| 
\\
&\le \|V\|_{L^1}
\cdot \sup_y \Bigl\|
\sum_{\tau_k\in I^\pm_{\ell,j}\cap[\la/2,10\la]}
 K^\pm_{\tau_k,\ell}(x,y)e_{\tau_k}(x)e_{\tau_k}(y)
\Bigr\|_{L^1(dx)}
\notag
\\
&\lesssim \|V\|_{L^1} \cdot \sup_y
\Bigl( \|K^\pm_{\la+j2^\ell,\ell}(\, \cdot \, ,y)\|_{L^2(M)}+\int_{I^\pm_{\ell,j}}\bigl\|\tfrac\partial{\partial \tau}K^\pm_{s,\ell}(\, \cdot \, y)\|_{L^2(M)} \, ds
\Bigr) \notag
\\
&\qquad \qquad \qquad \qquad \qquad \times \bigl(\sum_{\tau_k\in I^\pm_{\ell,j}\cap [\la/2,10\la]}
|e_{\tau_k}(y)|^2\bigr)^{1/2}
\notag
\\
&\lesssim \|V\|_{L^1} \la^{\frac{2-1}2-1}2^{-\ell/2}(1+j)^{-N}
\bigl(\sum_{\tau_k\in I^\pm_{\ell,j}\cap [\la/2,10\la]}
|e_{\tau_k}(y)|^2\bigr)^{1/2}
\notag
\\
&\lesssim \|V\|_{L^1} \la^{-1/2}2^{-\ell/2}(1+j)^{-N}
\bigl(\sum_{\mu \in {\mathbb N}\cap (I^\pm_{\ell,j}
\cap [\la/2,10\la]} \mu^{n-1}\bigr)^{1/2}
\notag
\\
&\lesssim \|V\|_{L^1} \max\{2^{-\ell/2},1\}\cdot(1+j)^{-N}.  \notag
\\
&\lesssim \|V\|_{L^1} \la^{\frac16}(1+j)^{-N}.  \notag
\end{align}
In the second to last inequality we used \eqref{4.71} and the fact that $|I^\pm_{\ell,j}|=2^\ell> \la^{-1/3}$ when $n=2$.

Similarly, for $n\ge 3$, by applying Lemma~\ref{delta}
\begin{align}\label{4.77}
\Bigl| &\sum_{\tau_k\in I^\pm_{\ell,j}\cap[\la/2,10\la]}
\iint K^\pm_{\tau_k,\ell}(x,y)e_{\tau_k}(x)e_{\tau_k}(y)
V(y)\, dydx\Bigr| 
\\
&\le \|V\|_{L^p}
\cdot  \Bigl\|
\sum_{\tau_k\in I^\pm_{\ell,j}\cap[\la/2,10\la]}
 K^\pm_{\tau_k,\ell}(x,y)e_{\tau_k}(x)e_{\tau_k}(y)
\Bigr\|_{L^{p^\prime}\big(dy; L^1(dx)\big)}
\notag
\\
&\lesssim \|V\|_{L^p} \cdot \sup_y
\Bigl( \|K^\pm_{\la+j2^\ell,\ell}(\, \cdot \, ,y)\|_{L^2(M)}+\int_{I^\pm_{\ell,j}}\bigl\|\tfrac\partial{\partial \tau}K^\pm_{s,\ell}(\, \cdot \, y)\|_{L^2(M)} \, ds
\Bigr) \notag
\\
&\qquad \qquad \qquad \qquad \qquad \times \bigl(\int_M\big(\sum_{\tau_k\in I^\pm_{\ell,j}\cap [\la/2,10\la]}
|e_{\tau_k}(y)|^2\big)^{p^\prime/2}dy\bigr)^{1/p^{\prime}}
\notag
\\
&\lesssim  \|V\|_{L^p}\la^{\frac{n-1}2-1}2^{-\ell/2}(1+j)^{-N}
\|\big(\sum_{\tau_k\in I^\pm_{\ell,j}\cap [\la/2,10\la]}
|e_{\tau_k}(y)|^2\big)^{1/2}\|_{L^{p^\prime}(M)},
\notag
\end{align}
where $\frac1p+\frac{1}{p^{\prime}}=1$.

Now in view of Corollary~\ref{err2} and \eqref{4.26}, since $\frac{1}{p^\prime}=\frac12\cdot\frac{2}{p^\prime}+\frac1\infty\cdot(1-\frac{2}{p^\prime})$, 
by H\"{o}lder's inequality, we have for all $\la>1$ and $2\le p'\le \infty$
$$\|\big(\sum_{\tau_k [\la,\la+\la^{-a}]}
|e_{\tau_k}(y)|^2\big)^{1/2}\|_{L^{p^\prime}(M)} \lesssim \la^{\frac{n-1-a}{2}\frac{2}{p^\prime}+\frac n2(1-\frac{2}{p^\prime})}.
$$
Since the number of intervals in $ I^\pm_{\ell,j}\cap [\la/2,10\la]$ with length comparable to $\la^{-a}$ is about $2^\ell \la^a$, by Minkowski's inequality
$$\|\big(\sum_{\tau_k\in I^\pm_{\ell,j}\cap [\la/2,10\la]}
|e_{\tau_k}(y)|^2\big)^{1/2}\|_{L^{p^\prime}(M)}\lesssim 2^{\ell/2}\la^{a/2}\la^{\frac{n-1-a}{2}\frac{2}{p^\prime}+\frac n2(1-\frac{2}{p^\prime})}.
$$
So the right side of \eqref{4.77} is bounded by $\la^{k(p)}(1+j)^{-N} \|V\|_{L^p}$.

If we sum over $j=0,1,2,\dots$, we see that \eqref{4.76} and \eqref{4.77} yields that for $\la^{-a}< 2^\ell \le \la/100$
\begin{multline}\label{4.78}
\Big| \sum_{\la <\tau_k\le 10\la}
\iint K^+_{\tau_k,\ell}(x,y) e_{\tau_k}(x)e_{\tau_k}(y) 
\, V(y)\, dxdy\Bigr|
\\
+\Big| \sum_{\la/2 \le \tau_k\le\la}
\iint K^-_{\tau_k,\ell}(x,y) e_{\tau_k}(x)e_{\tau_k}(y) 
\, V(y)\, dxdy\Bigr|
\lesssim 
\begin{cases} \|V\|_{L^1(M)} \la^{\frac16}  \,\,\,\, \text{if} \,\,\, n=2\\
\|V\|_{L^p(M)} \la^{k(p)}  \,\,\,\, \text{if} \,\,\, n\geq 3.
\end{cases}.
\end{multline}
If we take $\delta=\la^{-a}$ in Lemma~\ref{delta}, this argument
also gives
\begin{multline}\label{4.79}
\Bigl| \sum_{\la/2 \le \tau_k\le \la}
\iint K_{\tau_k,0}(x,y) e_{\tau_k}(x)e_{\tau_k}(y) \, 
V(y)\, dx dy\Bigr|
\\
+ \Bigl| \sum_{\la < \tau_k\le 10\la}
\iint K_{\tau_k,0}(x,y) e_{\tau_k}(x)e_{\tau_k}(y) \, 
V(y)\, dx dy\Bigr|
\lesssim 
\begin{cases} \|V\|_{L^1(M)} \la^{\frac16}  \,\,\,\, \text{if} \,\,\, n=2\\
\|V\|_{L^p(M)} \la^{k(p)}  \,\,\,\, \text{if} \,\,\, n\geq 3.
\end{cases}.
\end{multline}

Next, since $R_{\tau,\ell}$ enjoys the bounds in \eqref{4.45}, we can repeat the arguments in 
\eqref{4.76} and \eqref{4.77} to see that for $\la^{-a}< 2^\ell \le \la/100$ we
have for $n=2$,
\begin{align*}
\Bigl| \sum_{\tau_k\in I^+_{\ell,j}\cap (\la,10\la]}
&\iint R_{\tau_k,\ell}(x,y)\ala(\tau_k)e_{\tau_k}(x)
e_{\tau_k}(y) V(y)\, dxdy\Bigr|
\\
&\lesssim \|V\|_{L^1}\cdot 2^{-\ell/2}\la^{\frac{2-1}2-1}
\sup_y\bigl(\sum_{\tau_k\in I^+_{\ell,j}\cap
(\la,10\la]} |\ala(\tau_k)e_{\tau_k}(y)|^2\bigr)^{1/2} \\
&\lesssim 
\la^{\frac16}(1+j)^{-N}\|V\|_{L^1},
\end{align*}
since $\ala(\tau_k)=O((1+j)^{-N})$ if $\tau_k
\in I^+_{\ell,j}$. 

Similarly for $n\geq 3$, we have
\begin{align*}
\Bigl| \sum_{\tau_k\in I^+_{\ell,j}\cap (\la,10\la]}
&\iint R_{\tau_k,\ell}(x,y)\ala(\tau_k)e_{\tau_k}(x)
e_{\tau_k}(y) V(y)\, dxdy\Bigr|
\\
&\lesssim \|V\|_{L^p}\cdot 2^{-\ell/2}\la^{\frac{n-1}2-1}
\|\big(\sum_{\tau_k\in I^\pm_{\ell,j}\cap [\la/2,10\la]}
|\ala(\tau_k)e_{\tau_k}(y)|^2\big)^{1/2}\|_{L^{p^\prime}(M)} \\
&\lesssim 
\la^{k(p)}(1+j)^{-N}\|V\|_{L^p} .
\end{align*}

 Summing over this bound over $j$
of course yields
\begin{multline}\label{4.80}
\Bigl|\sum_{\la<\tau_k\le 10\la}
\iint R_{\tau_k,\ell}(x,y)
\ala(\tau_k)e_{\tau_k}(x)e_{\tau_k}(y) V(y)\, dxdy\Bigr| \\
\lesssim 
\begin{cases} \|V\|_{L^1(M)} \la^{\frac16}  \,\,\,\, \text{if} \,\,\, n=2\\
\|V\|_{L^p(M)} \la^{k(p)}  \,\,\,\, \text{if} \,\,\, n\geq 3.
\end{cases}.
\end{multline}
The same argument gives
\begin{multline}\label{4.81}
\Bigl|\sum_{\la/2\le\tau_k\le \la}
\iint R_{\tau_k,\ell}(x,y)
\bigl(1-\ala(\tau_k)\bigr)e_{\tau_k}(x)e_{\tau_k}(y) V(y)\, dxdy\Bigr| \\
\lesssim  
\begin{cases} \|V\|_{L^1(M)} \la^{\frac16}  \,\,\,\, \text{if} \,\,\, n=2\\
\|V\|_{L^p(M)} \la^{k(p)}  \,\,\,\, \text{if} \,\,\, n\geq 3.
\end{cases}.
\end{multline}

We now have assembled all the ingredients for the proof
of \eqref{f.1}.  If we use \eqref{4.78}, \eqref{4.79},
\eqref{4.81}, \eqref{4.81}, \eqref{3.21} and \eqref{3.22} along with
\eqref{3.3}, we conclude that the analog of \eqref{f.1}
must be valid where the sum is taken over
$\tau_k\in [\la/2,\la]$.  The log-loss comes from the fact
that there are $\approx \log \la$ terms
$K^-_{\tau,\ell}$ and $R_{\tau,\ell}$.
We similarly obtain the analog of \eqref{f.1} where the sum is taken over $\tau_k\in (\la,10\la]$ from \eqref{3.4} along with
\eqref{4.78}, \eqref{4.79}, \eqref{3.18}, \eqref{4.80} and \eqref{3.21}. So the proof of \eqref{4.75} is complete.

\end{proof}

\bibliography{refs3}
\bibliographystyle{abbrv}
\end{document}